\documentclass[12pt]{amsart}
\usepackage{mystyle}

\usepackage[T1]{fontenc}  

\numberwithin{equation}{section}

\begin{document}

\renewcommand{\bf}{\bfseries}
\renewcommand{\sc}{\scshape}

\vspace{0.5in}

\title[Limit cubic laminations]
{Limit cubic laminations}

\author[A.~Blokh]{Alexander~Blokh}

\thanks{The first named author was partially
supported by NSF grant DMS--2349942}

\thanks{The second named author was partially
supported by NSF grant DMS--1807558}

\author[L.~Oversteegen]{Lex Oversteegen}

\author[V.~Timorin]{Vladlen~Timorin}

\thanks{The third named author was supported by
the HSE University Basic Research Program.
}

\dedicatory{Dedicated to the memory of Bob Devaney}

\subjclass[2020]{Primary: 37F10; Secondary: 37F20}

\keywords{Complex dynamics, laminations, critical portrait}

\date{July 29, 2026}

\begin{abstract}
Let $\si_3:\uc\to \uc$ be the tripling map of the unit circle.
For sequences $\{\lam_i\}$ of $\si_3$-invariant dendritic laminations we study limits $(\oc, \od)$ of their critical
portraits assuming that one such limit $\Po=(\oc_\circ, \od_\circ)$ is given.
If the endpoints of $\oc_\circ$ and $\od_\circ$ are non-periodic, then there is a unique lamination $\lam$ with finite
critical sets such that $\oc$ and $\od$ can be any couple of critical chords compatible with $\lam$.
As the extreme opposite case we consider $\Po=(\ol{0 \frac13}, \ol{0 \frac23})$ and describe the corresponding
countable closed family of possible critical portraits $(\oc, \od)$ and the distinct laminations corresponding to them.
These results can be useful for the construction of a model for the cubic connectedness locus.
\end{abstract}

\maketitle

\section{Introduction: question and answer}
\label{s:new-intro}
Polynomials of one complex variable $z$ play a special role in holomorphic 1D dynamics.
Special properties of polynomials originate from the canonical form they take for large values of $z$.
Namely, by a theorem of B\"ottcher, any complex polynomial $f$ of degree $d>1$ is holomorphically conjugate
 to $w\mapsto w^d$ on a neighborhood of infinity, which allows one to define ``polar'' coordinates $(\theta(z),\rho(z))$
 of a point $z\approx\infty$ so that $f$ takes $z$ to the point with coordinates $(d\theta(z),d\rho(z))$.
Here, $w=e^{\rho+2\pi i\theta}$ is the complex coordinate in which $f$ takes the form $w\mapsto w^d$.
The coordinates $\theta(z)\in\R/\Z$ and $\rho(z)\in\R_{\ge 0}$ are called the \emph{argument} and the \emph{potential} of $z$.
It is possible to extend $\rho$ as a positive harmonic function on the \emph{basin of infinity} $\Omega_f:=\{z\mid f^n(z)\to\infty\}$.
Define the \emph{filled Julia set $K_f$ of $f$} as the complement $\C\sm\Omega_f$.
The most interesting dynamics happens on $K_f$ (more precisely, on the boundary $J_f$ of $\Omega_f$ called
the \emph{Julia set of $f$}).
Whenever all critical points of $f$ lie in $K_f$ (equivalently, when $K_f$ is connected),
the function $\theta$ can also be extended to a continuous function on $\Omega_f$ with values in $\R/\Z$;
level curves of this function are called \emph{external rays of $f$} while level curves of $\rho$ are called \emph{equipotentials of $f$}.

\emph{Combinatorics} of $K_f$ is easy to describe if $K_f$ is not only connected but also locally connected.
In this case, for every $\ta\in\R/\Z$, the external ray $R_f(\ta)$ of argument $\ta$ \emph{lands} on some point $z\in K_f$,
that is, $\{z\}=\ol{R_f(\ta)}\sm R_f(\ta)$. %and $z$ is the only boundary point of $R_f(\ta)$ in $\C$.
Write $G'_f(w)$ for the set of all $\ta\in\R/\Z$ such that $R_f(\ta)$ lands on $w\in K_f$.
Also, identifying $\R/\Z$ with $\uc:=\{z\in\C\mid |z|=1\}$, represent $G'_f(w)$ as a subset of the circle, and
take the convex hull $G_f(w):=\ch(G'_f(w))$ of this set in the plane (in the sense of real affine geometry); call $G_f(w)$ the \emph{gap-leaf} of $w$.
Simple topological considerations yield that $G_f(w_1)\cap G_f(w_2)=\0$ for $w_1\ne w_2, w_1\in K_f, w_2\in K_f$
and $G_f(w)$ is nonempty if and only if $w$ belongs to the boundary $J_f$ of $K_f$,
the \emph{Julia set} of $f$. The family of edges of all gap-leaves of points $w\in J_f$ is called the \emph{lamination of $f$};
it completely describes $J_f$ and is viewed as a structure that adequately describes combinatorics of $K_f$.

\subsection{The question}
\label{ss:q}
Consider a cubic polynomial $P$.
Suppose that $K_P$ is connected and locally connected, and that all critical points of $P$ belong to $J_P$.
Then either $P$ has distinct critical points $c\ne d$, or $P$ has a unique multiple critical point $c=d$.
Set $C_P:=G_P(c)$, $D_P:=G_P(d)$. Clearly, either $C_P=D_P$, or $C_P$ and $D_P$ are disjoint.
Now, let $P_n$ be a sequence of cubic polynomials with $K_{P_n}$ connected and locally connected and
with critical points $c_n$, $d_n$ defined as above in the Julia set of $P_n$,
so that $C_n:=C_{P_n}$ and $D_n=D_{P_n}$ 
converge (in the sense of the Hausdorff metric) to $C$ and $D$, respectively.

Given points $\ga$ and $\ta$ in $\uc$, let $\ol{\ga\ta}$ be the chord with endpoints $\ga$ and $\ta$.
Assume  that we are given specific arguments $\al$ and $\be$ such that the two chords,
$\oc=\ol{\al\,\al+\frac 13}$ and $\od=\ol{\be\,\be+\frac 13}$, disjoint inside the open
unit disk $\disk$, are contained in $C$ and $D$, resp. We want to
describe possible sets $C$ and $D$, for given $\oc$ and $\od$.
This setup can be justified as follows.
Suppose that for some cubic polynomial $P$ with $K_P$ connected and locally connected
$R_P(\al)$ and $R_P(\al+\frac 13)$ land on the same point while
$R_P(\be)$ and $R_P(\be+\frac 13)$ also land on the same point.
Then necessarily $C=C_P$ and $D=D_P$, that is, there is only one possible choice for $C$ and $D$.
The question remains relevant even in the case when $R_P(\al)$ and $R_P(\al+\frac 13)$ or
$R_P(\be)$ and $R_P(\be+\frac 13)$ cannot land on the same point for dynamical reasons
(such as angles in question being periodic) as these cases are limits of more %adequate
 straightforward cases listed above.

The question addressed in our paper is the following:
given that $0,\frac 13\in C$ and $0,\frac 23\in D$, \emph{what can $C$ and $D$ be}?
Recall that $\R/\Z$ is identified with the unit circle, so that $0$, $\frac 13$, $\frac 23$ are
points of the unit circle with arguments $0$, $\frac{2\pi}3$, $\frac{4\pi}{3}$, respectively,
i.e., the cubic roots of unity. Observe that, as was remarked above, neither rays
$R_P(0)$, $R_P(\frac 13)$, nor rays $R_P(0)$, $R_P(\frac 23)$ can ever have the same landing point,
no matter which $P$ is taken.

\subsection{The answer}
\label{ss:ans}
We first show that either $C=\ol{0\frac 13}$ or $D=\ol{0\frac 23}$;
we consider only the former case as the latter case reduces to the former by
interchanging $C$ with $D$ while simultaneously reflecting the unit circle with respect to its horizontal diameter.
When $C=\ol{0\frac 13}$ is fixed, the set $D$ is determined by $\si_3(D):=\ch(\si_3(D\cap\uc))$;
here, $\ch(X)$ denotes the convex hull of $X$ with respect to the usual real affine structure on $\C=\R^2$
(in figures, the Poincar\'{e} metric on $\disk$ is used instead of the real affine structure to form convex hulls).
Recall that $\si_3(z)=z^3$ is the angle tripling map acting on the unit circle and not defined inside the unit disk;
if $X\subset \uc$ is closed, the set $\si_3(\ch(X))$ is \emph{not} the pointwise image of $\ch(X)$ under $z\mapsto z^3$
but the set $\ch(\si_3(X))$. Now, by our assumption
$\ol{0\frac 23}\subset D$. Hence, given $\si_3(D)$, the set $D\cap\uc$ can be recovered as the set of all
$\si_3$-preimages of the points from $\si_3(D)\cap\uc$ that lie in the bigger circle arc $[\frac 13,1]$ bounded by $\frac 13$ and $1=0$.

Let $T^0$ be the convex hull of the points $0$ and $1/3^n$ for $n=1$, $2$, $\dots$.
Define $T^1$ as the $\si_3$-pullback of $T^0$ in the upper half-disk, i.e., the set $T^1=\ch(T^1\cap\uc)$
such that $\si_3(T^1)=T^0$ and $T^1$ lies non-strictly above the horizontal diameter.
Then $T^0\cap T^1=\ol{0\frac 13}$ and $T^1$ lies non-strictly between $T^0$ and the horizontal diameter.
By induction on $k=1$, $2$, $\dots$, define $T^{k+1}$ as the $\si_3$-pullback of $T^k$ that
lies between $T^k$ and the horizontal diameter.
See Fig. \ref{fig:Tk} for an illustration of the sets $T_k$.

\begin{figure}
  \centering
  \includegraphics[width=.7\textwidth]{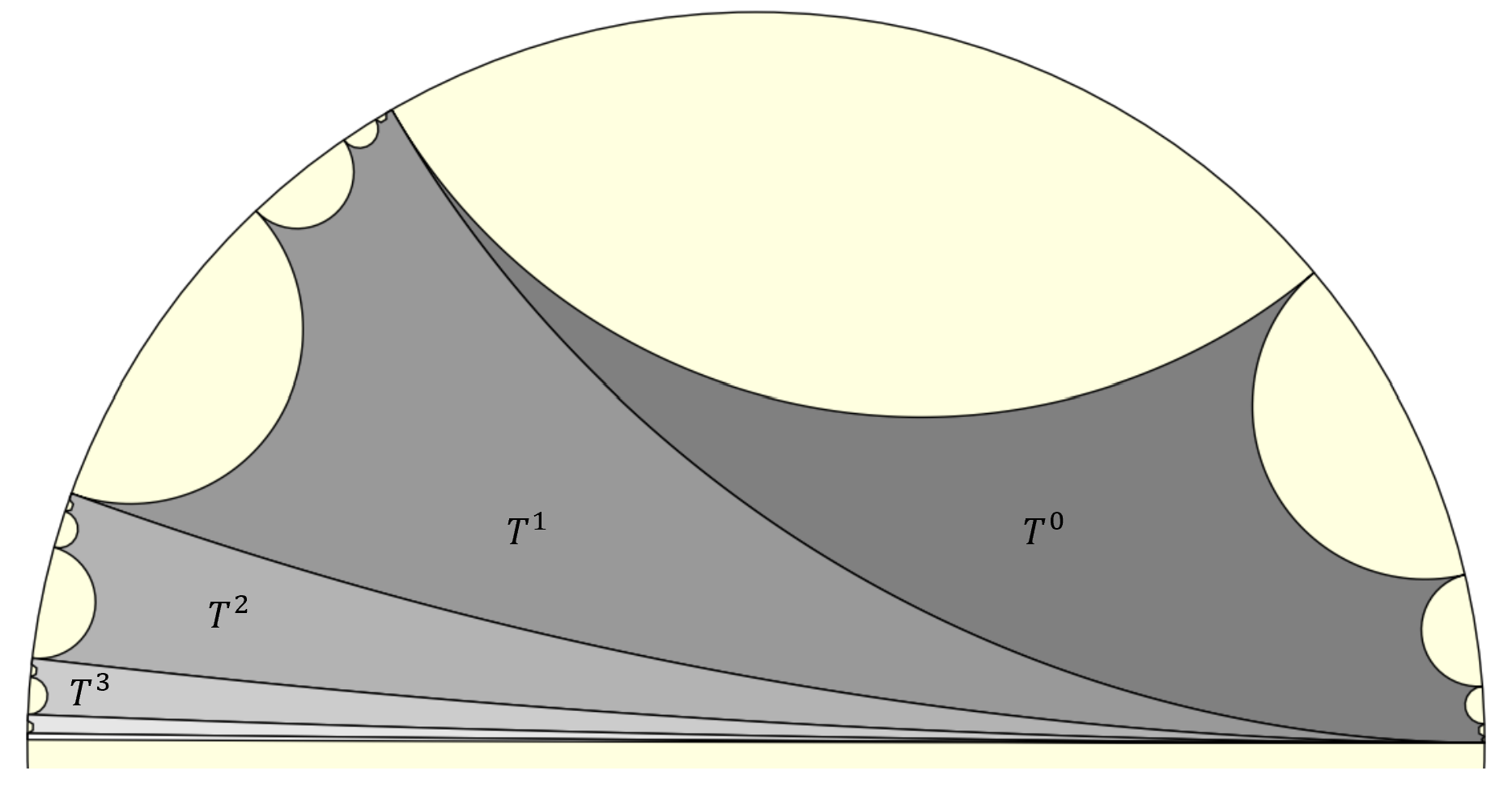}
  \caption{The sets $T^k$.}\label{fig:Tk}
\end{figure}

\begin{thm}
\label{t:main1}
Assuming that $C=\ol{0\frac 13}$ and that $D\ni 0,\frac 23$, the
 only possibilities for $\si_3(D)$ are $T^k$, for some $k=0$, $1$, $\dots$,
 the top edge of $T^{k+1}$, or the single point $\{0\}$; all these possibilities realize.
\end{thm}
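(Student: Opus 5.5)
Throughout, $\lam_n$ denotes the lamination of $P_n$; it is $\si_3$-invariant and dendritic, with critical sets $C_n$ and $D_n$. Its Hausdorff limit $\lam$ is a closed $\si_3$-invariant lamination containing $C=\ol{0\frac13}$ and a set $D\supset\ol{0\frac23}$. I write $E:=\si_3(D)$; it is a leaf or a gap of $\lam$ containing $0$. The plan has three steps: pin down $E$ using the constraints coming from $C$; identify the possible sets via the pullbacks $T^k$; and construct realizations.

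\textbf{Step 1: every vertex of $E$ lies in $T^0$.} Since $\si_3(C)=\{0\}$ and $C_n\to C$, the sets $\si_3(C_n)$ shrink to $0$. Hence the leaves of $\lam_n$ near $0$ have endpoints accumulating only on the backward orbit structure of $0$. The leaf $\ol{0\frac13}$ lies in $\lam$, and invariance forces its pullbacks to be compatible with it. Because $D_n$ is critical and $D_n\to D$, the image $\si_3(D_n)$ converges to a subset of $E$, and $\si_3(D_n)$ cannot cross any leaf of $\lam_n$. I would show that $E$ contains no point outside $\{0\}\cup\{1/3^m\}$, arguing as follows. The region of $\disk$ cut off by $C$ and containing $\frac23$ is mapped by $\si_3$ onto the whole circle in the appropriate order. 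Combined with $\ol{0\frac23}\subset D$, this forces every vertex $x\neq0$ of $E$ to satisfy the following: all forward images $\si_3^j(x)$ stay on the side dictated by $C$, unless $\si_3^j(x)=0$. The points of $\uc$ whose orbits fall onto $0$ while staying in the arc $[0,\frac13]$ are exactly $1/3^m$. So $E\cap\uc\subset T^0\cap\uc$.

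\textbf{Step 2: classification.} Let $E'=E$ or a sibling/pullback copy. If $E\subset T^0$ is a nondegenerate gap, I show that $E=T^0$. Invariance of $\lam$ makes $\si_3(E)$ again a gap or leaf with vertices in $T^0$, and the map $1/3^m\mapsto1/3^{m-1}$ shifts the vertex set. The resulting gap can be neither a proper finite subset nor a degenerate set, because of the no-wandering and no-crossing properties of limit laminations. The sets $T^k$ arise because $E$ need not contain $0$ as a vertex on the side of $T^0$ alone: $D$ may instead be the pullback of a gap that maps to $T^0$ after $k$ steps. I will therefore run the argument on $\si_3^k(E)$. Let $k$ be the first time the image sits inside $T^0$. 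By construction $T^{j+1}$ is the pullback of $T^j$ lying between $T^j$ and the horizontal diameter, and since $D$ lies on the side determined by $C$, the pullback is unique. This gives $E=T^k$. In the degenerate cases, the leaf of $E$ adjacent to the diameter becomes the top edge of $T^{k+1}$, or $E$ collapses to $\{0\}$. A compactness/ordering argument shows that nothing else is a limit, since the top edges of $T^{k}$ converge to $\{0\}$.

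\textbf{Step 3: realization.} For each listed possibility, I would construct polynomials $P_n$ (equivalently, dendritic laminations with critical leaves $C_n$, $D_n$) converging appropriately. I would choose $C_n=\ol{\al_n\,\al_n+\frac13}$ with $\al_n\to0$, picking the endpoints preperiodic so that the lamination generated by the critical portrait $(C_n,D_n)$ is dendritic (Thurston pullback from a critical portrait). For $E=T^k$, the critical chord $D_n$ is chosen so that $\si_3(D_n)$ is a finite gap whose vertices are preimages approximating the vertices of $T^k$. The main obstacle is Step 1/2: rigorously excluding vertices of $E$ outside the orbit-of-$0$ set, and excluding proper subgaps of $T^k$. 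I would attack this first through the lemma that a limit of critical portraits determines the set of leaves of the limit lamination that are disjoint from the limit critical chords, by pullback. The difficulty is exactly that $0$ is fixed, so pullbacks along $C$ accumulate.
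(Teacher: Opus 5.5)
There is a genuine gap, beginning with Step 1, whose conclusion $E\cap\uc\subset T^0\cap\uc$ is false. The theorem itself allows $\si_3(D)=T^1$, and $T^1$ has vertices $\frac49,\frac{10}{27},\dots\in(\frac13,\frac12)$, none of the form $3^{-m}$. The top edge $\ol{0\frac49}$ of $T^2$ is another allowed value with a vertex outside $T^0$.

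Step 2 tacitly abandons Step 1 by passing to $\si_3^k(E)$. But what you put in its place (``no-wandering and no-crossing properties'', ``the first time the image sits inside $T^0$'') never proves the fact everything hinges on: that $T^0$ is a gap of the limit lamination $\lam$. Without it, nothing in your argument rules out, say, $\si_3(D)=\ol{0\frac19}$. That chord is a diagonal of $T^0$ and is excluded only once $T^0$ is known to be a gap.

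The missing idea is the passivity argument of Lemma \ref{l:T0}. For large $n$ the chords $\oc_n$ and $\od_n$ cannot both cross the horizontal diameter, so one sequence (in the relevant case $\oc_n$) stays above it. Then $\si_3$ on the region above $\oc_n$ is a degree-one weakly polynomial-like map with no fixed points on its boundary arc. This produces a finite invariant rotational gap or leaf of $\lam_n$ there. The limit of these is an invariant degree-one gap with vertex $0$ lying above $\ol{0\frac13}$, which can only be $T^0$.

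Once $T^0$ is a gap, the classification is an induction you do not actually carry out. Let $k$ be maximal with $T^0,\dots,T^{k-1}$ gaps of $\lam$.
\begin{itemize}
\item If no finite $k$ exists, then $\ol{0\frac12}\in\lam$. This periodic leaf is rigid (Lemma \ref{l:rigid}), so it lies in every $\lam_n$. Both chord sequences are then passive, and $D=\ol{0\frac23}$, i.e.\ $\si_3(D)=\{0\}$.
\item Otherwise the pullback of $T^{k-1}$ is either two-to-one, giving $D=H^k$ and $\si_3(D)=T^{k-1}$.
\item Or it splits into two one-to-one pullbacks with a critical quadrilateral between them. Compatibility with $\ol{0\frac23}$, together with the fact that $T^k$ is not a gap, forces this quadrilateral to be $Q^k$. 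Then $\si_3(D)$ is the top edge of $T^{k-1}$.
\end{itemize}
Your uniqueness claim (``the pullback is unique'') hides this last case.

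Your closing compactness remark is also wrong. The top edges of $T^k$ converge to the diameter $\ol{0\frac12}$, not to $\{0\}$; the option $\{0\}$ comes from the rigidity argument above.

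Finally, Step 3 is not yet a construction. The paper realizes each case with dendritic postcritically finite laminations built from:
\begin{itemize}
\item the invariant rotational $N$-gon $G_N$ above the diameter, with rotation number $\frac1N$, so that $G_N\to T^0$;
\item its pullbacks $T^k_N$;
\item the two-to-one pullback $H^k_N$ of $T^{k-1}_N$, or a quadrilateral $Q^k_N$;
\item the preperiodic critical chord $\ol{y_N\,(y_N+\frac13)}$ with $y_N=\frac{1}{2\cdot 3^N}$.
\end{itemize}
Approximating $T^0$ by rotational gaps whose rotation numbers tend to $0$ is the idea your sketch lacks there.
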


A direct description of all possibilities for $D$, rather than for $\si_3(D)$, is as follows.
Consider the convex hull $H^k$ of the full $\si_3|_{[\frac 13,1]}$-preimage of $T^{k-1}\cap\uc$.
This convex set contains $T^k$.
Also, consider the convex hull $Q^k$ of the bottom edge $E$ of $T^{k-1}$ and
the only other edge of $H^k$ that has the same $\si_3$-image as $E$;
the set $Q^k$ thus defined is a quadrilateral.

\begin{cor}
  \label{c:main1}
Under the assumptions of Theorem \ref{t:main1}, the set $D$
can only be $H^k$, or $Q^k$, for some $k>0$, or $\ol{0\frac 23}$.
All these possibilities realize.
\end{cor}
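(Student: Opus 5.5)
We derive Corollary \ref{c:main1} from Theorem \ref{t:main1} by pulling back each possible value of $\si_3(D)$. The recovery principle from Subsection \ref{ss:ans} does the work: since $\ol{0\frac 23}\subset D$ and $D$ is disjoint from $C=\ol{0\frac13}$ in $\disk$ (apart from the endpoint $0$), we have
\[
D\cap\uc=\si_3^{-1}\bigl(\si_3(D)\cap\uc\bigr)\cap[\tfrac13,1], \qquad D=\ch(D\cap\uc).
\]
So the plan is to go through the list of Theorem \ref{t:main1} case by case and compute this preimage.

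The first two cases are direct.
\begin{itemize}
\item If $\si_3(D)=\{0\}$, the preimages of $0$ in $[\frac13,1]$ are $\frac13,\frac23,1$. Here I must check that $\frac13$ is excluded. It is the common endpoint with $C$, and the recovery statement counts $\frac 13$ as belonging to $C$. Also, $D\ne C$, so $D$ is disjoint from $C$ or else equal to it. Hence $D=\ol{0\frac23}$.
\item If $\si_3(D)=T^{k-1}$ for some $k\ge1$, then by definition $D\cap\uc$ is the full $\si_3|_{[\frac13,1]}$-preimage of $T^{k-1}\cap\uc$. Therefore $D=H^k$.
\end{itemize}

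The remaining case is $\si_3(D)$ equal to the top edge of $T^{k}$, with $k\ge1$. I will reindex it so that it matches $Q^k$. The key step is a lemma on the top edges. I claim that the top edge of $T^{k}$ has the same endpoints as the bottom edge $E$ of $T^{k-1}$, and hence it is $E$ itself. Alternatively, the lemma would show that the two edges are related in a way that makes their full preimages agree.

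I would prove this lemma by induction using the geometry of the pullbacks.
\begin{itemize}
\item Base case. $T^0\cap T^1=\ol{0\frac13}$, which is the bottom edge of $T^0$ and the top edge of $T^1$.
\item Inductive step. $T^{k+1}$ is the pullback of $T^k$ lying between $T^k$ and the diameter. So $T^{k+1}$ shares with $T^k$ the edge that is the pullback of the common edge of $T^{k-1}$ and $T^k$.
\end{itemize}
Consequently, the top edge of $T^{k+1}$ is the bottom edge of $T^k$. With this identification, $\si_3(D)=E$, where $E$ is the bottom edge of $T^{k}$. Its two endpoints have preimages in $[\frac13,1]$ consisting of four points. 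These four points form two chords with image $E$: one is the edge of $H^{k+1}$ that is the appropriate lift of $E$, and the other is its sibling. Their convex hull is exactly the quadrilateral $Q^{k+1}$. I need to check that the definition of $Q$ uses "bottom edge $E$ of $T^{k}$ and the edge of $H^{k+1}$" with consistent indexing, since an off-by-one here just shifts $k$.

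Realization follows from the realization part of Theorem \ref{t:main1}. The map $D\mapsto\si_3(D)$ is injective under our constraints, so each listed $D$ arises.

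The main obstacle is the pullback-counting for the top-edge case. I must show that the preimage of an edge in the arc $[\frac13,1]$ has exactly four points and that none of them coincides with $\frac13$. I also need to verify that the convex hull is a genuine quadrilateral rather than degenerate. For this I would first check that the endpoints of $E$ are distinct from $0$ and have preimages in the interior of the arc, using the explicit description of $T^0$ with vertices $1/3^n$.
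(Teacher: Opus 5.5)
There is a genuine gap, at two points in the translation from $\si_3(D)$ back to $D$.

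\textbf{Deciding whether $\frac13\in D$.} Your reduction breaks exactly here, and your patch for it is false. Since $0\in D$, every possible $\si_3(D)$ contains $0$, so your displayed formula always puts $\frac13$ into $D$. For $\si_3(D)=\{0\}$ it returns the triangle $\ch(\{0,\frac13,\frac23\})$, not $\ol{0\frac23}$. In the edge case it returns a pentagon. Every $T^k$ has $0$ as a vertex, and its bottom edge is $\ol{0\,e_k}$ with $e_0=\frac13$ and $e_{k+1}=\frac{e_k+1}3$. So the preimages of its endpoints in $[\frac13,1]$ are the five points $\frac13,\frac{e_k+1}3,\frac23,\frac{e_k+2}3,1$, and your planned check that the endpoints of $E$ avoid $0$ cannot succeed.

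The rule ``$D$ is disjoint from $C$ or equal to it'' holds for the critical sets of a single polynomial, not for Hausdorff limits. Indeed $H^1$, one of the answers, has $C=\ol{0\frac13}$ as an edge: it is the common edge of $T^0$ and $H^1$ in $\lam^1$.

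More fundamentally, the conclusion of Theorem \ref{t:main1} cannot settle this. The sets $\ol{0\frac23}$ and $\ch(\{0,\frac13,\frac23\})$ both contain $0,\frac23$, do not cross $C$, and have image $\{0\}$. The same holds for $Q$ versus the pentagon. Excluding the larger set therefore needs dynamical information about $\lam$, not pullback bookkeeping.

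The paper never passes through $\si_3(D)$. If $D$ is a leaf, it is $\ol{0\frac23}$ trivially. If $D$ is a gap, Proposition \ref{p:nec} uses Lemma \ref{l:T0} to make the $\oc_n$ passive. Proposition \ref{p:limlam} then takes the largest $k$ with $T^0,\dots,T^{k-1}$ gaps of $\lam$; if all $T^i$ are gaps, then $\ol{0\frac12}\in\lam$ and Lemma \ref{l:12} forces $D=\ol{0\frac23}$. It then splits into two cases. Either $T^{k-1}$ has a two-to-one pullback, and $D=H^k$. Or it has two one-to-one pullbacks with a critical quadrilateral between them; this quadrilateral is pinned down as $Q^k$ because $\ol{0\frac23}$ must be a diagonal and $T^k$ must not be compatible with it. Realization comes from Proposition \ref{p:limlam1}.

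\textbf{Indexing in the edge case.} Your indexing is off, and not harmlessly. The bottom edge of $T^{m-1}$ maps to the bottom edge of $T^{m-2}$. Hence $\si_3(Q^m)$ is the bottom edge of $T^{m-2}$, which is the top edge of $T^{m-1}$. So if $\si_3(D)$ is the top edge of $T^{k+1}$, i.e.\ the bottom edge $\ol{0\,e_k}$ of $T^k$, then $D=Q^{k+2}$, not $Q^{k+1}$. The two chords of $D$ you have in mind are $\ol{0\,e_{k+1}}$ and its sibling $\ol{\frac23\,\frac{e_k+2}3}$. These are edges of $H^{k+2}$ but only diagonals of $H^{k+1}$. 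For example, for $k=0$ one gets $D=\ch(\{0,\frac49,\frac23,\frac79\})=Q^2$.

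Consequently the list in Theorem \ref{t:main1} produces $Q^k$ only for $k\ge 2$, and the case $k=1$ is not reached by your argument at all. There the recipe for $Q^1$ degenerates: the bottom edge of $T^0$ is the critical chord $\ol{0\frac13}$, its image is a point, and no other edge of $H^1$ collapses.
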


An alternative description of the sets $H^k$ and $Q^k$ together with figures illustrating the picture can be found at the end
of the Introduction.

\subsection{Critical portraits}
Theorem \ref{t:main1} implies a description of limit critical portraits.
A set of pairwise disjoint (except possibly for common endpoints) chords $\ol{\al_1\be_1}$, $\dots$, $\ol{\al_{d-1}\be_{d-1}}$ of $\disk$,
where $\be_i-\al_i\in \frac 1d\Z$, is called a degree $d$ \emph{critical portrait} if the union of these chords does not separate the plane.
There are several cases when a critical portrait can be associated to a polynomial in a natural fashion.

First, assume that a polynomial $f$ has connected and locally connected filled Julia set $K_f$.
Then $\ol{\al_1\be_1}$, $\dots$, $\ol{\al_{d-1}\be_{d-1}}$ is a \emph{critical portrait of a polynomial} $f$
if $R_f(\al_i)$ and $R_f(\be_i)$ land on the same point, for $i=1$, $\dots$, $d-1$
(these landing points are then necessarily critical points of $f$).
This imposes significant restrictions on $f$,
namely, not only $K_f$ must be connected and locally connected but also all critical points of
$f$ lie in $J_f$. Such polynomials form a subset of the set of
\emph{Siegel-dendritic}, or \emph{Side} polynomials \cite{bot26} defined as polynomials that do not have hyperbolic or parabolic
Fatou domains. In the end we obtain a (finite) family of critical portraits associated with a polynomial $f$ where $f$ is a Side polynomial with
connected locally connected Julia set.

A related class of polynomials is that of \emph{dendritic} polynomials, i.e. polynomials with only repelling periodic points.
Dendritic polynomials are Side. On the other hand, unlike polynomials considered in the previous paragraph, dendritic polynomials with
connected Julia set do not necessarily have locally connected Julia sets. However by Kiwi \cite{kiw04} (see also \cite{bco11}) \emph{every dendritic polynomial
with connected Julia set can be naturally associated to critical portraits}. To explain that one needs to use the concept of the
\emph{impression of the external ray}, an important concept from continuum theory \cite{ill25}. Let a dendritic polynomial $P$ of degree $d$
have a connected Julia set $J_P$. Declare two angles $\al, \be$ equivalent if the impressions of their external rays are non-disjoint
and extend this equivalence by transitivity.

By \cite{kiw04} and \cite{bco11} the resulting equivalence relation on the circle is closed, with
finite classes of equivalence.
Moreover, consider convex hulls of these classes of equivalence and call them \emph{gap-leaves}.
The family of edges of all gap-leaves of points $w\in J_f$ is then called the \emph{lamination of $f$}; similar to the above in the locally connected
case it is viewed as a structure adequately describing combinatorics (but not topology!) of $K_f$.
For every gap-leaf the union of the corresponding
impressions is a (possibly degenerate) continuum nowhere dense in $\C$, and the entire $J_f$ is then partitioned into such continua
called \emph{(generalized) impressions (of gap-leaves of $f$)}.

Now, take the generalized impressions of gap-leaves that contain critical points. There are finitely many of them.
Call the corresponding gap-leaves \emph{critical}. Any critical portrait whose chords are contained in the critical
gap-leaves of a dendritic polynomial $f$ can be associated with $f$. Thus, in this case, too, a finite collection of
critical portraits is associated with a dendritic polynomial $f$, and in this case the requirement of having a locally connected Julia set
is not needed.

The situation with dendritic polynomials is discussed in more detail later in Section \ref{s:mot}.

Suppose now that $P_n$ is a sequence of Side cubic polynomials with critical portraits converging to $\{\ol{0\frac 13},\ol{0\frac 23}\}$
(that is, $P_n$ are polynomials that can be associated to critical portraits as described above).
If we choose another sequence of critical portraits
in the critical sets $C_n$, $D_n$, what can these critical portraits converge to?
This is a version of our main question.

More notation is needed to state the answer.
Let $\Delta'$ be the full $\si_3|_{[\frac 13,1]}$-preimage of $\ol{(T^0\cup T^1\cup\dots)}\cap\uc$,
and set $\Delta=\ch(\Delta')$.
Also, let $\tau$ be the reflection of $\uc$ with respect to the horizontal diameter,
that is, $\tau(\al)=-\al$ in terms of the angular coordinate $\al\in\R/\Z$.

\begin{cor}
\label{c:main-po}
Let $P_n$ be a sequence of Side cubic polynomials with whom critical portraits $\{\oc_n,\od_n\}$ can be associated. %with $K(P_n)$ connected and locally connected.
If $\{\oc_n,\od_n\}$ is a critical portrait of $P_n$, and if some critical portraits of $P_n$ converge to $\{\ol{0\frac 13},\ol{0\frac 23}\}$,
 then the only possible limits of $(\oc_n,\od_n)$ are cubic critical portraits contained in $\Delta$ or in $\tau(\Delta)$.
\end{cor}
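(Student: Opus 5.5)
Let $C_n$, $D_n$ be the critical gap-leaves of $P_n$ containing $\oc_n$, $\od_n$. We may assume $\oc_n\subset C_n$ and $\od_n\subset D_n$, and that along a subsequence some critical portraits $\{\oc'_n,\od'_n\}$ of $P_n$, with $\oc'_n\subset C_n$ and $\od'_n\subset D_n$, converge to $\{\ol{0\frac13},\ol{0\frac23}\}$. The case where both portraits sit in the same critical set $C_n=D_n$ is handled by the same argument.

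Pass to a further subsequence so that $C_n\to C$ and $D_n\to D$ in the Hausdorff metric, and so that $\oc_n\to\oc$ and $\od_n\to\od$. Then $\ol{0\frac13}\subset C$, $\ol{0\frac23}\subset D$, $\oc\subset C$ and $\od\subset D$. This is exactly the setup of Subsection \ref{ss:q}, so the dichotomy stated at the start of Subsection \ref{ss:ans} applies: either $C=\ol{0\frac13}$ or $D=\ol{0\frac23}$. The second case is the image of the first under $\tau$ combined with interchanging $C$ and $D$. Since $\tau$ conjugates $\si_3$ to itself and maps critical portraits to critical portraits, it suffices to treat $C=\ol{0\frac13}$ and to show that $\{\oc,\od\}\subset\Delta$.

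In that case $\oc$ is a critical chord inside $\ol{0\frac13}$, so $\oc=\ol{0\frac13}$. Since $0,\frac13\in\Delta'$ (as $0\in T^0$ and $\frac13$ is the other preimage of $0$ in $[\frac13,1]$), we get $\oc\subset\Delta$. For $\od$, Corollary \ref{c:main1} gives $D\in\{H^k,Q^k,\ol{0\frac23}\}$. Each of these sets has vertices in the $\si_3|_{[\frac13,1]}$-preimage of $T^{k-1}\cap\uc$ (or of $\{0\}$), which lies in $\Delta'$. Hence $D\subset\Delta$, and so $\od\subset D\subset\Delta$.

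Finally, $\{\oc,\od\}$ is a cubic critical portrait. The chords are limits of critical chords, so their endpoints differ by $\frac13$ (non-degeneracy is preserved in the limit). They are limits of chords that are disjoint except at endpoints, so they still meet at most at endpoints. A union of two chords cannot separate the plane, so the remaining condition holds automatically.

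The main obstacle is the one genuinely nontrivial input: that Hausdorff limits of critical sets of Side polynomials satisfy the hypotheses of Theorem \ref{t:main1} and Corollary \ref{c:main1}. This includes the degenerate case in which the critical sets coincide, $C_n=D_n$, and in the limit $\ol{0\frac13}$ and $\ol{0\frac23}$ both lie in one set. I would rule that case out, or reduce it to the dichotomy, by showing that a single limit gap containing both chords would contain the triangle $\ch\{0,\frac13,\frac23\}$. This is impossible because that triangle's image under $\si_3$ is the single point $0$, which forces a degenerate (degree-$3$) critical set. Such a set is compatible only with $C=D$ equal to that triangle, and the triangle is contained in $\Delta$ anyway since $\frac23\in\Delta'$.
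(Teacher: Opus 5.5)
Your treatment of the case $C_n\ne D_n$ follows the paper's route. After the $\tau$-symmetry one has $C=\ol{0\frac13}$, and the list $D\in\{H^k,Q^k,\ol{0\frac23}\}$ from Corollary~\ref{c:main1} (that is, Proposition~\ref{p:nec}) puts both limit chords in $\Delta$.

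The gap is the unicritical case $C_n=D_n$, which the statement does not exclude.
\begin{itemize}
\item Your first paragraph says this case is ``handled by the same argument'', but it is not. There $C=D\supseteq\rhd$, so neither alternative of the dichotomy holds and Corollary~\ref{c:main1} does not apply. The paper proves the dichotomy only after setting this case aside via Lemma~\ref{l:uni3}.
\item Your last paragraph calls the situation impossible, which is false. Unicritical Side laminations with critical triangles $\ch(\{a_n,a_n+\frac13,a_n+\frac23\})$ and $a_n\to 0$ realize it, and Lemma~\ref{l:uni3} shows that $\rhd$ is then a gap of $\lam$.
\item What you actually need is that the limit critical set is exactly $\rhd$, and this is only asserted. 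The remark $\si_3(\rhd)=\{0\}$ does not give it.
\end{itemize}
The claim is not automatic. A priori $C$ could be a larger rotation-invariant polygon such as $\ch(\{0,b,\frac13,b+\frac13,\frac23,b+\frac23\})$ with $b\in(0,\frac13)$. Then portraits formed by two edges of $\ch(\{b_n,b_n+\frac13,b_n+\frac23\})$, with $b_n\to b$, would converge to $\{\ol{b\,b+\frac13},\ol{b\,b+\frac23}\}$. That portrait lies neither in $\Delta$ (since $b\notin[\frac13,1]$) nor in $\tau(\Delta)$ (since $b+\frac23\notin[0,\frac23]$).

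Excluding this is exactly the content of Lemma~\ref{l:lam-uni}. Suppose $\rhd\subsetneq C$; then no edge of $\rhd$ is a leaf.
\begin{itemize}
\item If there is a leaf $\ol{0x}$ with $0<x<\frac13$, iterate it, using that no leaf crosses $\ol{\frac13\frac23}\subset C$, until $x\in(\frac29,\frac13)$. Then $\ol{0\,3x}$ crosses the rotated leaf $\ol{\frac23\,\frac23+x}$, a contradiction.
\item If there is no leaf at $0$, then $C$ is an invariant gap of degree $3$, hence all of $\ol\disk$, contradicting that $\lam$ is nonempty.
\end{itemize}
Once $\rhd$ is a gap of $\lam$, its edges are limits of leaves of $\lam_n$, and these cannot cut through the interior of $C_n$. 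Hence $C=\rhd$, and both limit chords are edges of $\rhd\subset\Delta$.

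A smaller point: limits of disjoint chords can coincide. So your claim that $\oc$ and $\od$ meet at most at endpoints needs one more observation: two distinct critical chords close to each other always cross, so they cannot both converge to the same chord.
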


See Fig. \ref{fig:Del1} for an illustration of Corollary \ref{c:main1}.

\begin{figure}
  \centering
  \includegraphics[width=.6\textwidth]{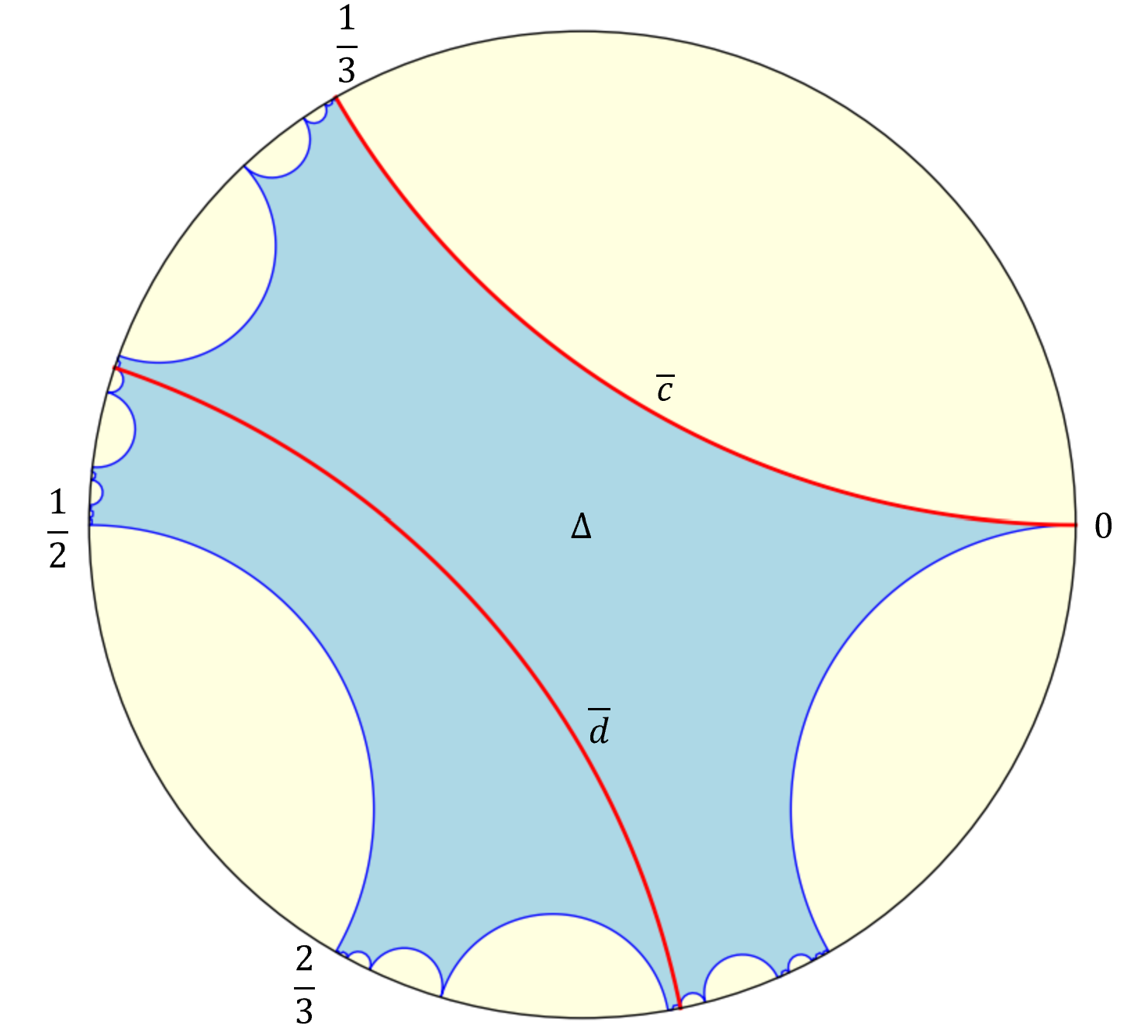}
  \caption{All possibilities for $\{\lim\oc_n,\lim\od_n\}$ are critical portraits contained in $\Delta$ or in $\tau(\Delta)$.
  One such portrait $\{\oc,\od\}$ is shown in the figure.
  }\label{fig:Del1}
\end{figure}

\subsection{Laminations}
\label{ss:intro-lam}
Consider again a polynomial $f$ with $K_f$ connected and locally connected.
Among the gap-leaves of all points of $J_f$, the ones associated with the critical points of $f$ are the most important.
This is in line with a general observation from complex dynamics that the dynamical
behavior of the critical points is largely responsible for the dynamics of all other orbits.
Still, it makes sense to consider all gap-leaves at once; they form structures called \emph{invariant laminations}.
Define the equivalence relation $\sim_f$ on $\uc$ so that classes of $\sim_f$ are precisely $G_f(z)\cap \uc$, for all $z\in J_f$
(the gap-leaf $G_f(z)$ of $z$ was defined in the beginning of the Introduction, before Section \ref{ss:q}).
It can be shown that $f:J_f\to J_f$ is then topologically conjugate to the map induced on $J_{\sim f}:=\uc/\sim_f$ by $\si_d:z\mapsto z^d$.
Form the collection $\lam_f=\lam_{\sim_f}$ of all edges of all $G_f(z)$; this is the \emph{lamination of $f$}.
Elements of $\lam_f$ are called \emph{leaves} while the closures of the complementary components
of the union of all leaves are called \emph{gaps}.

A general concept of \emph{$\si_d$-invariant geodesic laminations} was developed by Thurston in \cite{thu85}.
These are collections of chords that have similar properties to those of $\lam_f$, for $K_f$ connected and locally connected.
One significant advantage of this more general approach is that limits of invariant laminations are also invariant laminations.
However, if $f_n$ is a sequence of polynomials %and $\lam_n=\lam_{f_n}$ are the corresponding laminations,
with connected and locally connected $J_{f_n}$,
then the limit $\lam$ of $\lam_{f_n}$, even if it exists, may not have the form $\lam_f$, for a polynomial $f$.
For example, it may happen that $f_n\to f$ but $K_f$ is not locally connected,
or $K_f$ is locally connected but $\lam_f$ fail to coincide with $\lam$
(the latter may happen when $f$ contains \emph{parabolic periodic points}, i.e., points $z$ with $f^p(z)=z$ and $(f^p)'(z)=1$ for some $p>0$).

Consider again a sequence of cubic polynomials $P_n$ with $K_{P_n}$ connected and locally connected
such that some (suitably chosen) critical portraits of $P_n$ converge to $\{\ol{0\frac 13},\ol{0\frac 23}\}$.
Yet another version of our question is \emph{what can the limit of $\lam_{P_n}$ be}.
An explicit answer to this question is as follows.
Fix some $k=1$, $2$, $\dots$, and consider the convex sets $T^0$, $\dots$, $T^{k-1}$.
Instead of $T^k$, take the convex hull $H^k$ of the full $\si_3|_{[\frac 13,1]}$-preimage of $T^k\cap\uc$.
Observe that the union $(T^0\cup\dots \cup T^{k-1}\cup H^k)\cap \uc$ is a forward $\si_3$-invariant countable
set all of whose complementary components are circle arcs of length $<\frac 13$.
There is therefore a unique $\si_3$-invariant lamination $\lam^k$ that contains $T^0$, $\dots$, $T^{k-1}$, and $H^k$ as gaps.
In this way, we defined a sequence of laminations $\lam^k$.
See Fig. \ref{fig:lams} for an illustration of $\lam^1$ and $\lam^2$.

\begin{figure}
  \centering
  \includegraphics[width=.9\textwidth]{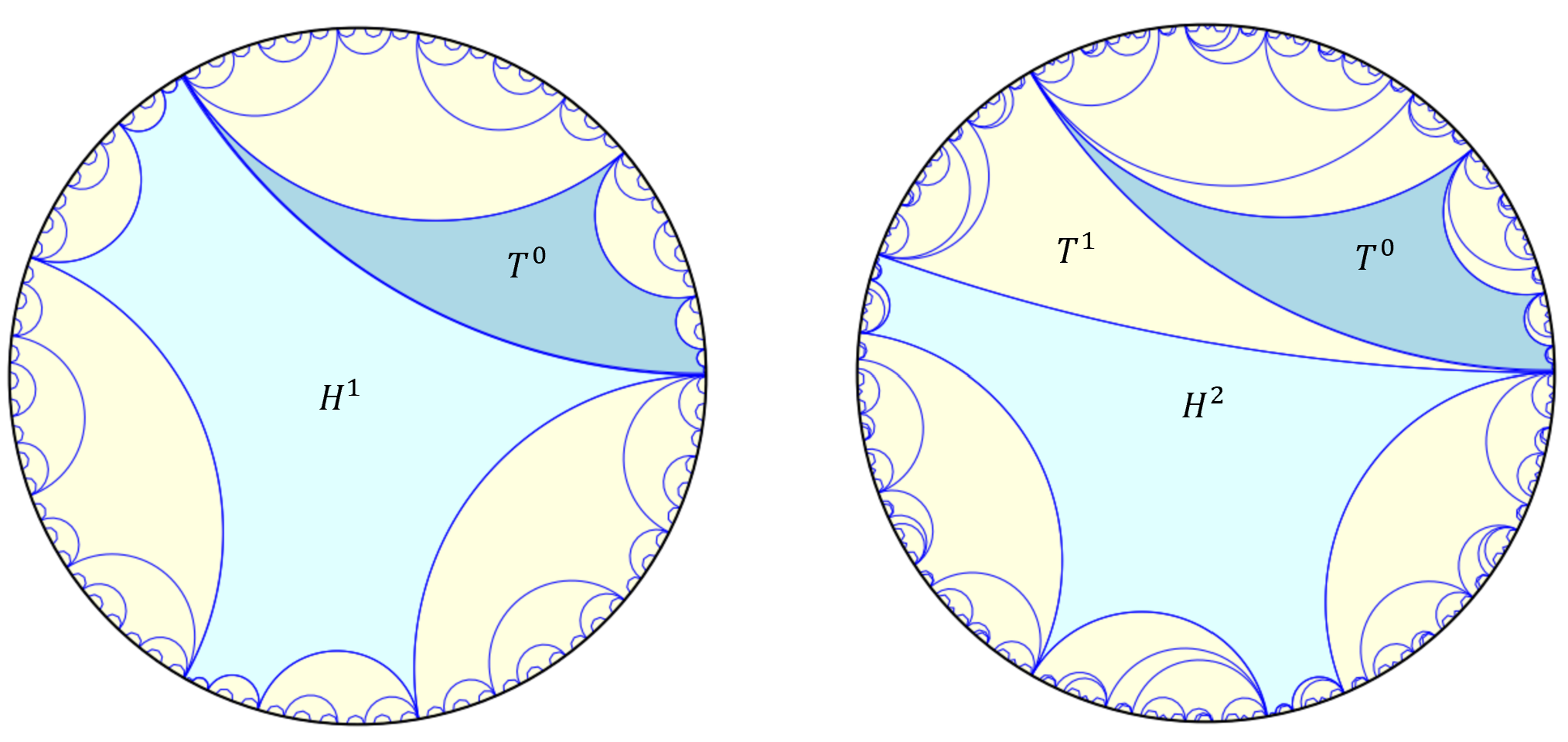}
  \caption{Left: $\lam^1$. Right: $\lam^2$.}\label{fig:lams}
\end{figure}

For $k>1$, one can modify $\lam^k$ as follows.
Insert a critical quadrilateral $Q^{k}$ into $H^{k}$
so that $Q^{k}$ shares an edge with $T^{k-1}$;
there is a unique choice of $Q^{k}$ with this property.
Accordingly, insert eventual pullbacks of $Q^{k}$ into eventual pullbacks of $H^{k}$.
Denote the thus obtained lamination by $\lam^k_\square$. See Figure \ref{fig:q2} for an illustration of $\lam^2_\square$.

\begin{figure}
  \centering
  \includegraphics[width=.9\textwidth]{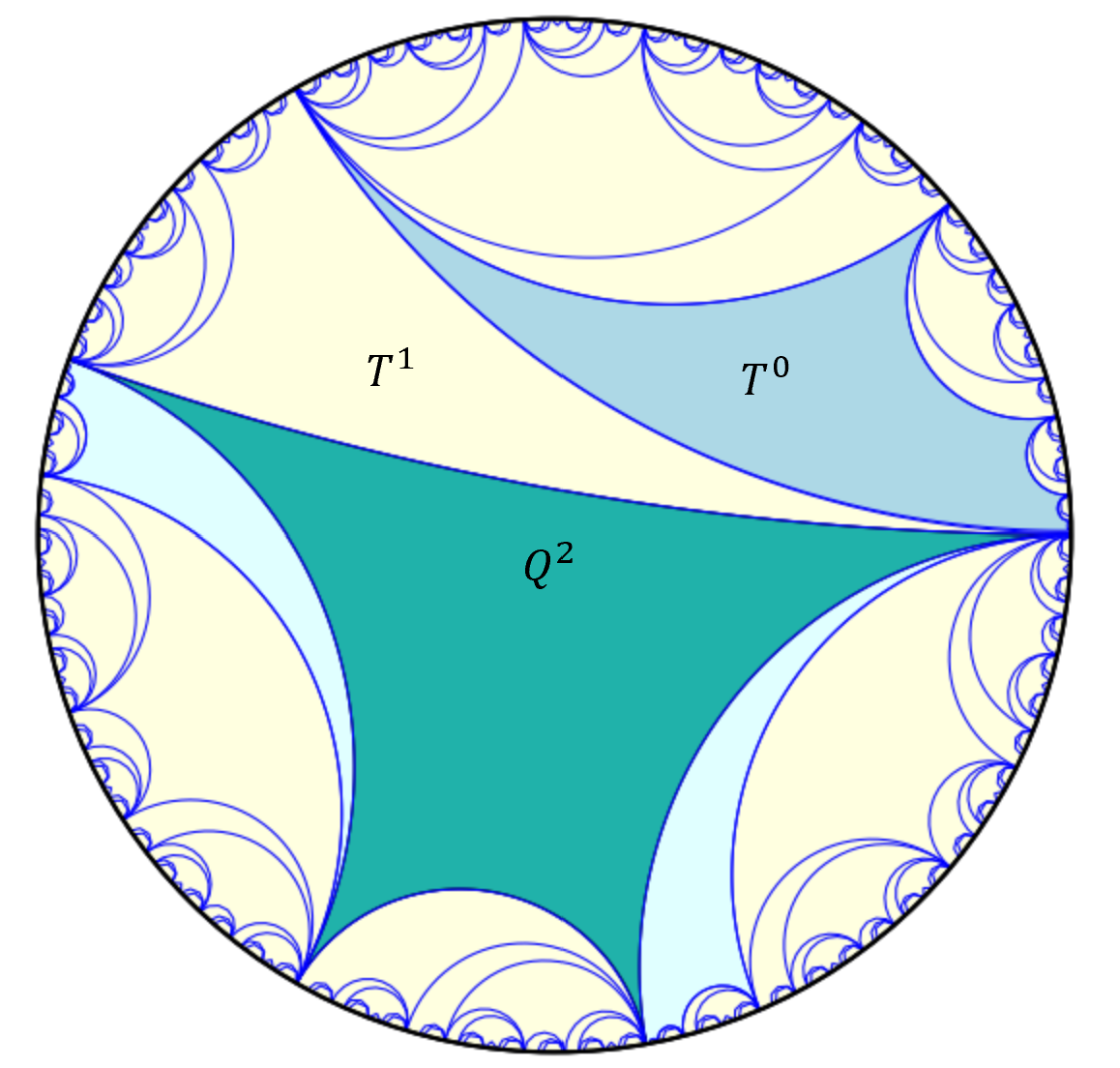}
  \caption{$\lam^2_\square$.}\label{fig:q2}
\end{figure}

\begin{thm}
  \label{t:main-lam}
Suppose that some critical portraits of Side cubic polynomials $P_n$
converge to $\{\ol{0\frac 13},\ol{0\frac 23}\}$ as $n\to\infty$.
If the laminations $\lam_{P_n}$ also converge, then the limit lamination $\lam$ coincides with $\lam^k$, $\lam^k_\square$,
$\tau(\lam^k)$ or $\tau(\lam^k_\square)$, for some $k$, unless $\ol{0\frac 13}$, $\ol{0\frac 23}\in\lam$.
All these options realize.
\end{thm}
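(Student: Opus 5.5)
The plan is to derive Theorem \ref{t:main-lam} from Theorem \ref{t:main1} and Corollary \ref{c:main1}, together with the standard fact that Hausdorff limits of $\si_3$-invariant laminations are $\si_3$-invariant laminations.

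Let $\lam=\lim\lam_{P_n}$. First I will pass to a subsequence so that the critical gap-leaves $C_n\supset\oc_n$ and $D_n\supset\od_n$ of $P_n$ converge to sets $C\supset\ol{0\frac13}$ and $D\supset\ol{0\frac23}$. Every edge of $C_n$ and $D_n$ is a leaf of $\lam_{P_n}$, so the limit edges of $C$ and $D$ are leaves of $\lam$. The set $C\cap\uc$ (resp.\ $D\cap\uc$) is then contained in a gap or leaf of $\lam$ containing $0,\frac13$ (resp.\ $0,\frac23$).

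Next I apply the dichotomy from Section \ref{ss:ans}: either $C=\ol{0\frac13}$ or $D=\ol{0\frac23}$. Applying $\tau$ if needed, I assume $C=\ol{0\frac13}$. If also $D=\ol{0\frac23}$, then both chords are leaves of $\lam$, which is the excluded case. Otherwise Corollary \ref{c:main1} gives $D=H^k$ or $D=Q^k$ for some $k\ge1$.

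The core step is to show that $\lam$ is determined by $D$, namely $\lam=\lam^k$ if $D=H^k$ and $\lam=\lam^k_\square$ if $D=Q^k$. I would argue in three stages.

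\begin{itemize}
\item \emph{The images of $D$ are gaps.} The set $\si_3(D)=T^{k-1}$ (or the top edge of $T^k$ in the $Q$ case), together with its forward images $T^{k-2},\dots,T^0$, consists of limits of images of $D_n$. Hence these are gaps or leaves of $\lam$. Their edges are limits of leaves, and no leaf of $\lam$ can cross them, since limits of non-crossing leaves do not cross.
\item \emph{The complementary arcs are short.} The union of the vertices of these sets is forward invariant, and all complementary arcs have length $<\frac13$, as observed in Section \ref{ss:intro-lam}. A standard pullback argument then says that an invariant lamination containing these gaps, with $C$ and $D$ as its critical sets, is unique. Every leaf must lie in a pullback of the arcs. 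Since $\si_3$ is injective on short arcs away from the critical sets, any leaf is determined by pulling back along the forward orbit until it lands inside $T^0\cup\dots$.
\item \emph{Invariance and nondegeneracy.} I also need $\lam$ to be invariant, and I need no extra leaves to accumulate, for instance inside $H^k$. Inside $H^k$ the limit can only add diagonals compatible with $\si_3$-invariance. Such a diagonal maps to a leaf inside $T^{k-1}$, which must be an edge. This forces either nothing (giving $\lam^k$) or the quadrilateral $Q^k$ (giving $\lam^k_\square$).
\end{itemize}

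I expect this core step to be the main obstacle. The issue is that a limit lamination can in principle acquire extra leaves not visible in the limit critical sets, for example inside $H^k$ or in its pullbacks. The tool for this is the rigidity of leaves with sibling constraints (Thurston's sibling invariance), applied inside the gaps $T^j$, which are infinite gaps whose images are edges or gaps of the same family.

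Realization comes last. For each $k$ I would exhibit dendritic cubic polynomials, or laminations of Side polynomials, whose critical portraits are $\{\ol{\al_n\,\al_n+\frac13},\ol{\be_n\,\be_n+\frac13}\}$ with $\al_n\to0$, $\be_n\to0$ along suitable non-periodic angles. These angles would be chosen inside the arcs that, by Theorem \ref{t:main1}, produce $D=H^k$ or $D=Q^k$; that theorem already asserts all its possibilities realize. Kiwi's realization of non-periodic critical portraits by dendritic polynomials \cite{kiw04} then provides $P_n$. Finally, compactness of laminations plus the uniqueness established above shows that $\lam_{P_n}\to\lam^k$ or $\lam^k_\square$.
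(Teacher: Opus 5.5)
Your proposal has a genuine gap, and it is structural. You derive the theorem from Theorem~\ref{t:main1} and Corollary~\ref{c:main1}, but in the paper those results are \emph{consequences} of the classification of $\lam$: Proposition~\ref{p:nec} is proved by invoking Proposition~\ref{p:limlam}. So the list $D\in\{H^k,Q^k,\ol{0\frac23}\}$, which is the real content, is assumed rather than proved, and the ideas that produce it are absent. The paper's route has three steps.
\begin{enumerate}
\item Lemma~\ref{l:T0}. If the $\oc_n$ are passive, the degree-one map above $\oc_n$ has a fixed point in a finite invariant rotational gap or leaf $G_n$ of $\lam_n$. Then $\lim G_n$ must be $T^0$, so $T^0$ is a gap of $\lam$.
\item Take the largest $k$ with $T^0,\dots,T^{k-1}$ gaps of $\lam$. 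It is finite, since otherwise $\ol{0\frac12}\in\lam$ and Lemma~\ref{l:12} gives $D=\ol{0\frac23}$.
\item Examine the pullbacks of $T^{k-1}$ in $\lam$. A single two-to-one pullback must be $D=H^k$, giving $\lam^k$. Two one-to-one pullbacks leave a critical quadrilateral between them. It is $Q^k$ because $\ol{0\frac23}$ must be its diagonal and $T^k$ is not a gap. Its fourth edge is a leaf by sibling invariance, and a diagonal inside it would force $D=\ol{0\frac23}$. Uniqueness then gives $\lam^k_\square$.
\end{enumerate}
The unicritical case also needs separate treatment via Lemma~\ref{l:uni3}: there $C=D\supset\rhd$, so your dichotomy ``$C=\ol{0\frac13}$ or $D=\ol{0\frac23}$'' fails, although this case lands in the excluded one.

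Even granting Corollary~\ref{c:main1}, your core step only works for $D=H^k$. There it is correct and a nice shortcut: $\si_3^j(D_n)\to T^{k-j}$ are limits of gaps with nonempty interior, hence gaps of $\lam$, and uniqueness yields $\lam^k$. For $D=Q^k$, however, $\si_3(D)$ is a single leaf; for $k=2$ it is $\ol{0\frac13}$ itself. Its forward orbit therefore only shows that certain edges of the $T^j$ are leaves. It gives no control over leaves inside $T^0,\dots,T^{k-1}$ (for example the diagonals $\ol{0\,3^{-n}}$ of $T^0$), and the uniqueness statement you invoke needs these sets to be gaps. Your third bullet does not repair this. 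When $D=H^k$ is a limit of the gaps $D_n$, no leaf of $\lam$ enters its interior, so $Q^k$ never arises by ``adding diagonals'' to $H^k$; it arises when $T^{k-1}$ has two one-to-one pullbacks.

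Finally, your realization part again leans on Theorem~\ref{t:main1}, whose realization claim comes from Proposition~\ref{p:limlam1}. Prescribing critical angles and applying Kiwi's theorem does not control the critical gap-leaves $D_n$, which is exactly what your uniqueness argument needs. The paper instead builds explicit postcritically finite dendritic laminations with critical sets $\oc_N$ and $H^k_N$ (resp.\ $Q^k_N$). These are obtained from the rotational gap $G_N$ and its pullbacks $T^j_N$, and they converge to $\oc_\circ$ and $H^k$ (resp.\ $Q^k$).
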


All the results stated in the Introduction can be reformulated (and generalized) in the setting of
laminations and critical portraits, without any reference to polynomials.
This will be done later in the text.

The plan of the paper is as follows. In Section \ref{s:mot} we describe our motivation considering known quadratic results and
describing advances made in the cubic case so far. In Section \ref{s:lclam} we
discuss basic facts about laminations. In Section \ref{s:advance} we supply advanced results on laminations. Finally, in Section \ref{s:main} we
prove the main results of the paper.

\section{Motivation: a quest for a model}
\label{s:mot}
In this section, some basic terminology and concepts from holomorphic polynomial dynamics are assumed;
 detailed definitions concerning invariant laminations are given later.

\subsection{The quadratic case: the statement of the problem}\label{ss:quad}

An important problem in complex dynamics is to construct a model of the %degree $d$
 connectedness locus in the space of all degree $d$ polynomials.
In the quadratic case, this was done
by Thurston in the 80-s in his seminal preprint that appeared in print much later \cite{thu85}.
Thurston related complex polynomials to \emph{laminations}.

Thus, constructing a model for the quadratic connectedness locus was reduced
to constructing a model for the space of quadratic invariant laminations.

Observe that, in the quadratic case, critical portraits are simply diameters, and the space of diameters is, topologically, a circle.
Essentially, Thurston associated each quadratic lamination $\lam$ with diameters compatible with $\lam$ and, in the periodic case, satisfying
some additional properties. In this way he defined an equivalence relation on the space of
diameters which has the following properties:

\begin{enumerate}
\item 	the equivalence relation is closed,
\item 	classes are finite, and
\item	each class corresponds to a unique q-lamination and every lamination corresponds to a unique class.
\end{enumerate}

Hence the associated quotient space of the circle is a model of the space of all quadratic q-laminations.

\subsection{The cubic case: the statement of the problem}\label{ss:cubic}

The  situation for degree 3 is much more complicated, except that there is a transparent
description of the special class of cubic dendritic laminations.
Let us state some facts that follow from \cite{bopt19}.
Consider only cubic q-laminations that have no infinite gaps (such q-laminations are said to be
\emph{dendritic} regardless of the degree).
For brevity let us call critical portraits of dendritic q-laminations \emph{dendritic critical portraits}.
With each dendritic q-lamination $\lam$, associate the \emph{collection of all critical portraits compatible with $\lam$}
called the \emph{primary tag} of $\lam$. This \emph{partitions} the space of all
dendritic critical portraits into primary tags; equivalently, a dendritic critical portrait defines
a unique dendritic q-lamination with which it is compatible. We call this the \emph{primary partition (of the space of dendritic critical portraits)}.

The primary partition has nice topological properties
similar to those listed above in the quadratic case.
Consider the equivalence relation $\approx$ on the space of all dendritic critical portraits with classes of equivalence being primary tags of dendritic q-laminations.
If $\Po'_i\approx \Po''_i$ are equivalent critical portraits, $\Po'_i\to \Po'$, and
$\Po''_i\to \Po''$ where $\Po'$ is a dendritic critical portrait, then $\Po''$ is a critical portrait compatible with the same dendritic lamination as $\Po'$, and so $\Po'\approx \Po''$. Thus, the primary partition is an upper semicontinuous partition of the non-compact space of all dendritic critical portraits.

Since dendritic critical portraits are dense in the space of all critical portraits,
this naturally leads to the next problem.

\begin{mainpro}
Describe the partition of the space of all critical portraits which is the  closure of the primary partition, i.e. the finest closed equivalence relation on the space of all critical portraits that extends the primary equivalence relation.
\end{mainpro}

Solving it will yield a natural combinatorial (one can also say ``laminational'') model of the space of all q-laminations and, eventually,
to a model of the associated connectedness locus of polynomials.

In practical terms this means that we should consider various (not necessarily dendritic!) limit critical portraits
$\Po$ and $\Po'$ and assume that there are equivalent dendritic critical portraits $\Po_i\to \Po$ and $\Po'_i\to \Po'$. As the first step in the direction of solving the Main Problem we can try to answer the following question: given $\Po$, what can $\Po'$ be?

We analyze this problem
for the ``worst'' non-dendritic critical portrait, in a way opposite (in terms of its properties) to the dendritic critical portraits.
While the dendritic critical portraits are all such that no critical chord of a critical portrait has a periodic endpoint, in this paper we consider
the critical portrait $\Po=(\ol{0 \frac13}, \ol{0 \frac23})$ whose critical leaves share a common fixed endpoint. Our analysis leads to an explicit description
of the family of critical portraits $\Po'$ which is a special closed countable family of critical
portraits. We will also describe the associated q-laminations.

\subsection{The quadratic case: the details}\label{ss:quadetail}

Let us now go back and describe the quadratic case in more detail. Because laminations are closely
related to  polynomials, Thurston's model also provides a model for the connectedness locus of quadratic polynomials.
A major problem in complex dynamics, the so-called MLC conjecture, is to prove that Thurston's model is homeomorphic to the boundary of
the Mandelbrot set $\Mc_2$ (equivalently, to
verify that $\Mc_2$ is locally connected).

In the quadratic case the space of polynomials consists of complex polynomials of the form $P_c(z)=z^2+c$ forming the \emph{quadratic family}.
The fact that it is real 2-dimensional plays a significant role in \cite{thu85}. In particular, there are countably many parabolic polynomials
in the quadratic family, they all have distinct dynamical properties and belong to the boundary of $\Mc_2$. Thus,
no 2-dimensional disks inside $\Mc_2$ consist of polynomials with parabolic dynamics. The construction from \cite{thu85},
that essentially deals with the boundary of $\Mc_2$ (i.e., with the part of $\Mc_2$ ``visible'' from infinity),
provides a model for the entire Mandelbrot set.

More specifically, Thurston associates with every polynomial $P_c$ with locally connected Julia set $J_c$
the set of arguments of external rays landing at the critical value $c$ of $P_c$. A less elegant but essentially equivalent construction
can be implemented if one associates with each quadratic polynomial with locally connected Julia set the set %%of pairs
of arguments
of rays landing on its critical point. % and having the same image ray.
This can be extended onto all polynomials $P_c\in \Mc_2$.

As a combinatorial analog of polynomials, Thurston introduced the concepts of an \emph{invariant lamination} and of
a \emph{topological polynomial}. Consider a polynomial
$P$ with locally connected Julia set. Take the Riemann map for the basin of attraction of infinity and the external rays defined by this map. Declare angles $\al, \be$
equivalent if the external rays with arguments $\al, \be$ land on the same point. This
equivalence relation is closed, all classes of equivalence are finite, and their convex hulls are pairwise disjoint.
The family of edges of convex hulls of all these equivalence classes is called the \emph{q-lamination (of $P$)}.

Q-laminations can be defined with no regard to polynomials, i.e. by listing their geometric properties.
Thurston did that
in \cite{thu85} but the definitions he gave were slightly less restrictive. This led to the concept of a \emph{lamination}.
More precisely, two segments in $\R^2$ are \emph{unlinked} if their
union does not have the shape of the letter $X$. For a chord $\ol{xy}\subset \cdisk$ set $\si_d(\ol{xy})=\ol{\si_d(x) \si_d(y)}$ (here $\si_d$ is
the $d$-tupling map of the unit circle $\uc$ to itself). A family of pairwise unlinked
chords is said to be a \emph{$\si_d$-invariant lamination} if $\si_d$ acts on them so that certain dynamical properties are satisfied
(these properties mimic those of invariant q-laminations).
These chords are called \emph{leaves (of the lamination)} while the closures of
the complementary components of $\cdisk$ to the union of all the leaves are called  \emph{gaps
(of the lamination)}. A gap $G$ is said to be  \emph{finite/infinite} if so is $G\cap \uc$.

\subsection{The dendritic case}\label{ss:dendr}

For a polynomial $g$ with non-locally connected Julia set $J_g$,
the above construction does not yield any q-la\-mi\-na\-tion.
However, there is a way around when $g$ has only repelling cycles.
In the Introduction we considered such polynomials and briefly explained how in that case a q-lamination
can be associated to a polynomial with connected but not locally connected Julia set; our presentation was based upon the results of
\cite{kiw04} (see also \cite{bco11}). For the sake of completeness we present here an equivalent approach, also based upon
\cite{kiw04} and \cite{bco11}. Namely,
in that case a result of Jan Kiwi \cite{kiw04} (see also \cite{bco11}) provides for the following construction
(observe that the result and the associated construction apply to polynomials of any degree $d>1$).
Take all (pre)periodic points $z$ of $g$ and consider the corresponding gap-leaves $G_g(z)$
(recall that $G_g(z)$ is the convex hull of the set of arguments of all external rays landing on $z$).
Consider the edges of all these gap-leaves.
The closure of the just constructed family of chords is an invariant lamination $\lam_g$.
Declare points $u, v\in \uc$
\emph{equivalent in the sense of $g$} if a finite concatenation of leaves of $\lam_g$ connects $u$ and $v$; denote this relation $\sim_g$.
The equivalence relation $\sim_g$ is closed.
The quotient space $\uc/\sim_g=J_{\sim_g}$ is a %metrizable
continuum called the \emph{topological Julia set}
on which $\si_3$  acts.  The  map $F_g$, induced on $J_{\sim_g}$ by $\si_d$,  is called the \emph{topological polynomial induced by $g$}.
By Kiwi \cite{kiw04}, there exists a monotone map $\psi_g:J_g\to \uc/\sim_g$ semiconjugating $g|_{J_g}$ and $F_g$.
The topological Julia set $J_g$ will be a \emph{dendrite} (i.e., a locally connected continuum with no subsets that are Jordan curves)
which is why such laminations $\lam$ (and the corresponding polynomials) will be called \emph{dendritic}.

However if a polynomial is not dendritic and has a non-locally connected Julia set its association with a combinatorial
structure (such as lamination) is more complicated. An indirect way of doing so can be based upon the fact that polynomials
exist not in vacuum but in spaces of polynomials. Therefore a natural way of handling this problem is to approximate a polynomial $P$
by polynomials with locally connected Julia sets or dendritic polynomials, and then to associate to $P$ the Hausdorff limit
of the corresponding sequence of q-laminations. Such limits have properties similar to those of q-laminations
but not exactly the same. Thurston's definition of a lamination aims at taking care of this issue using geometric
and dynamic properties of leaves of laminations and relaxing properties of q-laminations to absorb limits of q-laminations as well.

\subsection{The cubic case: the details}\label{ss:cubic-detail}

In this paper we are interested in the cubic case. Call a chord $\ol{ab}$ \emph{critical} if $\si_3(a)=\si_3(b)$. Call a pair of
unlinked \emph{critical} chords a \emph{critical portrait}. Given a dendritic cubic (i.e., $\si_3$-invariant) lamination $\lam$, call a
collection of chords $\Lf$ \emph{compatible}
with $\lam$ if no chord $\ell\in \Lf$ crosses leaves of $\lam$.
As in the quadratic case, we tag a dendritic lamination $\lam$ with its \emph{critical tag}, defined as
the collection of all critical portraits compatible with $\lam$. If $f$ is a cubic polynomial with only repelling cycles then, by \cite{kiw04},
we associate $f$ with the invariant dendritic lamination $\lam_f$ and also with the critical tags of $\lam_f$.
We call a critical portrait $\Po$ a \emph{Side critical portrait} if all critical leaves in $\Po$ have
non-periodic endpoints. A q-lamination $\lam$ is called a \emph{Side q-lamination} (for Siegel/dendritic)
if all critical portraits compatible with it are Side critical portraits.

Let us now give an overview of \cite{bopt19}. 
Critical tags of distinct dendritic cubic laminations
are pairwise disjoint (do not share critical portraits).
Suppose that $\Po_\circ$ is a critical portrait compatible with a cubic dendritic lamination $\lam$
 and that $\Po_\circ$ is the limit of a sequence of critical portraits of dendritic laminations $\lam_n$.
Let $\Po$ be the limit of some other sequence of critical portraits of $\lam_n$.
One of the results of \cite{bopt19} claims that then $\Po$ is compatible with $\lam$.
In other words, the family of critical tags of distinct dendritic cubic laminations is \emph{upper semicontinuous in itself}
which makes the map from cubic dendritic laminations to their critical tags a continuous map of non-compact spaces.
This, together with some geometric considerations,
 allowed us to construct a model of the space of all cubic dendritic polynomials \cite{bopt19}.

As a step towards extending this construction onto the entire cubic connectedness locus, in this paper we consider the
case $\Po_\circ=(\ol{0 \frac13}, \ol{0 \frac23})$,
 which can be viewed as an extreme opposite to the dendritic critical portraits considered in \cite{bopt19}.
It turns out that for $\Po_\circ=(\ol{0 \frac13}, \ol{0 \frac23})$ the corresponding set of limit critical portraits $\Po$ is countable, closed
and is associated with a specific countable family of laminations which we explicitly describe
 (see Corollary \ref{c:main-po} and Theorem \ref{t:main-lam}).

The present paper extends some results of \cite{bopt19} as it studies the convergence of Side critical portraits under new assumptions.
We believe that the description of limit critical portraits obtained in 
 Corollary \ref{c:main-po}
 can be useful for solving the problem of constructing
a combinatorial model of the cubic connectedness locus. This serves as our motivation.

\section{Laminational equivalence relations} %and our main theorem}
\label{s:lclam}
%Set $\uc=\{z\in\C\,|\,|z|=1\}$.
Let $\M_d$ denote the connectedness locus in the space of all complex degree $d$ monic centered polynomials.
We parameterize the external
rays of a polynomial $f\in\M_d$ by %\emph{angles}, i.e.,
elements of $\R/\Z$ (so, the full angle is $1$ rather than $2\pi$).
The external ray of argument $\theta\in\R/\Z$ is denoted by $R_f(\theta)$;
 note that $f$ maps $R_f(\theta)$ to $R_f(d\theta)$.
Let $\si_d:\uc\to \uc$ be the self-mapping of $\uc\subset \C$ that takes $z$ to $z^d$
(if we represent $\uc$ as $\R/\Z$ then $\si_d(t)=d\cdot t\, \mod\, 1$).
A \emph{chord} $\ol{ab}$ is a closed straight line segment connecting points $a$, $b$
of $\uc$, often represented by their arguments
(thus, $\ol{\frac 13\frac 23}$ is the chord connecting the points $e^{2\pi i/3}$ and $e^{4\pi i/3}$).
Two chords \emph{cross} if their endpoints alternate on $\uc$; an important class of
families of chords are \emph{prelaminations}, i.e. families of chords that pairwise do not cross.
A closed prelamination is called a \emph{lamination}.
Chords from a prelamination $\lam$ are called \emph{leaves of $\lam$}.

Convex hulls of closed nowhere dense subsets $X$ of $\uc$ are called \emph{polygons} and are denoted by $\ch(X)$ (without assuming
that the cardinality of $X$ is finite). Polygons are \emph{finite, infinite, countable, uncountable} depending on the cardinality
of the set $X$. If $X$ consists of more than two points, all edges of $\ch(X)$ are leaves of a lamination $\lam$, and no leaf
intersects the interior of $\ch(X)$, then $\ch(X)$ is said to be a \emph{gap} (of $\lam$). 
Oftentimes gaps are denoted by $G$; then the set $G\cap \uc$ 
is called the \emph{basis} of $G$.
Define the $\si_d$-image of a chord $\ol{ab}$ as the chord $\ol{\si_d(a) \si_d(b)}$, i.e. through the action of $\si_d$
on $a$ and $b$ (when we say that we apply $\si_d$ to a chord $\ol{ab}$ we apply it to the endpoints of $\ol{ab}$).

For $f\in\M_d$, let $\psi$ be the Riemann map $\psi:\mathbb C\setminus \overline{\mathbb D}\to\mathbb C\setminus K_f$ with $\psi'(\infty)>0$
so that $f\circ \psi(z)= \psi(z^d)$.
For $f\in\M_d$ with locally connected $J_f$, the map $\psi$ can be continuously extended over the boundary of $\mathbb D$
(i.e., the unit circle $\uc$) so that for the extended map $\ol{\psi}$ we can define $\ol{\psi}(e^{2\pi i\theta})$,
the landing point of $R_f(\theta)$;  then $\ol{\psi}:\uc\to J_f$ is a semi-conjugacy
between $\si_d:\uc\to\uc$ and $f:J_f\to J_f$
called the \emph{Caratheodory loop}.
Recall that $J_f$ is locally connected if and only if so is $K_f$.
Define an equivalence relation $\sim_f$ on $\uc$ as follows: $x \sim_f y$ if and only if $\ol{\psi}(x)=\ol{\psi}(y)$
and call $\sim_f$ the \emph{laminational equivalence relation (generated by $f$)}.
The relation $\sim_f$ is $\si_d$-invariant and $\sim_f$-classes --- which evidently coincide with
 $G'_f(z)$ for various $z\in J_f$ (see the Introduction) --- have pairwise disjoint convex hulls.
The quotient space $\uc/\sim_f=J_{\sim_f}$ is called a \emph{topological Julia set}.
Clearly, $J_{\sim_f}$ is homeomorphic to $J_f$.
The map $F_{\sim_f}: J_{\sim_f}\rightarrow J_{\sim_f}$, induced by $\sigma_d$ and
called a \emph{topological polynomial}, is topologically conjugate to $f|_{J_f}$.
See Fig. \ref{fig:bas} for an illustration of these notions.

\begin{figure}[!htb]
    \centering
    \begin{minipage}{0.5\textwidth}
        \centering
        \includegraphics[height=0.2\textheight]{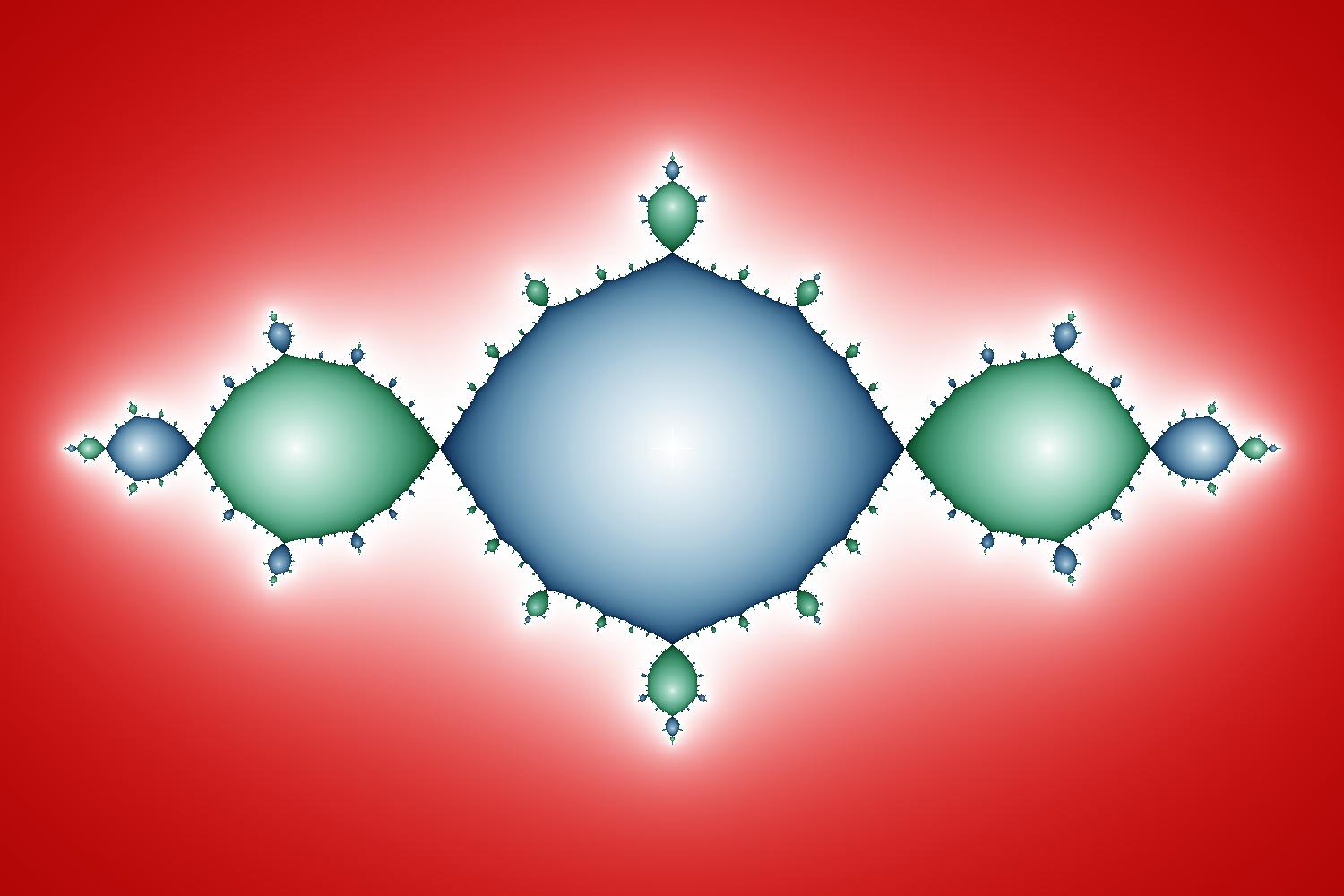}
        %\label{fig:basil}
    \end{minipage}%
    \begin{minipage}{0.5\textwidth}
        \centering
        \includegraphics[height=0.2\textheight]{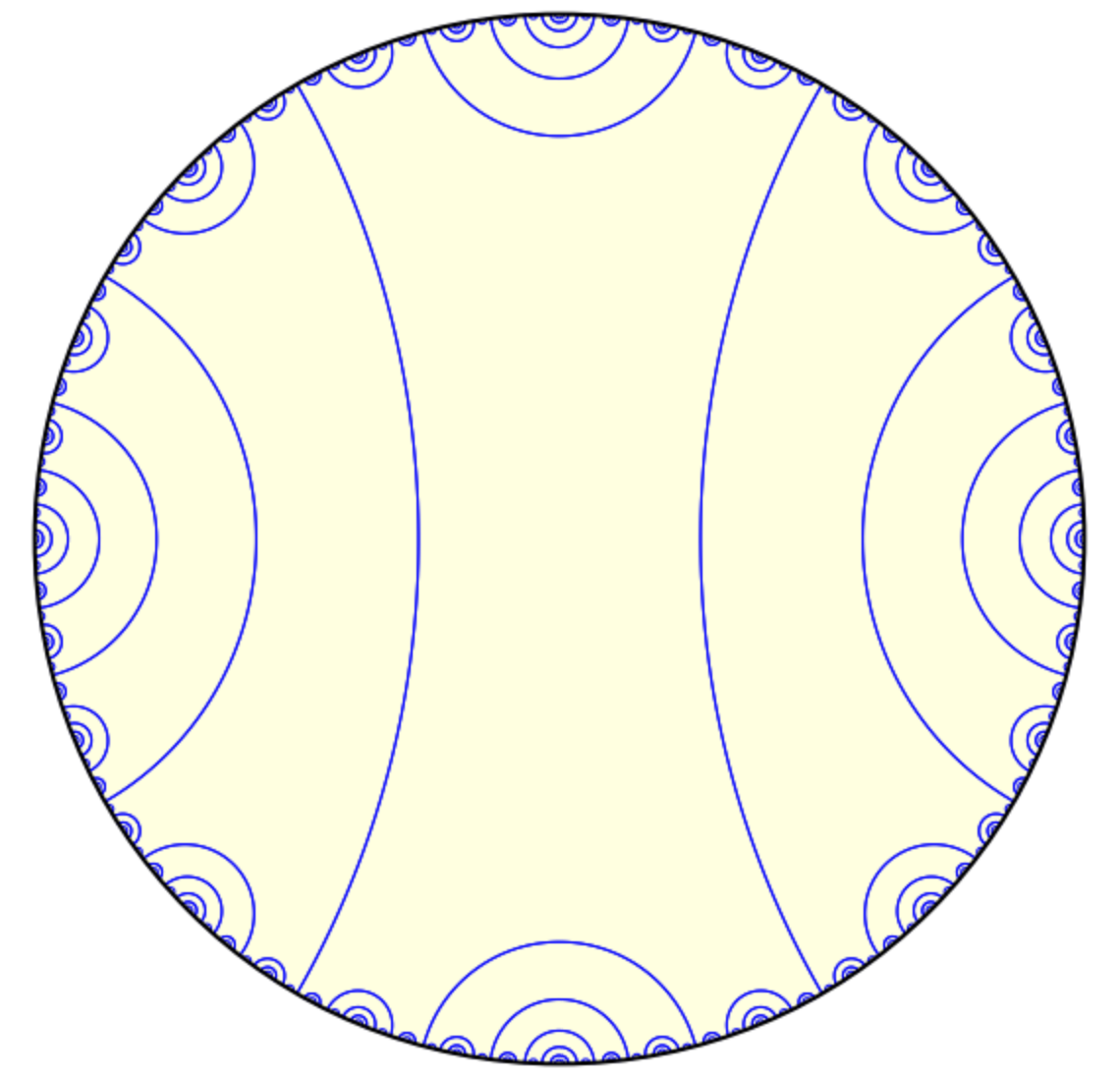}%{lam_basilica.jpg}
        %\caption{The lamination for the Julia set of $z^2-1$}
        %\label{fig:qml}
    \end{minipage}
\label{fig:bas}
 \caption{Left: the Julia set of $f(z)=z^2-1$ (so-called ``basilica'').
 Right: the corresponding $\si_2$-invariant lamination
 ($\sim_f$-equivalent points are connected by arcs inside the disk).}
\end{figure}

Equivalence relations analogous to $\sim_f$ can be introduced
with no reference to polynomials (see \cite{hubbdoua85, thu85}, see also
\cite{bl02}). Let $\sim$ be an
equivalence relation on $\uc$. Equivalence classes of $\sim$ will
be called \emph{($\sim$-)classes}.

\begin{dfn}\label{d:lameq}
An equivalence relation $\sim$ on $\uc$ is a \emph{($\si_d$-)invariant
laminational equivalence relation} if it is:

\begin{enumerate}
\item[(E1)] 
\emph{closed}: the graph of $\sim$ is a closed
set in $\ucirc \times \ucirc$;

\item[(E2)] 
\emph{unlinked}: if $\g_1$ and $\g_2$ are distinct $\sim$-classes,
then their convex hulls $\ch(\g_1), \ch(\g_2)$ in the unit disk $\bbd$
are disjoint;

\item[(E3)] \emph{finite}: all $\sim$-classes are finite;

\item[(D1)] 
\emph{forward invariant}: for a class $\g$,
the set $\si_d(\g)$ is a class too;

\item[(D2)] 
\emph{backward invariant}: for a class $\g$,
its preimage $\si_d^{-1}(\g)=\{x\in \ucirc: \si_d(x)\in \g\}$ is a
union of classes;

\item[(D3)] \emph{orientation preserving}: for any $\sim$-class $\g$ with more than two points, the
map $\si_d|_{\g}: \g\to \si_d(\g)$ is a \emph{covering map with
positive orientation}, i.e., for every connected component $(s, t)$ of
$\ucirc\setminus \g$ either $\si_d(s)=\si_d(t)$ or  the arc in the circle $(\si_d(s), \si_d(t))$ is a
connected component of $\ucirc\setminus \si_d(\g)$.
\end{enumerate}

Here, conditions (E1) -- (E3) are requirements on the  \textbf{E}qui\-va\-lence relation, while
conditions (D1) -- (D3) deal also with the \textbf{D}ynamics of $\si_d$.
Note that (D1) implies (D2).
\end{dfn}

(Invariant) laminational equivalence relations have visual counterparts (in what follows the degree $d\ge 2$ is fixed).
A formal definition of q-laminations (also called \emph{clean laminations} in the language of Thurston) is given below.
The letter ``q'' is added to emphasize that such a lamination corresponds to an invariant laminational
e\textbf{q}uivalence relation on the unit circle.

\begin{dfn}\label{d:q}
Let $\sim$ be an (invariant) laminational equivalence relation. Then the family of all edges of the convex hulls of $\sim$-classes,
together with all points of $\uc$, is called an \emph{(invariant) q-lamination (generated by $\sim$)} and is
denoted by $\lam_\sim$; chords in $\lam_\sim$ are said to be the \emph{leaves} of $\lam_\sim$.
\end{dfn}

Since q-laminations come from laminational equivalence relations we will denote them $\lam_\sim$ indicating that the q-lamination $\lam_\sim$
is generated by a laminational equivalence $\sim$. We will also denote by $\opsi_\sim$ the map from $\uc$ to $J_\sim=\uc/\sim$ that semiconjugates
$\si_d$ and the topological polynomial $F_\sim:J_\sim\to J_\sim$.

\begin{dfn}\label{d:siegel}
A periodic gap $G$ of period $n$ of a q-lamination $\lam_\sim$ is said to be a periodic
\emph{Siegel} gap if $\opsi_\sim$ maps $G\cap \uc$ to a Jordan curve $T\subset J_\sim$ and
semiconjugates $\si_d^n|_{G\cap \uc}$ and $F^n_\sim|_T$ %acting on $T$ as an irrational rotation.
with the latter being conjugate to an irrational rotation of the circle.
\end{dfn}

Now we introduce a new class of laminations and critical portraits. %It is convenient to consider laminations that admit only finite or Siegel periodic gaps.

\begin{dfn}[Side q-laminations]\label{d:side} A q-lamination without infinite gaps is said to be \emph{dendritic}.
A q-lamination with a  Siegel gap is said to be \emph{Siegel}. A q-lamination whose all gaps are finite or Siegel
is said to be \emph{\textbf{Si-}egel-\textbf{de-}ndritic}, or simply a \emph{Side} q-lamination.
A \emph{Side} critical portrait is a critical portrait whose chords have
non-periodic endpoints.
\end{dfn}

It is easy to see \cite{bot26} that a lamination is Side if and only if it is incompatible with any critical chord that has a
periodic endpoint.

\section{Advanced facts about laminations}\label{s:advance}

Here we supply more information about laminations.

\subsection{Thurston invariant laminations}
Given a collection $\Xc$ of sets,
denote by $\bigcup \Xc$ the union of all these sets.

\begin{dfn}[Laminations]\label{d:geolam}
Two distinct chords \emph{cross}, or are \emph{lin\-ked}, if they intersect
inside $\disk$, equivalently, if their endpoints are all distinct and alternate on $\uc$.
A \emph{prelamination} is a family of chords $\lam$ cal\-led
\emph{leaves} such that distinct leaves are unlinked and all points of
$\uc$ are (degenerate) leaves. If the set
$\bigcup\lam$ is compact, then $\lam$ is called a
\emph{lamination}. Two (pre)laminations are \emph{compatible} if their leaves do not cross.
Thus, the union of two compatible (pre)laminations is a (pre)lamination.
\end{dfn}

A set, consisting of one chord and all individual points of $\uc$,
is a prelamination. This prelamination
is compatible with a prelamination if its non-degenerate chord does not cross leaves of that prelamination.
Abusing the language, we will say in this case that \emph{the given chord is compatible with a prelamination}.
From now on, $\lam$ always denotes a prelamination.

\begin{dfn}[Gaps and edges]\label{d:gaps} \emph{Gaps} of a lamination $\lam$ are the closures of
the components of $\disk\sm \bigcup\lam$. A gap $G$ is \emph{countable $($finite,
un\-coun\-table$)$} if $G\cap\uc$ is countable infinite (finite, un\-coun\-table).
Uncountable gaps are called \emph{Fatou} gaps.
\end{dfn}

Convergence of (pre)laminations $\lam_i$ to a set of chords $\mathcal E$
is understood as the Hausdorff metric convergence of leaves of $\lam_i$ to chords from $\mathcal E$;
all Hausdorff metric limits of sequences of chords from $\lam_i$ form $\mathcal E$.
Evidently, $\mathcal E$ is a lamination. A lamination $\lam$ is \emph{nonempty} if it
has nondegenerate leaves and \emph{empty} otherwise
(the empty lamination is denoted by $\lam_\0$; note that it is not the empty set
as it contains all points of $\uc$). Say that $\lam$ is \emph{countable}
if it has countably many nondegenerate leaves and \emph{uncountable}
otherwise; $\lam$ is \emph{perfect} if it has no isolated leaves.
In this paper by \emph{countable} we always mean \emph{infinite countable}.

If $G\subset\cdisk$ is the convex hull of $G\cap\uc$, let $\si_d(G)$
be the convex hull of $\si_d(G\cap\uc)$. This is not the action of a map from
$\cdisk$ to $\cdisk$ restricted on the set $G$. Rather, it is a map from the family of
convex hulls of closed subsets of $\uc$ to itself.
The most important particular case here is when $G=\ol{ab}$ is a chord.
In that case, $\si_d(G)=\si_d(\ol{ab})=\ol{\si_d(a)\si_d(b)}$,
and we call $\ol{ab}$ a \emph{pullback chord} of $\ol{\si_d(a)\si_d(b)}$.
However in some cases it is useful to consider a special extension of $\si_d$ onto $\cdisk$.
The map $\si_d$ can be extended continuously over $\bigcup\lam$
so that it acts linearly on every leaf of $\lam$.
Denote this extended map by $\si_d$, too.

A map $m:X\to Y$ of a topological space $X$ to a topological space $Y$ is \emph{monotone} if for each $y\in Y$, $m^{-1}(y)$ is connected.

\begin{dfn}[Thurston Invariant Lamination \cite{thu85}] \label{dfn-Thurston}
A la\-mi\-na\-tion $\mathcal{L}$ is {\em Thurston ($\si_d$)-invariant} if it
satisfies the following conditions.

\begin{enumerate}

\item Forward $d$-invariance: for any leaf $\ell=\overline{pq} \in
    \mathcal{L}$, either $\sigma_d(p) = \sigma_d(q)$, or
    $\overline{\sigma_d(p)\sigma_d(q)}=\si_d(\ell) \in \mathcal{L}$.

\item Backward invariance: for any leaf $\overline{pq} \in
    \mathcal{L}$, there exists a collection of $d$ {\bf disjoint}
    leaves in $\mathcal{L}$ (this collection of leaves may not be
    unique), each joining a pre-image of $p$ to a pre-image of $q$.

\item Gap invariance: For any gap $G$, the convex hull $H$ of $\si_d(G\cap\uc)$
 is a gap, a leaf, or a single point (of $\uc$). If $H$ is a gap, $\si_d|_{\bd(G)}:\bd(G)\to\bd(H)$ maps as the
composition of a monotone map and a covering map to the boundary of
the image gap, with positive orientation (the image of a point
moving clockwise around $\bd(G)$ must move clockwise around the
image $\bd(H)$ of $G$).
\end{enumerate}

\end{dfn}

By Lemma 2.1 \cite{bl02}, \emph{any q-lamination is Thurston invariant}.

In what follows by $[x, y], (x, y), \dots$ we mean the circle arc of the corresponding type
with endpoints $x, y\in \uc$ such that the movement along $\uc$ inside the arc from $x$ to $y$ is in the positive direction
(i.e., counterclockwise).

\begin{dfn}[Holes]\label{d:hole} Let $G$ be a gap of a Thurston invariant lamination. Let $\ell=\ol{ab}$ such that the motion
from $a$ to $b$ in the arc $(a, b)$ is counterclockwise. Then the arc $(a, b)$ is called a \emph{hole} of $G$.
By Definition \ref{dfn-Thurston}(3), any hole $(a, b)$ of a gap $G$ of a Thurston-invariant lamination either $\si_d(a)=\si_d(b)$ or
the positively oriented arc $(\si_d(a), \si_d(b))$ is a hole of $\si_d(G)$.
\end{dfn}

\subsection{Sibling invariance and gaps}
We will work with so-called \emph{sibling
($\si_d$-in\-va\-riant) laminations}. They form a closed subspace
of the space of all $\si_d$-invariant laminations, which still
contains all q-la\-mi\-na\-tions (in other words, q-laminations and
all their limits are sibling $\si_d$-invariant). Since our main
interest lies in studying q-laminations and their limits, it will
be more convenient to work with sibling $\si_d$-invariant
laminations than with $\si_d$-invariant laminations in the
sense of Thurston. Other advantages of working with sibling
$\si_d$-invariant laminations are that they are defined through
properties of their leaves (gaps are not involved in the definition)
and that the space of all of them is smaller (and hence easier to
deal with) than the space of all Thurston $\si_d$-invariant laminations.

\begin{dfn}\cite[Definition 3.1]{bmov13}\label{d:siblinv}
A lamination $\lam$ is \emph{sibling ($\si_d$-)invariant}
provided:
\begin{enumerate}
\item for each $\ell\in\lam$, we have $\si_d(\ell)\in\lam$,
\item \label{2}for each $\ell\in\lam$ there exists $\ell'\in\lam$ so that $\si_d(\ell')=\ell$.
\item \label{3} for each $\ell\in\lam$ so that $\si_d(\ell)=\ell'$
    is a non-degenerate leaf, there exist d {\bf disjoint} leaves
    $\ell_1,\dots,\ell_d$ in $\lam$ so that $\ell=\ell_1$ and
    $\si_d(\ell_i)=\ell'$ for all $i=1,\dots,d$.
\end{enumerate}
\end{dfn}

Let us list a few properties of sibling $\si_d$-invariant
laminations.

\begin{thm}\cite[Theorem 3.2, Lemma 3.20 and Theorem 3.23]{bmov13}\label{t:siblinv} Sibling $\si_d$-invariant
laminations are invariant in the sense of Thurston. The space of
all sibling $\si_d$-invariant laminations is compact. All
q-laminations are
sibling $\si_d$-invariant.
\end{thm}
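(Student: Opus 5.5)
The statement just quoted is Theorem \ref{t:siblinv}, which the paper cites from \cite{bmov13} (Theorem 3.2, Lemma 3.20 and Theorem 3.23). I will reprove its three assertions in the order closedness, Thurston invariance, q-laminations.

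\textbf{Compactness.} The space of laminations, with the Hausdorff topology on the closed set $\bigcup\lam$ or equivalently on the family of leaves, is compact, so it suffices to show that sibling invariance survives limits. Let $\lam_i\to\lam$ and $\ell=\lim\ell_i$ with $\ell_i\in\lam_i$.
\begin{enumerate}
\item Condition (1) passes to the limit by continuity: $\si_d(\ell_i)\to\si_d(\ell)$.
\item For condition (2), choose preimage leaves $\ell'_i\in\lam_i$ and pass to a convergent subsequence.
\item For condition (3), suppose $\si_d(\ell)$ is nondegenerate. Then $\si_d(\ell_i)$ is nondegenerate for large $i$, so there are $d$ pairwise disjoint siblings in $\lam_i$. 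Their limits are $d$ leaves of $\lam$ mapping to $\si_d(\ell)$.
\end{enumerate}
The delicate point in (3) is that these $d$ limit leaves remain disjoint. Each limit is a nondegenerate chord whose image is nondegenerate, so its length stays away from $0$ and from multiples of $\frac1d$. Disjoint siblings with a common image have endpoints in distinct preimage classes of the two endpoints of $\si_d(\ell)$. These preimages are separated by arcs of length at least $\frac1d$ minus a small error, so limits cannot acquire common endpoints. They also cannot cross, being limits of unlinked chords.

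\textbf{Thurston invariance.} Forward invariance is condition (1), and backward invariance follows from (2) and (3); if $\si_d(\ell)$ is degenerate, a separate argument builds the $d$ disjoint preimages using critical leaves. The main obstacle is gap invariance. Let $G$ be a gap and $\ol{ab}$ an edge of $G$ with nondegenerate image. Take its $d$ disjoint siblings; exactly one of them is on $\bd G$. One shows that the sibling collection forces the images of consecutive edges of $G$ to be consecutive edges of the image. Concretely, for a hole $(a,b)$ of $G$, the arc $(\si_d(a),\si_d(b))$ contains no point of $\si_d(G)$. Otherwise some other edge's sibling configuration would produce a leaf crossing an edge of $G$ or entering $G$.

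This is where I expect the real work. I would first handle finite gaps via a counting and orientation argument on the sibling leaves, then infinite gaps by approximation, using that the holes of $G$ are the complementary arcs. This yields the monotone-composed-with-covering, orientation-preserving description of $\si_d|_{\bd G}$.

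\textbf{q-laminations.} Let $\lam_\sim$ be a q-lamination. Condition (1) holds by (D1), and condition (2) by (D2), since classes have preimage classes mapping onto them. For condition (3), given an edge $\ell$ of $\ch(\g)$ with nondegenerate image edge $\ell'$ of $\si_d(\g)$, use (D3) and (D2). The full preimage $\si_d^{-1}(\si_d(\g))$ splits into classes, each covering $\si_d(\g)$ with positive orientation. Counting degree over the edge $\ell'$ produces exactly $d$ preimage edges, one per sheet. These edges lie on hulls of distinct classes, or are distinct edges of the same hull, and in either case they are pairwise disjoint by (E2) and the covering structure. One of them is $\ell$, which gives the $d$ disjoint siblings.
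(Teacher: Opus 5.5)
The paper gives no proof of this theorem; it simply quotes \cite{bmov13}. Your closedness argument is fine. The cleanest reason the limit siblings stay disjoint is that they join distinct preimages of $p$ to distinct preimages of $q$, and crossing is an open condition. Use the Hausdorff topology on families of leaves, as the paper does, rather than on $\bigcup\lam$. The union does not determine the lamination: the full cones of leaves at two different vertices both have union $\cdisk$. Backward invariance is immediate from (2) and (3), and trivial for points, so no extra argument with critical leaves is needed.

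The genuine gap is gap invariance. This is the real content of \cite[Theorem 3.2]{bmov13}, and you only announce it. Your justification of the hole property (``some other edge's sibling configuration would produce a leaf crossing an edge of $G$'') is not an argument, and the claims around it are wrong or unusable:
\begin{enumerate}
\item A gap may have several siblings of one edge among its edges. The critical Fatou gap of the basilica lamination has both $\ol{\frac13\frac23}$ and $\ol{\frac56\frac16}$ as edges.
\item Infinite gaps need not have consecutive edges.
\item ``Approximating'' an infinite gap by finite ones fails. Finite subpolygons of $G$ are not gaps of $\lam$, while your finite-gap argument needs edges that are leaves with sibling collections.
\end{enumerate}
You also never address the other half of condition (3) of Definition \ref{dfn-Thurston}: that $H=\ch(\si_d(G\cap\uc))$ is actually a gap. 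That is, no leaf of $\lam$ crosses its interior and all its edges are leaves.

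The missing idea is a lemma about a full sibling collection. Let $\ell_1,\dots,\ell_d\in\lam$ be pairwise disjoint with common nondegenerate image $\ol{pq}$. Their endpoints are all $d$ preimages of $p$ and all $d$ preimages of $q$, and these alternate around $\uc$. Each $\ell_i$ joins a $p$-preimage to a $q$-preimage. Hence along the boundary of any component $A$ of $\cdisk\setminus\bigcup\ell_i$, either every circle arc runs counterclockwise from a $p$-preimage to a $q$-preimage, or every one runs from a $q$-preimage to a $p$-preimage. Therefore $\si_d(\ol A\cap\uc)$ lies in $[p,q]$ or in $[q,p]$. Since no leaf enters the interior of $G$, the gap $G$ lies in the closure of one such component.

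This lemma does the work in three steps:
\begin{enumerate}
\item Apply it to the siblings of a non-critical edge $\ol{ab}$ of $G$ with hole $(a,b)$. This gives that $\si_d(G\cap\uc)\cap(\si_d(a),\si_d(b))$ is empty, which is the hole property.
\item Apply it to the siblings of a preimage leaf, given by (2), of a leaf $m$ crossing the interior of $H$. This gives a contradiction, so no leaf enters $H$.
\item The hole property makes $\si_d|_{\bd(G)}$ locally weakly orientation preserving, so it lifts to a non-decreasing map of some degree $k$. If $\si_d(G\cap\uc)$ has at least three points then $k\ge 1$, since a non-decreasing degree-$0$ lift is constant. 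So the map is onto $\bd(H)$, every edge of $H$ is the image of an edge of $G$ and hence a leaf by (1), and the map is a monotone map followed by a covering. This works uniformly for finite and infinite gaps.
\end{enumerate}

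In your q-lamination part, two claims fail when $\si_d(\g)$ is a leaf: ``exactly $d$ preimage edges, one per sheet,'' and ``distinct edges of the same hull are disjoint.'' A class covering that leaf with degree $k\ge 2$ has $2k$ edges, all mapping onto it, and adjacent ones share a vertex. You must select the $k$ alternate edges containing $\ell$. When $|\si_d(\g)|\ge 3$, the edges over a given edge are non-adjacent, which is what makes them disjoint. Also, condition (1) needs (D3), not just (D1).
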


While, by Theorem~\ref{t:siblinv}, all sibling $\si_d$-invariant
laminations are invariant in the sense of Thurston, it
is easy to see that the opposite is
not true already for quadratic laminations (see an example in \cite{bmov13} right after Lemma 3.20). Unless specified otherwise,
in what follows by \emph{($\si_d$-)invariant} laminations we mean \emph{sibling ($\si_d$-)invariant}
laminations.

Let us now discuss gaps in the context of $\si_d$-invariant laminations.
The \emph{degree} of an orientation preserving covering
 map of a topological circle is the number of preimages of a generic point.
This concept is further developed below.

\begin{dfn}\label{d:degree}
If $\ell$ is a leaf of $\lam$ then the degree of $\ell$ is $1$ if $\ell$ is not critical and $2$ if $\ell$ is critical.
Let $G$ be a gap of an invariant lamination; $G$ is \emph{all-critical} if all its edges are critical.
If $G$ is all-critical, the degree $\deg(G)$ of $G$ is the number of vertices of $G$. If $G$ is not all-critical,
the degree $\deg(G)$ of $G$ is the number of components in the set $\si_d^{-1}(x)\cap \bd(G)$ for any point $x\in \si_d(G')$
($\si_d$ is considered only on $\bd(G)$).
\end{dfn}

Let us also define (pre)periodic gaps.

\begin{dfn}[Periodic and (pre)periodic gaps]\label{d:gaps-i}
Let $G$ be a gap of an invariant lamination $\lam$.
A gap/leaf $U$ of
$\lam_\sim$ is said to be \emph{{\rm(}pre{\rm)}periodic} of period $k$
if $\si_d^{m+k}(U')=\si_d^m(U')$ for some $m\ge 0$, $k>0$; if $m, k$
are chosen to be minimal, then if $m>0$, $U$ is called
\emph{preperiodic}, and, if $m=0$, then $U$ is called \emph{periodic
$($of period $k)$}. If the period of $G$ is $1$, then $G$ is said to be
\emph{invariant}. We define \emph{precritical} and
\emph{{\rm(}pre{\rm)}critical} objects similarly to how (pre)periodic
and preperiodic objects are defined above.
\end{dfn}

A more refined series of definitions deals with infinite periodic
gaps of $\si_d$-invariant laminations. There are three
types of such gaps: \emph{caterpillar} gaps, \emph{Siegel} gaps, and
\emph{Fatou gaps of degree greater than one}. We define them below.
Observe that, by \cite{kiw02}, infinite gaps eventually map onto
periodic infinite gaps.

\begin{lem}\cite[Lemma 2.28]{bopt20}\label{l:edges} Any edge of a (pre)periodic gap is either
(pre)periodic or (pre)critical.
\end{lem}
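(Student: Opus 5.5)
The plan is to treat a gap $G$ with edge $\ell=\ol{ab}$ and follow the forward orbit of $\ell$. By assumption $G$ is (pre)periodic, so there are $m\ge 0$ and $k>0$ with $\si_d^{m+k}(G)=\si_d^m(G)$. Definition \ref{dfn-Thurston}(3) and the hole formalism of Definition \ref{d:hole} show that each image $\si_d^j(\ell)$ is one of two things. Either it is an edge of $\si_d^j(G)$, or it has collapsed to a point because some earlier image was critical. If some $\si_d^j(\ell)$ is degenerate, then $\si_d^{j-1}(\ell)$ is a critical chord. In that case $\ell$ is (pre)critical and we are done.

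So assume no image of $\ell$ is critical. Then each $\si_d^j(\ell)$ is a nondegenerate edge of the gap $\si_d^j(G)$, and by the hole property the hole of $G$ at $\ell$ maps to a hole of $\si_d^j(G)$. Replacing $G$ by $\si_d^m(G)$, we may assume $G$ is periodic of period $k$. Set $F=\si_d^k$; then $F$ maps the set of edges of $G$ into itself. The goal is to show that $\ell$ is eventually periodic under $F$.

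The key quantitative input is hole lengths. The holes of $G$ are pairwise disjoint arcs, so their lengths sum to at most $1$. Consequently, for any $\varepsilon>0$ only finitely many holes have length $\ge\varepsilon$. If a hole has length $<1/(d+1)$ and its edge is not critical, then $\si_d$ multiplies its length by $d$. More generally, a non-critical hole of length $<1/d$ maps to a hole of length exactly $d$ times longer. If instead the length is at least $1/d$, then either the edge is critical or the image hole has length given by the fractional part of $d$ times the length.

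Start with the hole $h_0$ of $\ell$ and look at the images $h_j$ of $h_0$ under $\si_d^{jk}$. These are holes of $G$, and their lengths grow by a factor of $d^k$ as long as they stay below $1/d$. Hence some $h_j$ has length at least $d^{-k-1}$ or so. More precisely, the lengths $|h_j|$ cannot all stay below $d^{-k}$: if they did, they would grow geometrically forever, which is impossible. So infinitely many of the $h_j$ have length bounded below by a fixed constant $\delta>0$, and there are only finitely many holes of $G$ of length $\ge\delta$. Two of these long holes must therefore coincide, say $h_i=h_{i'}$ with $i<i'$. This makes $\si_d^{ik}(\ell)$ periodic, so $\ell$ is (pre)periodic.

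The main obstacle is justifying that the hole of $\si_d(\ell)$ in $\si_d(G)$ is exactly the image arc, with length computed mod $1$, and that long holes cannot map injectively forever. The hole-image statement is precisely Definition \ref{d:hole}, which applies to any Thurston invariant lamination and hence to sibling laminations by Theorem \ref{t:siblinv}. The pigeonhole step then uses only that $G$ has finitely many holes of length $\ge\delta$. The case where $G$ is a finite gap is trivial, since $G$ then has finitely many edges.
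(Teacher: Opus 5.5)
The paper does not prove this lemma; it only cites \cite{bopt20}, so there is no internal argument to compare against. Your proof is correct, and it is the standard hole-length argument for this fact.

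\begin{itemize}
\item \emph{Reduction.} You reduce to a periodic gap $G$ of period $k$. Every image $\si_d^i(G)$ is then a gap, because the image of a leaf or a point is never a gap. Hence the hole statement of Definition \ref{d:hole} applies at every intermediate step.
\item \emph{Expansion.} Under $\si_d^k$, a hole of length $<d^{-k}$ is stretched by exactly $d^k$, and no edge along the way can be critical. So along the orbit of a non-(pre)critical edge, the hole lengths are $\ge d^{-k}$ infinitely often.
\item \emph{Pigeonhole.} Holes of $G$ are pairwise disjoint, so only finitely many have length $\ge d^{-k}$. A hole determines its edge, so $\si_d^{ik}(\ell)=\si_d^{i'k}(\ell)$ for some $i<i'$.
\end{itemize}

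Two small fixes are needed.
\begin{itemize}
\item The phrase ``grow by a factor of $d^k$ as long as they stay below $1/d$'' should use the threshold $d^{-k}$. This is the threshold you actually use in the next sentence.
\item The paper's definition allows a (pre)periodic gap to map eventually onto a leaf or a point. Your reduction to a periodic gap should say that in this case the edge is immediately (pre)periodic or (pre)critical.
\end{itemize}
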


Let us now classify infinite gaps.

\begin{dfn}\label{d:fatou}
A periodic \emph{Fatou gap of period $n$ is of degree $k>1$} if the degree of
$\si_d^n|_{\bd(G)}$ is $k>1$. Any periodic Fatou gap of degree greater than $1$ is said to be
\emph{hyperbolic}. If the degree of a Fatou gap $G$ is $2$ (resp., $3$),
then $G$ is said to be \emph{quadratic} (resp., \emph{cubic}).
\end{dfn}

The next lemma is well-known (similar results are obtained in
\cite{blo86, blo87a, blo87b} for ``graph'' maps).

\begin{lem}\label{l:fatou}
Let $G$ be a hyperbolic gap of period $n$ and of degree $k>1$. Then the map
$\si_d^n|_{\bd(G)}$ is monotonically semiconjugate to $\si_k$ by a map $\psi:\bd(G)\to \uc$
such that $\psi(G\cap\uc)=\uc$.
\end{lem}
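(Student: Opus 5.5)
The plan is to build the semiconjugacy $\psi$ by symbolic coding: nested partitions of $\bd(G)$ into arcs under $f:=\si_d^n|_{\bd(G)}$ will match the base-$k$ partitions of $\uc$ under $\si_k$. Here $\si_d$ is extended linearly over edges of $G$, as in the discussion after Definition~\ref{d:gaps}.

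By Definition~\ref{dfn-Thurston}(3), together with Theorem~\ref{t:siblinv} (sibling invariant laminations are Thurston invariant), $f$ maps $\bd(G)$ onto itself. It is the composition of a monotone map and an orientation preserving covering map of degree $k$. Such a map of a topological circle has at least $k-1\ge 1$ fixed points.

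\emph{Step 1: anchor the partition at a point of $G\cap\uc$.} Take a fixed point $x$ of $f$. If $x$ lies inside an edge $\ol{ab}$, then that edge is invariant or collapsed. In either case one of its endpoints, or the image point, gives a fixed point in $G\cap\uc$; replace $x$ by it. Since $f$ has degree $k$, the set $f^{-1}(x)$ meets $\bd(G)$ in exactly $k$ components. From each component choose a point of $G\cap\uc$, say $x_0=x,x_1,\dots,x_{k-1}$, in positive cyclic order. The arcs $A_j=[x_j,x_{j+1}]\subset\bd(G)$ (indices mod $k$) cover $\bd(G)$. Orientation preservation and degree $k$ imply that $f$ maps each $A_j$ monotonically onto all of $\bd(G)$.

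\emph{Step 2: define $\psi$ via the nested partitions.} Pull the partition back inductively. At level $m$ we get $k^m$ arcs $A_{j_1\dots j_m}$, with endpoints in $G\cap\uc$, such that
\[
f(A_{j_1\dots j_m})=A_{j_2\dots j_m}.
\]
These arcs are ordered exactly as the base-$k$ arcs $[\,0.j_1\dots j_m,\ 0.j_1\dots j_m+k^{-m}\,]$ of $\uc$. For $y\in\bd(G)$ set $\psi(y)$ to be the point of $\uc$ whose base-$k$ address is an address of $y$. Ambiguities occur only at partition endpoints, and there the two addresses differ by the usual $\dots j\,(k-1)(k-1)\dots=\dots(j+1)00\dots$ identification, so $\psi$ is well defined.

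Continuity and order preservation are immediate from the nesting. The identity $\psi\circ f=\si_k\circ\psi$ holds because $f$ acts as the shift on addresses. Each fiber $\psi^{-1}(t)$ is a decreasing intersection of (unions of at most two adjacent) closed arcs, hence connected; thus $\psi$ is monotone.

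\emph{Step 3: surjectivity onto $\uc$ from the basis.} For every address, the nested intersection is nonempty. It contains a point of $G\cap\uc$: its endpoints are limits of partition endpoints, which lie in the closed set $G\cap\uc$. Therefore $\psi(G\cap\uc)=\uc$.

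\emph{Main obstacle.} I expect the difficulty to be in Steps 1--2: making sure the pulled-back arcs $A_{j_1\dots j_m}$ really can be chosen with endpoints in $G\cap\uc$ and are consistently ordered. Critical edges, which $f$ collapses, and the non-injective monotone part of $f$ mean that preimages of points are subarcs rather than points. I would handle this by always choosing, within each preimage component, the point of $G\cap\uc$ that is extreme in the positive direction. Orientation preservation from Definition~\ref{dfn-Thurston}(3) then guarantees that the arcs are cyclically ordered and that each maps monotonically onto its image arc.
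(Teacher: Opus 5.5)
Your argument is correct. It also does more than the paper, which gives no proof of Lemma \ref{l:fatou}: it only calls the lemma well known and points to Blokh's results on graph maps.

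The standard route is analytic. Lift $f$ to a non-decreasing $F$ with $F(t+1)=F(t)+k$ and set $\psi=\lim_m k^{-m}F^m$. This converges uniformly because $F(t)-kt$ is bounded, and it is automatically a monotone degree-one semiconjugacy to $\si_k$. The real content of the lemma is then the extra claim $\psi(G\cap\uc)=\uc$. That requires showing separately that $\psi$ collapses every edge of $G$, for instance by combining Lemma \ref{l:edges} (edges are (pre)periodic or (pre)critical) with the expansion of $\si_k$.

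Your nested-partition construction replaces that step with bookkeeping. All partition points lie in $G\cap\uc$, so at every level each edge lies in a single partition arc. Edges are therefore collapsed automatically, and Step 3 follows just from $G\cap\uc$ being closed, with no appeal to Lemma \ref{l:edges}. The cost is that you must build the partitions with care; the analytic route gets the semiconjugacy for free.

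Two details should be stated explicitly rather than left to the ``main obstacle'' remark.

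First, each component of $f^{-1}(p)$ with $p\in G\cap\uc$ is an arc with endpoints in $G\cap\uc$. The reason is that a non-collapsed edge is mapped linearly onto an edge, so its interior goes into $\disk$. Hence any edge meeting $f^{-1}(p)$ must be collapsed and lies entirely in that component. The same observation shows that the ``collapsed'' case in your Step 1 cannot occur.

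Second, the ``positive-extreme'' choice rule must be overridden at the endpoints $x_j$ of the level-one arcs. In particular $x_0=x$ need not be the extreme point of its component of $f^{-1}(x)$, yet it must be kept so that the subarcs still cover $A_0$. Elsewhere the rule must stay deterministic, so that the chosen preimage of a given point is the same at every level. That is exactly what guarantees that level $m+1$ refines level $m$, and hence that the nesting you rely on for continuity and monotonicity actually holds.
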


Surprisingly, gaps of degree 1 have a bit more complicated properties.

\begin{dfn}\label{d:caterpillar} An infinite  gap $G$ is said
to be a \emph{caterpillar} gap if its basis $G\cap\uc$ is countable.
%(see Figure~6).
\end{dfn}

As as an example, consider a periodic gap $Q$ such that:
\begin{itemize}
 \item  The boundary of $Q$ consists of a periodic leaf
     $\ell_0=\ol{xy}$ of period $k$, a critical leaf
     $\ell_{-1}=\ol{yz}$ concatenated to it, and a countable
     concatenation of leaves $\ell_{-n}$ accumulating at $x$ (the
     leaf $\ell_{-r-1}$ is concatenated to the leaf $\ell_{-r}$,
     for every $r=1$, 2, $\dots$).
\item We have $\si^k(x)=x$, $\si^k(\{y, z\})=\{y\}$, and $\si^k$
    maps each $\ell_{-r-1}$ to $\ell_{-r}$ (all leaves are shifted
    by one towards $\ell_0$ except for $\ell_0$, which maps to
    itself, and $\ell_{-1}$, which collapses to the point $y$).
\end{itemize}

The description of $\si_3$-invariant caterpillar gaps is in
\cite{bopt16b}. In general, the fact that the basis of a
caterpillar gap $G$ is countable implies that there are lots of
concatenated edges of $G$. Other properties of caterpillar gaps can
be found in Lemma~\ref{l:cater}.

\begin{lem}\label{l:cater}
Let $G$ be a caterpillar gap of period $k$. Then the degree of
$\si_d^k|_{\bd(G)}$ is one, $G\cap\uc$ contains at least one periodic point and, if there are multiple periodic points,
all have the same period.
\end{lem}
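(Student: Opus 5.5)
The plan is to study the first return map $g:=\si_d^k|_{\bd(G)}$ on the Jordan curve $\bd(G)$, where $\si_d$ is extended linearly over the edges of $G$. By Definition \ref{dfn-Thurston}(3), which applies to sibling invariant laminations by Theorem \ref{t:siblinv}, $g$ is the composition of a monotone map and a positively oriented covering of $\bd(G)$ onto itself. Its degree is at least $1$ because $G$ is an infinite periodic gap, so it maps onto itself rather than collapsing to a leaf or a point.

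\emph{Degree one.} Suppose the degree were $m>1$. Then $G$ would be a hyperbolic gap, and Lemma \ref{l:fatou} would give a monotone map $\psi:\bd(G)\to\uc$ with $\psi(G\cap\uc)=\uc$. This is impossible, since $G\cap\uc$ is countable and $\uc$ is not. So the degree is $1$, and $g$ is a monotone degree one map of the circle $\bd(G)$.

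\emph{Existence of a periodic point.} Monotone degree one circle maps have a well-defined rotation number. I rule out an irrational rotation number first. If it were irrational, $g$ would be monotonically semiconjugate to an irrational rotation by some continuous surjection $\psi:\bd(G)\to\uc$. Every edge of $G$ is a segment whose image under $\psi$ is a connected set, so $\psi(\bd(G))$ would be the union of $\psi(G\cap\uc)$ with countably many arcs, one for each edge. Here is a subtle point: an edge could a priori map onto a nondegenerate arc. In that case I would argue that such an edge, iterated by $g$, produces infinitely many images with pairwise disjoint nondegenerate $\psi$-images. These images must be non-collapsing, non-critical edges of $G$, each of Euclidean length bounded below up to the expansion of $\si_d$. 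But $\si_d$ expands the lengths of non-critical short chords, which gives a contradiction. So every edge maps to a point, $\uc=\psi(G\cap\uc)$ is countable, and we have a contradiction. Hence the rotation number is rational, $p/q$, and $g$ has a periodic orbit of period $q$ in $\bd(G)$. If such a periodic point lies in the interior of an edge $\ell$, then $g^q(\ell)=\ell$ because $g$ is linear on edges and the edge does not collapse. Orientation preservation then forces the endpoints of $\ell$ to be fixed by $g^q$ (or by $g^{2q}$ in the flip case, which is excluded by positive orientation). In every case $G\cap\uc$ contains a periodic point.

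\emph{Equal periods.} For a monotone degree one circle map with rotation number $p/q$ in lowest terms, every periodic orbit has period exactly $q$; this is the standard consequence of orientation preservation, since all periodic orbits are cyclically ordered like a rotation. Therefore all periodic points of $g$ in $G\cap\uc$ have the same period $q$ under $\si_d^k$. Any $\si_d$-periodic point $x\in G\cap\uc$ is also $g$-periodic, because $\si_d^{kn}(x)=x$ for suitable $n$. So the periods, measured by the first return to $G$, coincide.

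I expect the main obstacle to be the irrational rotation number case, namely excluding edges whose $\psi$-images are nondegenerate arcs (wandering-interval type behaviour). If the expansion argument above is inconvenient, I would fall back on the result from \cite{kiw02} that infinite gaps are eventually periodic and on the no-wandering-triangle type facts for edges of (pre)periodic gaps (Lemma \ref{l:edges}): every edge is (pre)periodic or (pre)critical. Under an irrational semiconjugacy, such an edge must eventually collapse under $\psi$ to a point, since a periodic edge would force a rational rotation number.
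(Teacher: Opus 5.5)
This is essentially the paper's proof. You get degree one from Lemma~\ref{l:fatou}, use the dichotomy between an irrational rotation number (with a monotone semiconjugacy $\psi$) and the existence of periodic points, and exclude the irrational case with your fallback. Via Lemma~\ref{l:edges} every edge is eventually periodic, which is impossible without periodic points, or precritical, hence collapsed by $\psi$. So $\psi(\bd(G))=\psi(G\cap\uc)$ would be countable. Your extra steps (a periodic point inside an edge yields periodic vertices; all periodic orbits have the period $q$ of the rotation number $p/q$) supply details the paper leaves implicit.

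Use that fallback rather than the expansion argument, which does not close as written. The arcs $\psi(g^n(\ell))$ are rotates of one arc and so have equal positive length. Your claim that they are pairwise disjoint is therefore itself the contradiction, provided you first prove the edges $g^n(\ell)$ are distinct and use the monotonicity of $\psi$. Likewise, lengths bounded below are contradicted by a gap having only finitely many edges longer than a fixed $\delta$, not by the expansion of short chords.
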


Recall that $\si_d$ has been extended over all edges of $G$.

\begin{proof}
We may assume that $k=1$. %Consider $\si_d|_{\bd(G)}$.
By Lemma \ref{l:fatou},
if the degree $r$ of $\si_d|_{\bd(G)}$ is greater than one, then $\psi(G\cap\uc)=\uc$
and, hence, $G\cap\uc$ is uncountable, a contradiction.

If the degree of $\si_d|_{\bd(G)}$ is one, then it is known
\cite{ak79, blo84} that either (1) $\si_d|_{\bd(G)}$ is
monotonically semiconjugate to an irrational rotation by a map
$\psi$, or (2) $\si_d|_{\bd(G)}$ has periodic points. Take the set
$B$ of all points of $\bd(G)$ that do not belong to open segments
in $\bd(G)$, on which $\psi$ is a constant. In case (1) $\psi$
collapses the edges of $G$ to points
because otherwise there would exist a finite union of
their $\psi$-images covering the whole $\uc$ while by
Lemma~\ref{l:edges} any edge of $G$ eventually maps to a point or to
a periodic edge of $G$. Hence $B$ is uncountable contradicting the
definition of a caterpillar gap. Thus, (2) holds.
\end{proof}

The next lemma is well-known.

\begin{lem}\label{l:siegel}
Let $G$ be a periodic Siegel gap of period $n$. Then $G\cap \uc$ contains
no periodic points while $G$ has an edge whose eventual image is critical.
\end{lem}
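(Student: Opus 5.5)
Let $\psi$ be the monotone semiconjugacy from $\si_d^n|_{\bd(G)}$ to an irrational rotation $R$ of $\uc$; it comes with Definition~\ref{d:siegel} via $\opsi_\sim$, or, for a general invariant lamination, from the degree-one dichotomy already used in the proof of Lemma~\ref{l:cater}. Replacing $\si_d$ by $\si_d^n$, I will assume $n=1$. The two claims are then proved separately.

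\emph{No periodic points in $G\cap\uc$.} Suppose $x\in G\cap\uc$ satisfies $\si_d^m(x)=x$. Then
\[
\psi(x)=\psi(\si_d^m(x))=R^m(\psi(x)),
\]
so $\psi(x)$ is a periodic point of the irrational rotation $R$. This is impossible, since $R$ has no periodic points.

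\emph{Some edge eventually maps onto a critical leaf.} Suppose the contrary: no edge of $G$ has a critical eventual image. By Lemma~\ref{l:edges}, every edge of $G$ is then (pre)periodic. A periodic edge would have periodic endpoints in $G\cap\uc$, which the first part forbids. Hence $G$ has no (pre)periodic edges either, because such an edge eventually maps to a periodic edge of $G$, and $G$ is periodic. Therefore $G$ has no edges at all. This contradicts the fact that $G$ is a gap of a nonempty lamination: $G\cap\uc$ is nowhere dense in $\uc$, so $\bd(G)$ contains chords. (Alternatively: if $G$ had edges but no critical ones, then $\si_d|_{\bd(G)}$ would be a homeomorphism onto its image whose edges are all (pre)periodic, and we would reach the same contradiction.)

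\emph{Main obstacle.} The delicate point is justifying the reduction to Lemma~\ref{l:edges}: its edges must be either (pre)periodic or (pre)critical. The case analysis above then depends on excluding periodic edges via the first part, which in turn relies on $\psi$ not collapsing periodic points in a way that hides them. Since $\psi$ is monotone and conjugates the dynamics to $R$, any collapsed arc is mapped among collapsed arcs. A periodic endpoint would force a periodic fiber of $\psi$, hence a periodic point of $R$, which is again impossible.
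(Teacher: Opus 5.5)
Your proof is correct. The paper gives no proof of this lemma; it simply calls it well known, so there is no argument of the authors to compare with.

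Your route is the natural one in the paper's framework and closely mirrors the proof of Lemma~\ref{l:cater}:
\begin{itemize}
\item the semiconjugacy to an irrational rotation rules out periodic points in $G\cap\uc$;
\item Lemma~\ref{l:edges} then leaves only (pre)critical edges.
\end{itemize}
In fact your argument proves slightly more than the lemma states: \emph{every} edge of $G$ eventually maps to a critical leaf.

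Three small tidy-ups.
\begin{itemize}
\item You claim that a (pre)periodic edge of $G$ eventually maps to a periodic edge \emph{of $G$}. This uses the hole property (Definition~\ref{d:hole}): a non-collapsed image of an edge of $G$ is an edge of $\si_d(G)$. You should cite it explicitly.
\item The case where $G$ has no edges is handled more cleanly without nowhere-density or nonemptiness of the lamination. If $G$ has no edges then $G\cap\uc=\uc$, which contains the fixed points of $\si_d$ and so contradicts your first part.
\item Your ``main obstacle'' paragraph is unnecessary. The semiconjugacy is defined on all of $G\cap\uc$, so any periodic point there immediately gives a periodic point of the rotation; nothing can be ``hidden'' by collapsing.
\end{itemize}
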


In this paper we study limits of q-laminations.

\begin{dfn}\label{d:rigileaf} A leaf/gap $G$ of a lamination $\lam$ is \emph{rigid}
if any q-lamination close to $\lam$ has $G$ as its leaf/gap.
\end{dfn}

We studied rigidity for laminations in \cite{bopt16}. For the sake of partial self-containment we reprove
a couple of claims from \cite{bopt16} here but refer to \cite{bopt16} for  other claims.

\begin{lem}[Lemmas 2.4-2.10 of \cite{bopt16}]\label{l:rigid}
Any periodic leaf of
$\lam$ is  rigid. Moreover, any hyperbolic periodic gap of $\lam$ that has no eventual image with a critical edge is rigid too.
\end{lem}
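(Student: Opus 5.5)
Both claims rest on one mechanism: a periodic object is robust under perturbation because a nearby object is forced to be periodic of the same period, by expansion of $\si_d$. Throughout, ``close'' means close in the Hausdorff metric on sibling invariant laminations, and the q-laminations $\lam'$ are taken close to $\lam$.

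\textbf{Periodic leaves.} Let $\ell=\ol{ab}$ be a periodic leaf of $\lam$ of period $n$, and let $\lam'$ be a nearby q-lamination.

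1. Since $\lam'$ is close to $\lam$, it has a leaf $\ell'$ close to $\ell$.
2. If $\ell'\ne\ell$, I first want to see that no iterate of $\ell'$ is critical. The chords of the orbit of $\ell$ have lengths bounded away from $0$ and from the critical lengths $\frac jd$. This holds because a periodic leaf is never critical and never degenerate. By continuity the same is true for the first $n$ iterates of $\ell'$.
3. If $\ell'\ne\ell$ shares an endpoint with $\ell$, the other endpoints are repelled by $\si_d^n$ near the fixed endpoint. If $\ell'$ and $\ell$ are disjoint, the arcs between their endpoints are expanded by $\si_d^n$, by the factor $d^n$, as long as they stay short. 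Either way, iterating forces some image of $\ell'$ to cross $\ell'$ or another iterate. Alternatively it forces an iterate to pass over a critical length.
4. This contradicts the fact that the leaves of the q-lamination $\lam'$ do not cross.
5. So $\ell$ itself must be a leaf of $\lam'$. The key point is the standard expansion argument: nearby leaves near a periodic leaf would have to be its images, yet expansion pushes them away.
6. One subtlety remains. Since $\lam'$ is a q-lamination, $\ell'$ lies on the boundary of the convex hull of a $\sim'$-class. I use finiteness of classes together with (D3) to exclude $\ell'$ converging to $\ell$ from one side only while $\ell\notin\lam'$. In that situation the limiting class would be a periodic finite class rotated by $\si_d^n$. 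Its vertices are repelling, so its edges are isolated, which forces equality.

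\textbf{Hyperbolic gaps.} Let $G$ be a periodic hyperbolic gap of period $n$ and degree $k>1$, none of whose images has a critical edge.

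1. By Lemma \ref{l:edges}, every edge of $G$ is either (pre)periodic or precritical. The hypothesis rules out critical edges on the images of $G$, so all edges are eventually mapped to periodic edges of the orbit of $G$.
2. By the compactness of edges of $G$ and the degree count of Lemma \ref{l:fatou}, there are only finitely many periodic edges on the orbit of $G$. Indeed, $\si_d^n|_{\bd(G)}$ is semiconjugate to $\si_k$, and the periodic edges correspond to finitely many collapsed points of period dividing a bounded number.
3. By the first part, each of these finitely many periodic edges is a leaf of every nearby $\lam'$.
4. The gap $G$ is the unique region bounded by these periodic edges together with their pullbacks along the orbit. The pullbacks within $G$ are determined because no critical edge appears. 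Hence for $\lam'$ close to $\lam$, the $\si_d^n$-pullbacks of the periodic edges along $\bd(G)$ are forced to be leaves of $\lam'$ too. This uses sibling invariance (Theorem \ref{t:siblinv}): the required pullback is the unique non-crossing one, since $\lam'$ has no critical leaves near $\bd(G)$. These pullbacks are all the edges of $G$.
5. It remains to see that no leaf of $\lam'$ enters $G$. A leaf inside $G$ would map under $\si_d^n$ by the degree-$k$ semiconjugacy $\psi$ to chords of a $\si_k$-invariant lamination inside a gap containing a critical set. But $\lam$ has no such leaves, and by closeness the critical chords of $\lam'$ lie near those of $\lam$, inside $G$ in the limit.

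\textbf{Main obstacle.} I expect step 5 of the gap argument to be hardest: proving that $G$ stays a gap, with no leaves of $\lam'$ inserted inside it. For this I would rely on the degree count. $\lam'$ has exactly $d-1$ critical ``units,'' and $k-1$ of them must lie in $G$ to give degree $k$. A hyperbolic periodic gap of a q-lamination $\lam'$ containing that much criticality is then forced to be a Fatou gap rather than being subdivided. For the details I would cite \cite{bopt16}, as the statement does.
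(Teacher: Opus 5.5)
\paragraph{The leaf part has a genuine gap.} Your step~3 claims that any leaf $\ell'\ne\ell$ of $\lam'$ near $\ell$ leads to a crossing under iteration. That is true only when $\ell'$ crosses $\ell$: then $\si_d^n(\ell')$ crosses $\ell'$, or $\si_d^{2n}(\ell')$ does if $\si_d^n$ flips $\ell$. This is the first observation in the paper's proof.

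If $\ell'$ is unlinked with $\ell$ and lies on one side of it (possibly sharing an endpoint), expansion only pushes its images monotonically away from $\ell$. The images are nested and pairwise unlinked, so no contradiction appears. In fact the conclusion of steps 3--5, that a leaf close to a periodic leaf must equal it, is false. The $\si_2$-invariant q-lamination with classes $\{x,1-x\}$ (that of $z^2-2$) contains the invariant leaf $\ol{\frac13\frac23}$. It also contains the leaves $\ol{(\frac13+\epsilon)(\frac23-\epsilon)}$ for every small $\epsilon$.

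The same example refutes step~6: the periodic class $\{\frac13,\frac23\}$ has repelling vertices, yet its edge is not isolated. Step~6 is also not meaningful as stated, since a single $\lam'$ has no ``limiting class.'' So nothing in your argument forces $\ell$ itself to be a leaf of $\lam'$.

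\paragraph{The missing idea.} You need a way to choose the right leaf near $\ell$ and to exclude the repelled side; expansion alone cannot do that. The paper uses a periodic gap $G$ of $\lam$ with $\si_d^n(G)=G$ having $\ell$ as an edge. Fix a compact $H$ in the interior of $G$, let $G(\hlam)\supset H$ be the gap of the nearby q-lamination, and let $\hell$ be its edge near $\ell$. Then:
\begin{itemize}
\item $\hell$ cannot cross $\ell$, because of self-crossing;
\item $\hell$ cannot lie on the side of $H$, since otherwise $\si_d^n(\hell)$ enters the interior of $G(\hlam)$;
\item $\hell$ cannot lie on the far side. Then $\si_d^n(G(\hlam))$ and $G(\hlam)$ are gaps with intersecting interiors, hence equal. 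But $\si_d^n(\hell)\ne\hell$ would then be a second edge of this gap near $\ell$.
\end{itemize}
The invariance of $G(\hlam)$ is what does the work. The paper proves only this case and cites \cite{bopt16} for the rest.

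\paragraph{The gap part.} It rests on the flawed leaf part. Beyond that, steps 4--5 are assertions rather than arguments.
\begin{itemize}
\item Which pullbacks $\lam'$ contains depends on where its critical sets sit, and that is exactly what is in question.
\item Hausdorff closeness does not exclude short leaves of $\lam'$ inside $G$ near $G\cap\uc$, because degenerate leaves belong to $\lam$. Some dynamical argument is needed there; the degree count just restates the conclusion.
\item The finiteness of periodic edges in step~2 is true, but not for the reason you give, since $\si_k$ has infinitely many periodic points. The correct reason is that each cycle of periodic edges must pass through one of the finitely many holes of length at least $\frac1d$.
\end{itemize}
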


\begin{proof}
We may assume that $\lam$ is the limit lamination of a sequence of $\si_d$-invariant q-laminations. Then
the claim follows from Lemmas 2.4-2.10 of \cite{bopt16}. %However for the sake of completeness
Here we will provide a
proof only in a particular case. Let $G$ be a gap of $\lam$ such that
$\si_d^n(G)=G$ and $\ell=\ol{ab}$ is an edge of $G$ such that $\si_d^n(\ell)=\ell$. We claim that $\ell$ is rigid.
Choose a compact  subset  $H$ in the interior of $G$. If a q-lamination $\hlam$ is close to $\lam$ then there exists a unique gap $G(\hlam)\supset H$.
Moreover, $G(\hlam)$ has a unique edge $\hell$ that  is close to  $\ell$. Let us
describe the location of $\hell$ with respect to $\ell$. Evidently, as $\hlam\to \lam$, $\hell\to \ell$. Also, $\hell$ cannot cross $\ell$ as otherwise
$\si_d^n(\hell)$ will cross itself. If $\hell$ is located on the same side of $\ell$ as $H$ then $\si_d^n(\hell)$ will cross $G(\hlam)$, a contradiction.
Hence if $\hell\ne \ell$ then the only possibility is that $\hell$ is located outside $G$ and is repelled away from $\ell$ by $\si_d^n$. However in this case
we see that $\si_d^n(G(\hlam))\ne G(\hlam)$ while the interiors of $\si_d^n(G(\hlam))$ and $G(\hlam)$ are non-disjoint, a contradiction. It follows that
a periodic edge of a gap is rigid. Similar arguments are used in the proofs of Lemmas 2.4-2.10 \cite{bopt16} in which a few claims, including
the above quoted ones, are proven.
\end{proof}

\subsection{Cones}
The following definition was given in \cite{bopt16}.

\begin{dfn}\label{d:cone}
A family $C$ of leaves $\ol{ab}$ sharing the same endpoint $a$
is said to be a \emph{cone (of leaves of $\lam$)}. The point $a$ is
called the \emph{vertex} of the cone $C$ while all other points of $\uc\cap\bigcup C$ are called
the \emph{endpoints} of $C$. The entire set $\uc\cap \bigcup C$ is called
the \emph{basis} of the cone $C$ and is denoted by $C'$. We will often identify $C$ with $\bigcup C$.
A cone is said to be \emph{infinite} if it consists of infinitely many leaves.
\end{dfn}

We need results of \cite{bopt16} dealing with cones of invariant laminations.
Lemma \ref{l:inficon2} summarizes Lemmas 2.13, 2.14 and 2.15 from \cite{bopt16} and relies upon \cite{bmov13}.
However, first we state a basic lemma  which is instrumental in the proof of Lemma \ref{l:inficon2}.

\begin{lem}\cite[Corollary 3.7]{bmov13}\label{l:order} Let $\lam$ be a $\si_d$-invariant lamination.
If leaves $\ol{ab}, \ol{ac}\in \lam$ are such that $\si_d$ is injective on $\{a, b, c\}$ then $\si_d$ preserves the
circular order of $\{a, b, c\}$.
\end{lem}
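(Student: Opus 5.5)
The approach is to argue by contradiction, using the sibling collection of one of the two leaves to confine the third point $c$ to a region whose circle part $\sigma_d$ maps into the ``wrong'' arc. Write $a'=\si_d(a)$, $b'=\si_d(b)$, $c'=\si_d(c)$; these are three distinct points by injectivity. After relabeling $b$ and $c$ if necessary, assume $a,b,c$ are in positive circular order, and suppose toward a contradiction that $a',c',b'$ are in positive order, i.e.\ $c'\in(a',b')$. Since $\ol{ab}$ is a leaf with nondegenerate image, Definition \ref{d:siblinv}(3) provides $d$ pairwise disjoint leaves $\ell_1=\ol{ab},\ell_2,\dots,\ell_d$ of $\lam$, each mapping onto $\ol{a'b'}$.

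First I will establish a combinatorial fact about such a full sibling collection. The $2d$ endpoints of $\ell_1,\dots,\ell_d$ are exactly the $d$ preimages of $a'$ and the $d$ preimages of $b'$. Going around $\uc$, preimages of $a'$ and $b'$ alternate, because each arc of $\uc$ of length $1/d$ between consecutive preimages of $a'$ contains exactly one preimage of $b'$. Consequently every complementary arc of this set of $2d$ points is mapped by $\si_d$ homeomorphically and positively either onto $(a',b')$ or onto $(b',a')$, and the two types alternate. Since the chords $\ell_i$ are pairwise disjoint and each joins an $a'$-preimage to a $b'$-preimage, a component $R$ of $\cdisk\sm\bigcup\ell_i$ has boundary made of some of the $\ell_i$ together with complementary arcs. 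I will check that all arcs on the boundary of a given $R$ are of the same type. The reason is that, moving positively along $\bd(R)$, one traverses an arc from a $b'$-preimage to an $a'$-preimage, then jumps along a leaf back to a $b'$-preimage; so each arc of $\bd(R)$ starts at a preimage of the same one of $a',b'$. In particular, the component $R_0$ adjacent to $\ol{ab}$ on the side of the arc $(b,a)$ has all its boundary arcs mapped onto $(b',a')$. This holds because its arc starting at $b$ goes from a $b'$-preimage to an $a'$-preimage, so it maps onto $(b',a')$.

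Next I will locate $c$. The leaf $\ol{ac}$ belongs to $\lam$, so it crosses none of the $\ell_i$. It cannot share an endpoint with $\ell_i$ for $i\ge2$ except at $a$, because $c$ is not a preimage of $a'$ or $b'$ (by injectivity on $\{a,b,c\}$, and since $c\ne b$). Also $c\in(b,a)$ by our choice of labeling. It follows that $\ol{ac}\sm\{a\}$ lies in a single component of $\cdisk\sm\bigcup\ell_i$, and near $a$ this component lies on the $(b,a)$ side of $\ol{ab}$, so it is $R_0$. Hence $c$ lies in an arc of $\bd(R_0)$, and by the previous paragraph $c'\in(b',a')$. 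This means $a',b',c'$ are in positive order, which contradicts the assumption $c'\in(a',b')$.

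The main obstacle is the claim that all boundary arcs of a single complementary region of the sibling collection have the same type. The subtle point is handling regions that touch several siblings, and making precise the alternation of $a'$- and $b'$-preimages along $\bd(R)$ when leaves are traversed. I would first try to prove it by induction on the number of leaves $\ell_i$ on $\bd(R)$, or else by the orientation count: the positive orientation of each arc together with the fact that each leaf reverses the ``type'' of endpoint should close the argument.
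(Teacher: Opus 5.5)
Your proof is correct. The paper gives no argument of its own for this lemma; it simply quotes \cite[Corollary 3.7]{bmov13}. Your route is the natural sibling-collection argument behind that citation: you cut $\cdisk$ by the full sibling collection of $\ol{ab}$, show that each complementary region has all its circle arcs mapped onto the same one of $(\si_d(a),\si_d(b))$ and $(\si_d(b),\si_d(a))$, and note that the leaf $\ol{ac}$ forces $c$ into the region adjacent to $\ol{ab}$ on the side of $(b,a)$.

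The step you flag as the main obstacle needs no induction. Since the siblings are pairwise disjoint, $\bd(R)$ alternates between chords and entire complementary arcs. Each such arc joins two consecutive, hence differently typed, endpoints, and each chord switches the type. So when you traverse $\bd(R)$ positively, every arc starts at a preimage of the same point, and the traversal argument you already wrote completes the proof.
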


In what follows the symbol $<$ indicates the counterclockwise (positive) circular order among points on the circle.

\begin{lem}\label{l:inficon2}
Let $\lam$ be a $\si_d$-invariant lamination. Let $C$ be a cone
of $\lam$ with periodic vertex $v$ of period $n$. Then all periodic endpoints of $C$
are of period $n$, and if $C$ is finite then all its endpoints
are periodic (of period $n$). Suppose that $C$ is infinite.
Let $\ol{va_1}, \dots, \ol{va_k}$ be all $n$-periodic leaves in $C$ labeled so that
$v=a_0<a_1<\dots<a_k<v=a_{k+1}$ and $\si_d^n(a_i)=a_i$ for each $i$.
If, for some $i$, $C'\cap (a_i, a_{i+1})\ne \0$, then one of the
following holds.

\begin{enumerate}

\item The map $\si_d^n$ moves all points of $C'\cap (a_i, a_{i+1})$
    in the positive direction within $(a_i, a_{i+1})$ until after finitely many iterations a point
    is mapped either to $a_{i+1}$ or to $v$.

\item The map $\si_d^n$ moves all points of $C'\cap (a_i, a_{i+1})$
    in the negative direction within $(a_i, a_{i+1})$ until a point
    is mapped either to $a_i$ or to $v$.

\item There exist two points $u, w\in C'$ with $a_i<u\le w<a_{i+1}$ such
that $\si_d^n(u)=\si_d^n(w)=v$, $C'\cap (u, w)=\0$, the map
$\si_d^n$ sends all points of $(a_i, u]$ in the positive direction within $(a_i, a_{i+1})$
except for those, which are mapped to $v$, and all points of $[w,
a_{i+1})$ in the negative direction within $(a_i, a_{i+1})$ except for those, which are
mapped to $v$ so that all points of $(a_i, a_{i+1})\cap C$ are eventually mapped to $v$.

\end{enumerate}

\end{lem}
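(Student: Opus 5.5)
We would first replace $\si_d^n$ by a single map $g=\si_d^n$, so that $g(v)=v$, and work with the cone $C$ of all leaves of $\lam$ issuing from $v$. For every leaf $\ol{va}\in C$ either $g(a)=v$ (the leaf is eventually collapsed) or $g(\ol{va})=\ol{vg(a)}$ is again a leaf of $C$, by forward invariance. The basic tool is Lemma \ref{l:order}, applied along the orbit $\ol{va}\mapsto\si_d(\ol{va})\mapsto\dots$ (each step is a pair of leaves with a common endpoint). For two leaves $\ol{va},\ol{vb}\in C$ with $v,a,b$ having pairwise distinct images at every step up to time $n$, it shows that $g$ preserves the circular order of $\{v,a,b\}$. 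Since $v$ is fixed by $g$, this means $g$ preserves the \emph{linear} order on the arc $\uc\sm\{v\}$ restricted to the basis $C'$, as long as no collapses occur.

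Periodic endpoints come first. Let $a\in C'$ be periodic. The leaves $\ol{vg^j(a)}$ are all in $C$ and never collapse, since distinct periodic points have distinct images. So $g$ permutes the finite set $\{g^j(a)\}$ preserving a linear order, and hence fixes it pointwise. Therefore $g(a)=a$, i.e.\ all periodic endpoints have period $n$.

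For a finite cone, the orbit of any endpoint under $g$ stays in the finite set $C'\cup\{v\}$, so it is preperiodic. The task is to rule out non-periodic endpoints, namely those eventually mapped to $v$ or eventually mapped onto a fixed endpoint. For this we would show that $g$ restricted to the cone is \emph{onto}: every leaf $\ol{vb}\in C$ has a $g$-preimage leaf in $C$. The argument is to pull back $\ol{vb}$ step by step using sibling invariance (condition \eqref{3} of Definition \ref{d:siblinv}) and Thurston gap invariance at the vertex, which is legitimate by Theorem \ref{t:siblinv}. Among the $d$ disjoint sibling pullbacks at each step, exactly one must issue from the appropriate preimage of the current vertex along the orbit of $v$; otherwise the gaps/leaves adjacent to $v$ would not map correctly near $v$. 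Given surjectivity on a finite set, $g$ is a bijection of the leaves of $C$, so there are no collapses and all endpoints are periodic. Combined with the previous paragraph, they are then fixed by $g$. I expect this surjectivity/pullback-at-$v$ step to be the \textbf{main obstacle}, since sibling invariance only provides $d$ disjoint pullbacks without saying which one is attached to $v$. I would try first the local expansion of $\si_d$ near the orbit of $v$, together with condition (3) of Definition \ref{dfn-Thurston} applied to the gaps of $\lam$ containing $v$.

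For an infinite cone, fix $i$ with $C'\cap(a_i,a_{i+1})\ne\0$. No point of this arc in $C'$ is $g$-fixed, because fixed endpoints are exactly the $a_j$. Consider a maximal run of points of $C'\cap(a_i,a_{i+1})$ that are not collapsed to $v$. By order preservation, $g$ moves each such $x$ to the same side as it moves any other such $y$ in the run, since $x<g(x)$ and $g(y)<y$ would, by monotonicity, force a fixed endpoint in between. An accumulation point of an orbit inside $(a_i,a_{i+1})$ would be a $g$-fixed point of the closed set $C'$, which is impossible; so orbits leave the arc. Leaving the arc is possible only by landing at $a_{i}$, at $a_{i+1}$, or at $v$; since order is preserved, orbits cannot jump over $a_i$ or $a_{i+1}$. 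If all points move positively we get case (1); if all move negatively we get case (2). Otherwise there is a switch from positive to negative motion, and we take $u$ to be the last point moving positively and $w$ the first point moving negatively. The region between them contains no cone points, and both $u$ and $w$ must be collapsed to $v$ by $g$; otherwise monotonicity produces a fixed point between them. Every other point then flows toward $u$ or $w$ and hence eventually maps to $v$, which gives case (3).
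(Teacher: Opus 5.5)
\textbf{The gap: backward (sibling) invariance is never actually used, and without it the lemma is false.}

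What you have is sound. Lemma~\ref{l:order} does show that $g=\sigma_d^n$ is weakly order preserving on the non-collapsed part of $C'$. Your argument that periodic endpoints are $g$-fixed also works. To get period exactly $n$, rather than a divisor of $n$, apply the same argument to the cone at $a$, which contains $\overline{av}$.

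\textbf{Finite case.} You correctly see that you need surjectivity of $g$ on $C$, but you leave it open, and the reason you offer is not the right one. Surjectivity is in fact immediate from Definition~\ref{d:siblinv}. Let $\overline{xb}$ be a non-degenerate leaf with $\sigma_d^n(x)=x$.
\begin{enumerate}
\item Conditions (2) and (3) give $d$ pairwise disjoint leaves mapping onto $\overline{xb}$.
\item Each of them has exactly one endpoint in the $d$-point set $\sigma_d^{-1}(x)$.
\item Disjointness forces these endpoints to exhaust $\sigma_d^{-1}(x)$, so one of the leaves issues from $\sigma_d^{n-1}(x)$.
\end{enumerate}
Repeating this $n$ times along the orbit of $v$ gives, for every non-degenerate $\overline{vb}\in C$, a leaf $\overline{vb'}\in C$ with $g(b')=b$. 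Your counting argument then finishes the finite case.

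\textbf{Infinite case.} Here the omission is fatal. Your key claim is that all non-collapsed points of a run move to the same side ``by monotonicity''. This fails for the repelling configuration $y<x$, $g(y)<y$, $g(x)>x$, where order preservation produces no fixed point.

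Take $d=3$ and $v=0$, and the chords $\overline{0x}$ with
\[
x\in\{3^{-m}\}_{m\ge 1}\cup\{1-3^{-m}\}_{m\ge 1}\cup\{\tfrac{4}{9},\tfrac{5}{9}\}.
\]
This family is closed and forward invariant, and $\sigma_3$ preserves circular order on every triple $\{0,a,b\}$ where it is injective. No endpoint is fixed. Yet $\frac{4}{9}\mapsto\frac{1}{3}$ moves left while $\frac{5}{9}\mapsto\frac{2}{3}$ moves right, with no cone point between them, so none of (1)--(3) holds. The family is not backward invariant. Every property your argument invokes holds here, so the argument cannot be completed without surjectivity.

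\textbf{How to repair it.} With surjectivity the fix is short.
\begin{itemize}
\item Let $x\in C'\cap(a_i,a_{i+1})$ with $g(x)>x$. Its backward orbit $x=x_0,x_1,x_2,\dots$ in $C'$ is strictly decreasing by monotonicity.
\item By closedness of $C'$ and continuity of $g$, it converges to a $g$-fixed point, which must be $a_i$.
\item Suppose some non-collapsed $y<x$ had $g(y)<y$. Choose $m$ with $x_m<y\le x_{m-1}$. Then $x_{m-1}=g(x_m)\le g(y)<y\le x_{m-1}$, a contradiction.
\end{itemize}

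Two further points need repair.
\begin{itemize}
\item $u$ and $w$ must be taken to be the collapsed points at the switch. Your $u$, ``the last point moving positively'', cannot satisfy $g(u)=v$.
\item The assertion that every point of the arc eventually reaches $v$ is not proved. Orbits may jump across $[u,w]$. Your remark that accumulation points of orbits are fixed holds only for monotone orbits.
\end{itemize}
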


\section{Convergence of Side laminations}\label{s:main}

We start by discussing some properties of invariant laminations containing leaves $\ol{0\frac 13}$, $\ol{0\frac 23}$.

\begin{lem}\label{l:only} Suppose that $\lam'$ is a cubic invariant lamination with
 $\ol{0\frac 13},\ol{0\frac 23}\in\lam'$
and $\ell\in\lam'$ is a leaf. Then the following holds.

\begin{enumerate}

\item If $\ell$ is disjoint from $0\in\uc$ then the three pullbacks of $\ell$ in $\lam'$
are unique. % (hence there is a unique full sibling collection of leaves in %$\lam'$ whose image is $\ell$).

\item Any leaf $\ell\in\lam'$ eventually maps either to $0$ or to the leaf $0\frac12$.

\end{enumerate}

\end{lem}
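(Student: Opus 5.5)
The plan rests on one observation: the two leaves $\ol{0\frac13}$ and $\ol{0\frac23}$ cut the closed disk into three regions. Call them $R_1$ (bounded by $\ol{0\frac13}$ and the arc $[0,\frac13]$), $R_2$ (bounded by $\ol{0\frac23}$ and the arc $[\frac23,1]$), and the middle region $R_0$, which contains the arc $[\frac13,\frac23]$. Each of the three closed arcs has length $\frac13$, so $\si_3$ maps it onto the whole circle, injectively except at its endpoints. Since $\lam'$ is a lamination, every leaf lies in the closure of one of these regions. The proof then works through pullbacks, using the sibling invariance from Definition \ref{d:siblinv}.

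For (1), let $\ell=\ol{xy}$ with $x,y\ne0$ be a non-degenerate leaf; if $\ell$ is degenerate the claim is trivial. Each point $x\ne 0$ has exactly one $\si_3$-preimage in each of the open arcs $(0,\frac13)$, $(\frac13,\frac23)$, $(\frac23,1)$, and none at the vertices $0,\frac13,\frac23$. A pullback leaf of $\ell$ cannot cross the two given leaves. So both of its endpoints lie on the boundary arc of a single region $R_j$, and they must be the preimages of $x$ and $y$ in that arc. The middle region needs care, because its boundary meets the circle in $[\frac13,\frac23]$ together with the point $0$. Since $0$ is not a preimage of $x$ or $y$, that case is settled too. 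Hence each region contributes at most one candidate chord, giving at most three pullbacks. Sibling invariance, condition (3) applied to any pullback supplied by condition (2), gives three disjoint pullbacks, so all three candidates occur and they are unique.

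For (2), take a non-degenerate leaf $\ell$ and consider its forward orbit. If some iterate is degenerate or contains $0$, I will argue that the orbit reaches $0$: a critical leaf collapses to a point. Here I must check what "maps to $0$" means, presumably that the leaf eventually maps to a point or to a leaf through $0$; I expect to handle leaves with endpoint $0$ via cones, Lemma \ref{l:inficon2}. Otherwise all iterates avoid $0$. Each iterate then lies in some $R_j$ with endpoints in the corresponding arc, and on that arc $\si_3$ acts as an expanding map lifting to multiplication by $3$. The arc length between the endpoints, measured inside the arc, therefore triples as long as it stays below $\frac13$. That cannot continue forever, so eventually an iterate has endpoints on the boundary of a region whose length inside the arc is at least $\frac13$. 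That means it passes near or over vertices of the regions, and the unlinking constraints leave only the diameter-type leaf $\ol{0\frac12}$, or a leaf landing at $0$.

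The main obstacle is making this last step precise. A leaf in $R_0$ with endpoints in $[\frac13,\frac23]$ has length at most $\frac13$, so the growth argument must be run with the chord length $\min(|x-y|,1-|x-y|)$. I will use that $\si_3$ triples this length when it is below $\frac16$. Leaves of length in $[\frac16,\frac13]$ inside the allowed regions must then be analysed directly. In $R_1$ and $R_2$ such a leaf maps to a leaf of length below $\frac16$ or to a leaf through $0$, while in $R_0$ the only consistent possibility is $\ol{\frac12\,0}$-type behaviour. I expect to show that a leaf of length in $[\frac16,\frac13]$ whose image again avoids $0$ forces an image crossing $\ol{0\frac13}$ or $\ol{0\frac23}$, unless it is $\ol{0\frac12}$ itself.
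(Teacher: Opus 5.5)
Your part (1) is correct, and it is the natural argument (the paper leaves it to the reader). Your split of part (2) into orbits that reach a critical leaf or a leaf through $0$, versus orbits that avoid $0$, is also the paper's split. The gap is that the branch carrying the content of the lemma is only announced (``I expect to handle leaves with endpoint $0$ via cones''). You even consider reading the conclusion as ``eventually maps to a point or to a leaf through $0$'', under which the lemma says almost nothing. ``Maps to $0$'' means maps to the point $0$, and two facts must be proved. First, a critical chord $\ol{a\,(a+\frac13)}$ with $a\notin\{0,\frac13,\frac23\}$ crosses $\ol{0\frac13}$ or $\ol{0\frac23}$. So the only critical leaves of $\lam'$ are the three edges of $\rhd$, and any leaf that becomes degenerate becomes the point $0$. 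Second, if $\ol{0x}\in\lam'$, apply Lemma \ref{l:inficon2} to the cone at the fixed vertex $0$, whose only possible fixed leaf is $\ol{0\frac12}$. It shows that $x$ is eventually mapped to $0$ or to $\frac12$. In the first case the leaf passes through $\ol{0\frac13}$ or $\ol{0\frac23}$ and collapses to $0$; in the second it lands on $\ol{0\frac12}$. This is the first half of the paper's proof, and it is the only place where the alternative $\ol{0\frac12}$ comes from.

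You instead look for $\ol{0\frac12}$ in the other branch, where it cannot occur because it has an endpoint at $0$. If no iterate of $\ell$ contains $0$, that branch is simply impossible, and your first formulation already proves it. No iterate has an endpoint at $\frac13$ or $\frac23$, since the next iterate would contain $0$. So unlinkedness puts both endpoints of each iterate in one open arc $(\frac j3,\frac{j+1}3)$. Since $\si_3$ maps such an arc increasingly and homeomorphically onto $\uc\setminus\{0\}$, the distance between the endpoints inside their arc triples at every step while staying below $\frac13$, a contradiction. This is the paper's itinerary argument in metric form, and it needs no separate treatment of $R_0$ and no switch to chord length. The detour through the threshold $\frac16$ is unnecessary, and its intermediate claims are wrong. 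A leaf of length in $[\frac16,\frac13)$ with both endpoints in one third-arc cannot exist: the endpoints of its image are $0<X<Y<1$ with $Y-X\ge\frac12$, so they lie in no common closed third-arc, and the image crosses $\ol{0\frac13}$ or $\ol{0\frac23}$. Moreover, the dichotomy you propose for $R_1$, $R_2$ (such a leaf maps to one of length below $\frac16$) would not close the argument, since lengths could then oscillate forever.
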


\begin{proof} (1) Left to the reader.

(2) Assume that $\ol{0\frac12}\in \lam'$. Consider the cone $C$ of leaves of $\lam'$ with vertex $0$.
Then by Lemma \ref{l:inficon2} the leaves $\ol{0 x}\in C$ are either eventually mapped to a critical leaf with an endpoint
$0$, or to $\ol{0\frac12}\in \lam'$. Evidently, this applies to all leaves that eventually map to a leaf of $C$. Moreover,
the only other critical leaf that can belong to $\lam'$ is $\ol{\frac13 \frac23}$. Hence any precritical leaf of $\lam'$
eventually maps to $0$. Thus, if a leaf is precritical or eventually maps to a leaf with and endpoint $0$,
then the claim of the lemma holds.

Suppose now that $\ell\in \lam'$ is a leaf that is not precritical and never maps to a leaf with an endpoint $0$.
Since eventual images of $\ell$ cannot cross $\ol{0\frac13}$ or $\ol{0\frac23}$
then the endpoints of all eventual images of $\ell$ belong either to $(0, \frac13)$, or to $(\frac13, \frac23)$, or to $(\frac23, 0)$.
However this is impossible because under our assumptions both endpoints of $\ell$ have the same itinerary with respect to the arcs
$(0, \frac13)$, $(\frac13, \frac23)$, $(\frac23, 0)$ which implies that they have to coincide.

A similar argument shows that the same holds if $\ol{0\frac12}\notin \lam'$ (with the obvious correction that
then all leaves eventually map to $0$).
\end{proof}

From now on, we consider the following Basic Setup.

\medskip

\noindent\textbf{Basic Setup.} \emph{Consider a sequence of Side q-laminations $\lam_n$
converging to an invariant lamination $\lam$.
Let $C_n$, $D_n$ be the critical sets of $\lam_n$, and suppose that
some critical chords $\oc_n\subset C_n$, $\od_n\subset D_n$ converge to $\oc_\circ=\ol{0\frac 13}$, $\od_\circ=\ol{0\frac 23}$, respectively.
Also, assume that $\Pc_n$ are critical portraits compatible with $\lam_n$ and converging to
a critical portrait $\Pc$.
Denote the triangle $\ch(\oc_\circ\cup\od_\circ)$ by $\rhd$.
Assume that $C_n\to C$ and $D_n\to D$.}

\medskip

Recall that a lamination is \emph{unicritical} if it has a unique critical set
(thus, in the cubic case, the critical set is invariant %with respect to
under rotation by $\frac13$).
We assume that $C_n=D_n$ only if $\lam$ is unicritical.

\begin{lem}
\label{l:lam-uni}
If $\hlam$ is unicritical, nonempty and compatible with $\oc_\circ$, $\od_\circ$, then $\rhd$ is a gap of $\hlam$.
\end{lem}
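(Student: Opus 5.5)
The plan is to identify the unique critical set $K$ of $\hlam$ and show that $K=\rhd$. Once this holds, the edges $\oc_\circ$, $\od_\circ$, $\ol{\frac13\frac23}$ of $\rhd$ are edges of a critical gap and hence leaves, and $\rhd$ is a gap of $\hlam$.

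\textbf{Step 1: rotation symmetry.} Since $\hlam$ is unicritical, $K\cap\uc$ is invariant under the rotation $\rho:\al\mapsto\al+\frac13$. I will first show that $\hlam$ itself is $\rho$-invariant. Take a non-critical leaf $\ell$ with $\si_3(\ell)$ non-degenerate. Its full sibling collection (Definition \ref{d:siblinv}(3)) consists of three disjoint leaves with the same image as $\ell$. Since there is only one critical set, all of it lies in one gap, and the complement of $K$ splits into three ``thirds'' permuted by $\rho$. I expect the three siblings to be $\ell,\rho(\ell),\rho^2(\ell)$, arguing in the spirit of Lemma \ref{l:only}(1), where pullbacks are unique once the critical data are fixed. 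Consequently $\hlam$ is compatible not only with $\oc_\circ,\od_\circ$ but also with the third edge $\ol{\frac13\frac23}=\rho(\oc_\circ)$ of $\rhd$.

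\textbf{Step 2: $\rhd\subset K$.} The set $K$ is non-degenerate and $\rho$-invariant, so it contains the center of $\disk$. Pick $x\in K\cap\uc$ and suppose $x\notin\{0,\frac13,\frac23\}$, say $x\in(0,\frac13)$ after applying $\rho$. Then $x+\frac13\in K$ lies in $(\frac13,\frac23)$. The chord $\oc_\circ$ separates $x$ from $x+\frac13$, so it meets the interior of the convex set $K$. A chord that crosses no leaf and meets the interior of a gap must lie inside that gap with both endpoints in its basis. Hence $0,\frac13\in K$, and by symmetry $\frac23\in K$, so $\rhd\subset K$. If $K\cap\uc\subset\{0,\frac13,\frac23\}$, then $K\cap\uc=\{0,\frac13,\frac23\}$ by $\rho$-invariance, and $K=\rhd$ directly.

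\textbf{Step 3: $K=\rhd$.} This is the step I expect to be the main obstacle. Suppose $K\supsetneq\rhd$. Since $K\cap\uc$ is $\rho$-invariant and $\si_3\circ\rho=\si_3$, $K$ is infinite (or has at least six vertices) and maps onto $\si_3(K)$ with degree $3$. Every hole $(a,b)$ of $K$ not collapsed by $\si_3$ maps to a hole of $\si_3(K)$ of triple length (Definition \ref{d:hole}). The three $\rho$-images of a hole have the same image, so the image hole would be covered three times, which is impossible for a proper hole. Hence every hole of $K$ has length at most $\frac13$ with collapsed endpoints, and the total length of the holes of $K$ is forced to vanish. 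I would formalize this with the semiconjugacy of Lemma \ref{l:fatou} applied to the periodic image of $K$, which must be an invariant degree-$3$ gap containing the fixed point $0$. Either way $K$ is the whole closed disk, so $\hlam$ has no non-degenerate leaves, contradicting the assumption that $\hlam$ is nonempty. Therefore $K=\rhd$, its edges are leaves of $\hlam$, and $\rhd$ is a gap of $\hlam$.
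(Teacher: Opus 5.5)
The gap is in Step 3. Steps 1 and 2 are essentially fine. The paper also uses the $\rho$-invariance of a unicritical $\hlam$ without proof. You can get $\rhd\subset K$ more directly: a critical chord compatible with $\hlam$ lies in a critical set, and there is only one.

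\textbf{The hole count is wrong.} Three holes of $K$ mapping onto one hole of $\si_3(K)$ is exactly what degree $3$ means. For example, the holes $(x,y)$, $(x+\frac13,y+\frac13)$, $(x+\frac23,y+\frac23)$ of a $\rho$-invariant hexagon $\ch(\{x,y,x+\frac13,y+\frac13,x+\frac23,y+\frac23\})$ all map onto the arc $(3x,3y)$. Your conclusion is also backwards: since $0,\frac13,\frac23\in K$ and $K\ne\rhd$, every hole of $K$ is shorter than $\frac13$, so no hole is collapsed.

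\textbf{Invariance of $K$ is never established.} You assert that ``the periodic image of $K$'' is an invariant degree-$3$ gap containing $0$. Nothing in your argument gives $\si_3(K)=K$, or even that $K$ is periodic. The set $\si_3(K)$ is only some gap or leaf containing the fixed point $0$. If $\hlam$ has leaves $\ol{0x}$, it could a priori be a different, non-critical gap in the fan at $0$. Proving $\si_3(K)=K$ is the real content of the lemma.

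\textbf{The missing idea} is the paper's case (1), which rules out leaves $\ol{0x}$ with $x\notin\{0,\frac13,\frac23\}$. When $K\ne\rhd$, no edge of $\rhd$ is a leaf. Compatibility with $\ol{\frac13\frac23}$ then puts $x$ in $(0,\frac13)$, after applying $\tau$ if needed. Iterating $\si_3$, you may assume $x\in(\frac29,\frac13)$. Then $\si_3(\ol{0x})=\ol{0\,3x}$ satisfies $\frac23<3x<\frac23+x$, so it crosses the leaf $\rho^2(\ol{0x})=\ol{\frac23\,(\frac23+x)}$. This is where the rotation invariance from your Step 1 is really needed.

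Once there are no leaves at $0$, $K$ is the only gap containing $0$. Also $\si_3(K)$ contains $0$ and is neither a point (since $K\ne\rhd$) nor a leaf at $0$, so $\si_3(K)=K$.

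\textbf{Finishing.} Do not count total hole length: each image hole $h'$ is covered by three holes of length $|h'|/3$, so no contradiction arises that way. Instead take a longest hole $h$ of $K$. It is shorter than $\frac13$, so it maps onto a hole of $\si_3(K)=K$ of length $3|h|>|h|$, a contradiction. Hence $K=\ol\disk$, contradicting nonemptiness.
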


\begin{proof}
Let $C$ be the unique critical set of $\hlam$; then $\rhd\subset C$.
We claim that $\rhd=C$.
Indeed, by way of contradiction, suppose that $\rhd\subsetneqq C$.
Then no edge of $\rhd$ is a leaf of $\hlam$ as otherwise the fact that
$\hlam$ is unicritical would imply that $\rhd=C$.
Consider two cases.

(1) There exist leaves $\ell=\ol{0 x}\in \hlam$. Without loss of generality we may assume $0<x<\frac13$. Applying $\si_3$ a few times if necessary
we may assume that $\frac19\le x<\frac13$.
Since no edge of $\rhd$ is a leaf of $\hlam$ and no leaf of $\hlam$ can cross $\ol{\frac13 \frac23}$
then we may assume that $\frac29<x<\frac13$.
Hence $\frac23<3x<\frac23+x$.
However, this implies that the leaf $\si_3(\ell)$ crosses the leaf $\ol{\frac23 (\frac23+x)}$ (the latter is a leaf of
$\hlam$ because $\hlam$ is unicritical), a contradiction.

(2) There are no leaves with endpoint $0$ in $\hlam$.
Note that $C$ is then an infinite invariant gap of degree 3, which implies that $C=\ol\disk$,
a contradiction with $\hlam$ being nonempty.
\end{proof}

\begin{lem}\label{l:uni3}
Assume the Basic Setup and that all $\lam_n$ are \emph{unicritical}.
Then $\rhd$ is a gap of $\lam$, and $\Pc$ consists of two edges of $\rhd$. %$\ch(\oc_\circ\cup\od_\circ)$.
\end{lem}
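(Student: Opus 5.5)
We need to prove: under the Basic Setup with all $\lam_n$ unicritical, $\rhd$ is a gap of $\lam$ and $\Pc$ consists of two edges of $\rhd$.

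Since $\lam_n$ is unicritical, $C_n=D_n$ is a single critical set, invariant under rotation by $\frac13$, containing both $\oc_n$ and $\od_n$. The first step is to pass to the limit. The limit $C=D$ is a convex hull of a closed subset of $\uc$, invariant under rotation by $\frac13$, and it contains $\oc_\circ\cup\od_\circ$, hence contains $\rhd$. Rotation invariance of $C_n$ means that $C_n$ contains, together with each critical chord, its rotated siblings. The limit lamination $\lam$ is a sibling invariant lamination by Theorem~\ref{t:siblinv}, and it is compatible with $\oc_\circ,\od_\circ$, since these are limits of chords compatible with $\lam_n$.

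The second step is to produce a unicritical lamination to which Lemma~\ref{l:lam-uni} applies. The natural choice is to take the limit of the $\lam_n$ and argue that the limit structure is unicritical. One option is to apply Lemma~\ref{l:lam-uni} directly to $\lam$. If $\lam$ is not obviously unicritical, I would instead run the argument of Lemma~\ref{l:lam-uni} on $\lam$ itself, using that all edges of $C_n$ are limits of leaves whose rotations by $\frac13$ are also edges of $C_n$. I would also need $\lam$ to be nonempty. This should follow because $\lam_n$ is a Side lamination, hence not the empty lamination, and the edges of $C_n$ have length bounded below: they converge to edges of a nondegenerate set containing $\rhd$, so the limits are nondegenerate leaves.

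The core step is to show $C=\rhd$. I would follow case (1) of Lemma~\ref{l:lam-uni}. Suppose $C\supsetneq\rhd$; then some edge of $C$ (a leaf of $\lam$) is not an edge of $\rhd$. Rotation invariance then gives a leaf $\ol{0x}$, or a leaf near $0$, which forward-images across a rotated leaf, giving a contradiction. If there are no leaves at $0$, then $C$ is an infinite gap of degree 3, which forces $C=\cdisk$, contradicting nonemptiness. Hence $C=\rhd$, so its edges $\ol{0\frac13}$, $\ol{\frac13\frac23}$, $\ol{\frac23 0}$ are limits of edges of $C_n$ and therefore leaves of $\lam$. Since no leaf of $\lam$ enters the interior of $\rhd$ (such a leaf would have to be a limit of chords inside $C_n$, which are not leaves), $\rhd$ is a gap of $\lam$.

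Finally, the critical portraits $\Pc_n$ consist of two critical chords inside $C_n$, as compatibility with $\lam_n$ forces critical chords to lie in critical sets. Hence $\Pc\subset C=\rhd$. The only critical chords inside $\rhd$ are its three edges, and two unlinked distinct ones must be chosen, so $\Pc$ consists of two edges of $\rhd$.

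The main obstacle is justifying that the limit $C$ is rotation invariant and that the Lemma~\ref{l:lam-uni} argument transfers. In particular, one must ensure the rotated leaves used in the crossing contradiction are genuinely leaves of $\lam$; I would obtain them as limits of rotated edges of $C_n$.
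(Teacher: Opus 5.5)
Your overall route matches the paper's. Its entire proof is that the claim ``follows immediately from Lemma~\ref{l:lam-uni}'' applied to the limit lamination $\lam$. You correctly identify what that application needs: compatibility with $\oc_\circ,\od_\circ$, invariance under rotation by $\frac13$, and nonemptiness. Your final step $\Pc\subset C=\rhd$ is also fine. \textbf{Your justification of nonemptiness, however, does not work.} There is no reason for all edges of $C_n$ to have length bounded below: a critical hexagon with two close vertices near each of $0,\frac13,\frac23$ has three short edges. Saying the edges ``converge to edges of a nondegenerate set containing $\rhd$'' is circular. The scenario you must exclude is exactly that the vertices of $C_n$ become dense, all edges shrink, $C=\cdisk$ and $\lam$ is empty, in which case the conclusion fails. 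That $\lam_n$ is nonempty says nothing about the limit. The missing idea is as follows. Classes of $\lam_n$ are finite, and $C_n\cap\uc=\si_3^{-1}(\si_3(C_n)\cap\uc)$ has three times as many points as its image, so $\si_3(C_n)$ is a different class. Its hull is therefore disjoint from $C_n$, so it lies in a single hole $h$ of $C_n$, of length at most $\frac13$. Then $C_n\cap\uc\subset\si_3^{-1}(h)$, which is three arcs of length at most $\frac19$. Hence $C_n$ has three holes of length at least $\frac29$, and their edges converge to nondegenerate leaves of $\lam$.

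\textbf{Your plan for the crossing step is also insufficient as stated.} Limits of rotated edges of $C_n$ only show that $C$ is invariant under $t\mapsto t+\frac13$. The argument of Lemma~\ref{l:lam-uni} instead rotates the iterated leaf $\si_3^j(\ell)$, which in general is not an edge of $C$. If you do not iterate, $\si_3(\ol{0x})$ crosses $\ol{\frac23\,(\frac23+x)}$ only when $x>\frac29$. There are two ways to close this.

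First, you can prove that each unicritical $\lam_n$ is rotation invariant as a whole. Take a class $\g\ne C_n\cap\uc$ lying in a hole $h$ of $C_n$. The set $\si_3^{-1}(\si_3(\g))=\g\cup(\g+\frac13)\cup(\g+\frac23)$ splits into classes mapped bijectively onto $\si_3(\g)$, each inside one hole of $C_n$. Its points in $h+\frac13$ are exactly $\g+\frac13$, so $\g+\frac13$ is a class. This invariance passes to $\lam$, and the paper's argument then runs verbatim.

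Second, you can argue in one step. If $\ol{0x}$ is an edge of $C$ with $0<x<\frac13$, then $x<3x<x+\frac23<1$. So the leaf $\ol{0\,3x}$ separates the vertices $x$ and $x+\frac23$ of $C$ and crosses the interior of $C$, which no leaf of $\lam$ can do.
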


\begin{proof}
The claim follows immediately from Lemma \ref{l:lam-uni}.
\end{proof}

Since $\lam_n$ are Side laminations, $0$ does not belong to their critical sets' boundaries.
By Lemma \ref{l:uni3} we may assume that all $\lam_n$ are not unicritical.

Note that $\oc_n$ and $\od_n$, with $n$ large, cannot simultaneously cross the horizontal
 diameter as then they would cross each other.

\begin{dfn}[Passive]
Passing to a subsequence, we may assume that either all $\oc_n$ are above the horizontal diameter,
or all $\od_n$ are below the horizontal diameter.
In the former case, say that $\oc_n$ are \emph{passive},
and, in the latter case, that $\od_n$ are \emph{passive}.
\end{dfn}

Recall that $T^0$ is the invariant caterpillar gap with edge $\oc_\circ$ lying above $\oc_\circ$.

\begin{lem}
\label{l:T0}
Assume the Basic Setup.
If all $\oc_n$ are passive, then $T^0$ is a gap of $\lam$,
 in particular, $\oc_\circ\in\lam$.
\end{lem}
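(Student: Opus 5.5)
The plan is to show that the three kinds of edges of $T^0$ — namely $\oc_\circ=\ol{0\frac13}$ and the leaves $\ell_k=\ol{\frac1{3^{k+1}}\frac1{3^k}}$ for $k\ge 1$ — all belong to $\lam$, and then that no leaf of $\lam$ enters the interior of $T^0$. Passivity means $\oc_n=\ol{a_n\,(a_n+\frac13)}$ with both endpoints in $[0,\frac12]$. Since $\lam_n$ is Side, $a_n\ne 0$, so $a_n\to 0^+$. Thus $0$ lies strictly outside the arc $[a_n,a_n+\frac13]$, just clockwise of $a_n$. The triangle $\rhd$ plays no direct role here.

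\emph{Step 1: $\oc_\circ\in\lam$.} Since $0\notin C_n$, the point $0$ lies in some hole $(x_n,y_n)$ of $C_n$, with $x_n\ge a_n+\frac13$ and $y_n\le a_n$. Hence $y_n\to 0$, and the edge $e_n=\ol{x_ny_n}$ is a leaf of $\lam_n$. Passing to a subsequence, $e_n\to e\in\lam$ with $e$ having endpoint $0$. The goal is to show $x_n\to\frac13$. Suppose instead that $x_n\to x>\frac13$. Then the hole $(x_n,y_n)$ has length bounded above by $\frac23-\eps$, because $C_n$ is critical and contains a chord of length $\frac13$. Now apply $\si_3$: a hole of a critical gap-leaf maps onto a hole of the image $\si_3(C_n)$, and $\si_3(e_n)$ has endpoints $3x_n$ and $3y_n\approx 0$. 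I will compare this with the non-crossing conditions against $\od_n$ (whose endpoints are near $0$ and $\frac23$, and which is compatible with $\lam_n$) and against the siblings of $e_n$. The aim is to show that, unless $x\in\{\frac13\}$, some image or sibling of $e_n$ crosses $\oc_n$ or $\od_n$. This is the same style of argument as case (1) of Lemma \ref{l:lam-uni}: push $x$ by $\si_3$ into a region where the image leaf crosses a leaf forced by criticality. I expect this step to be the main obstacle. In particular, the possibility $x$ near $\frac23$ (with $e$ close to $\od_\circ$) has to be excluded using that $D_n\ne C_n$ is a separate critical set lying in the hole $(x_n,y_n)$. I would first try to show that $D_n$ must then be squeezed between $e_n$ and $\od_n$ in a way that makes $\si_3(D_n)$ and $\si_3(C_n)$ linked.

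\emph{Step 2: the leaves $\ell_k$.} Once $\oc_\circ\in\lam$, note that $\ol{0\frac23}$ is a limit of chords compatible with $\lam_n$, so it is compatible with $\lam$. The argument of Lemma \ref{l:only} then applies to the relevant leaves. I will produce the leaf $\ell_1=\ol{\frac19\frac13}$ as a limit of edges of $C_n$ as well, namely the edge adjacent to the hole containing $\frac13$ on the side of $\oc_n$ facing the upper half-disk. Its $\si_3$-image is $\oc_\circ$ collapsed appropriately, and non-crossing with $\oc_\circ$ together with sibling invariance (Definition \ref{d:siblinv}) pins its endpoints at $\frac19$ and $\frac13$. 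Then $\ell_{k+1}$ is obtained from $\ell_k$ by pulling back. Since $\ell_k$ is disjoint from $0$, Lemma \ref{l:only}(1) says its three pullbacks in $\lam$ are unique. The pullback inside the arc $(0,\frac13)$ must be $\ell_{k+1}$, because any other choice in that arc would cross $\oc_\circ$ or $\ell_k$. Induction gives all $\ell_k\in\lam$.

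\emph{Step 3: $T^0$ is a gap.} Any leaf of $\lam$ meeting the interior of $T^0$ has endpoints in one of the arcs $[\frac1{3^{k+1}},\frac1{3^k}]$, or it has endpoint $0$. Leaves of the first kind lie outside $T^0$ by convexity. A leaf $\ol{0z}$ with $z\in(0,\frac13)$ not of the form $3^{-k}$ would map under iterates of $\si_3$ to a leaf $\ol{0w}$ with $w\in(\frac19,\frac13)$. Such a leaf crosses $\ell_1$ or has image crossing $\oc_\circ$, as in Lemma \ref{l:lam-uni}(1). Hence $T^0$ is a gap of $\lam$, and in particular $\oc_\circ\in\lam$.
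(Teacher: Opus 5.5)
There is a genuine gap. It sits where you suspected, but it goes beyond Step 1.

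\textbf{Steps 1 and 2 do not go through.} Step 1 is only a plan. Nothing you write forces the edge $e_n=\ol{x_ny_n}$ of $C_n$ facing $0$ to converge to $\ol{0\frac13}$, and the intended contradiction via $\od_n$ and $D_n$ is never produced. Note also that the lemma uses only passivity of $\oc_n$, so $D_n$ is a distraction here.

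Step 2 then fails outright, because the critical sets $C_n$ carry no information about the small edges of $T^0$. In the realizing examples of Proposition~\ref{p:limlam1}, $C_n=\oc_n$ is a single critical leaf. So the edge of $C_n$ next to the hole containing $\frac13$ is $\oc_n$ itself, and it converges to $\oc_\circ$, not to $\ol{\frac19\frac13}$. (Also, $\si_3(\ol{\frac19\frac13})=\oc_\circ$; nothing is collapsed.)

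Nor can you get $\ell_1$ by pulling back $\oc_\circ$. Since $\oc_\circ$ has endpoint $0$, Lemma~\ref{l:only}(1) does not apply. Sibling invariance only tells you that either $\{\ol{0\frac19},\ol{\frac13\frac49},\ol{\frac23\frac79}\}$ or $\{\ol{\frac19\frac13},\ol{\frac49\frac23},\ol{\frac79 0}\}$ lies in $\lam$. Excluding the first option (whose leaf $\ol{0\frac19}$ is a diagonal of $T^0$) is a substantial part of what the lemma asserts. So your induction never starts, and Step 3 has nothing to stand on, although it is fine once all edges of $T^0$ are known to be leaves.

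\textbf{The missing idea.} Instead of studying critical sets, produce an \emph{invariant} object of $\lam_n$ on the passive side of $\oc_n=\ol{a_nb_n}$. The arc $[a_n,b_n]$ maps onto $\uc$, and by passivity ($0<a_n<b_n<\frac12$) it contains no fixed point of $\si_3$.

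The paper observes that the piecewise linear extension of $\si_3$, restricted to the part $A$ of the disk cut off by $\oc_n$ and containing $[a_n,b_n]$, is a degree-one weakly polynomial-like map in the sense of Goldberg--Milnor. Hence it has a fixed point in the interior of $A$. That point lies in a finite invariant rotational gap or leaf $G_n$ of $\lam_n$ with vertices in $[a_n,b_n]$; this also follows from standard facts about q-laminations.

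Now pass to the limit. The set $X=\lim(G_n\cap\uc)$ is closed, lies in $[0,\frac13]$, and satisfies $\si_3(X)=X$. Two facts pin it down:
\begin{itemize}
\item $\si_3$ sends $(\frac19,\frac13)$ out of $[0,\frac13]$;
\item a periodic orbit cannot stay in a small neighborhood of the repelling point $0$.
\end{itemize}
Together these force $X=\{0\}\cup\{3^{-k}\}_{k\ge1}$, so $\lim G_n=T^0$. Its edges are limits of edges of $G_n$, hence leaves of $\lam$, and no leaf of $\lam$ can enter its interior. This yields all edges of $T^0$ at once, including $\oc_\circ$.
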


\begin{proof}
Consider $\oc_n=\ol{a_nb_n}$ for large $n$; we may assume that $0<a_n<b_n<\frac 12$.
Recall that there is a piecewise linear extension $s_n$ of $\si_3$ to $\ol\disk$ that takes leaves and gaps of $\lam_n$ to leaves and gaps.
The map $s_n$ takes the part $A$ of $\ol\disk$ above $\oc_n$ onto the entire disk and is
in fact a weakly polynomial-like map of degree one in the sense of \cite{GM93} with isolated fixed points
of positive Lefschetz indices.
By Lemma 3.7 of \cite{GM93}, there is a unique fixed point of $s_n$ in the interior of $A$;
this fixed point must necessarily lie in a finite invariant rotational gap or leaf $G_n$ of $\lam_n$
(here, we use that $\oc_n$ are passive, therefore, there are no fixed points
of $s_n|_A$ on the unit circle). Alternatively, the existence of a gap $G_n$ follows from well-known properties
of q-laminations.
Clearly, the limit of $G_n$ is an invariant gap of $\lam$ of degree one with a vertex at $0$.
This can only be a caterpillar gap that is non-strictly above $\oc_\circ$, and the only possibility is $T^0$.
\end{proof}

Consider the case when the horizontal diameter is a leaf of $\lam$.

\begin{lem}\label{l:12}
Assuming the Basic Setup, suppose also that $\ol{0 \frac12}\in \lam$.
Then $C=\oc_\circ$ and $D=\od_\circ$.
\end{lem}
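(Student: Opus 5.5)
The plan is to prove the statement by contradiction, using the rigidity of periodic leaves. First, $\ol{0\frac12}$ is a fixed leaf of $\lam$: its endpoints $0$ and $\frac12$ are both fixed by $\si_3$. By Lemma \ref{l:rigid}, every q-lamination sufficiently close to $\lam$ has $\ol{0\frac12}$ as a leaf, so $\ol{0\frac12}\in\lam_n$ for all large $n$.

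Next, the critical chords $\oc_n$, $\od_n$ must not cross $\ol{0\frac12}$ and, since $\lam_n$ is Side, must not have the periodic endpoint $0$. They converge to $\ol{0\frac13}$ (above the diameter) and $\ol{0\frac23}$ (below the diameter). Hence for large $n$ the chord $\oc_n$ lies in the upper half-disk and $\od_n$ in the lower half-disk. In particular both sequences are passive in the appropriate sense, so Lemma \ref{l:T0} and its reflection under $\tau$ give that $T^0$ and $\tau(T^0)$ are gaps of $\lam$. Thus $\oc_\circ,\od_\circ\in\lam$.

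Now $C$ is the limit of the convex sets $C_n\supset\oc_n$ and contains $\oc_\circ$. Since $C_n$ lies in the upper closed half-disk, does not cross $\ol{0\frac12}$, and is a gap-leaf of a q-lamination, $C\cap\uc\subset[0,\frac12]$. Suppose $C\ne\oc_\circ$. Then $C$ has a vertex $x\notin\{0,\frac13\}$ with $x\in(0,\frac12]$. The set $C$ cannot cross the leaves of $\lam$, in particular not the edges of $T^0$, which accumulate at $0$ from the arc $(0,\frac13)$. Therefore $C$ lies on the side of $\oc_\circ$ opposite to $T^0$, so $x\in(\frac13,\frac12]$.

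Consider the image $\si_3(C)$. The vertex $x$ maps to $3x\in(0,\frac12]$, modulo the arc structure. The set $\si_3(C)$ is a limit of the images $\si_3(C_n)$, which are gap-leaves, hence it is compatible with $\lam$. It contains $0=\si_3(0)=\si_3(\frac13)$ and $3x$. So $\ol{0\,3x}$ must be compatible with $\lam$, meaning it does not cross edges of $T^0$ or $\tau(T^0)$ or the leaf $\ol{0\frac12}$. Moreover, Lemma \ref{l:only}(2) forces every leaf of $\lam$ through $0$ to eventually map to $0$ or to $\ol{0\frac12}$.

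The hard step is to rule out such $x$. The intended route uses the dynamics on the cone at $0$, via Lemma \ref{l:inficon2}. The image $\si_3(C)$ is a limit of images of critical sets, which are non-critical gap-leaves of $\lam_n$ near $0$. Their pullbacks in the upper half-disk lie between $T^0$ and the diameter. Since $\ol{0\frac12}$ is a fixed leaf and $0$ is a repelling fixed point, the images $\si_3(C_n)$ near the cone at $0$ are repelled. For large $n$, an iterate of $C_n$ would then cross $\ol{0\frac12}$ or $\oc_n$. Concretely, I would check that the arc $(\frac13,\frac12)$ is mapped by $\si_3$ onto $(0,\frac12)$, so the points of $C_n$ near $x$ have images whose chords with points near $0$ cross edges of $T^0$ unless $3x\in\{\frac13\}\cup[\frac13,\frac12]$. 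Iterating this shows $x$ must be a point whose whole forward orbit stays in $[\frac13,\frac12]$. The only such point is the fixed point $\frac12$.

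It remains to exclude $x=\frac12$. In that case $C$ contains $0$, $\frac13$ and $\frac12$. In the limit the critical set $C_n$ then contains a non-critical chord close to the invariant leaf $\ol{0\frac12}$, and this contradicts the rigidity of $\ol{0\frac12}$ in $\lam_n$: the leaf $\ol{0\frac12}$ itself separates $C_n$ from $\frac12$. So $C=\oc_\circ$. The equality $D=\od_\circ$ follows symmetrically by applying $\tau$.
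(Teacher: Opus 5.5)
Your first two steps are the same as the paper's proof. Rigidity of the fixed leaf $\ol{0\frac12}$ (Lemma \ref{l:rigid}) puts it in $\lam_n$, so both $\oc_n$ and $\od_n$ are passive. Lemma \ref{l:T0} and its $\tau$-image then give that $T^0$ and $\tau(T^0)$ are gaps of $\lam$ and that $\oc_\circ,\od_\circ\in\lam$. From there the paper simply states that $C=\oc_\circ$ and $D=\od_\circ$.

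The problem is your justification of that last step. You claim that iterating forces the whole orbit of $x$ to stay in $[\frac13,\frac12]$, and this is false. The only constraint you extract is that $\ol{0\,\si_3^j(x)}$ does not cross edges of $T^0$. That constraint disappears once $\si_3^j(x)=\frac13$, because the next image is $0$. So the points $x=\frac49,\frac{13}{27},\frac{40}{81},\dots$ pass your test. These are the backward orbit of $\frac13$ under the branch of $\si_3^{-1}$ onto $[\frac13,\frac12]$, i.e.\ the endpoints of the bottom edges $\ol{0x}$ of $T^1,T^2,\dots$, and they are also consistent with Lemma \ref{l:only}(2).

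There is a second hole. If $3x=3^{-m}$ with $m\ge 2$, then $\ol{0\,3x}$ is a diagonal of $T^0$ and crosses none of its edges. So your test does not exclude, for example, $C=T^1$, for which $\si_3(T^1)=T^0$. Finally, the case $x=\frac12$ is not settled by ``rigidity'': nothing about $\ol{0\frac12}\in\lam_n$ prevents vertices of $C_n$ from accumulating at $\frac12$ from above.

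The missing idea is to use that $C_n$ is a \emph{critical} set, rather than the dynamics of its images. Since $C_n$ is a critical set of degree two, every vertex $y_n$ of $C_n$ shares its image with another vertex $y_n\pm\frac13$ of $C_n$. Passing to the limit, every point $y\in C\cap\uc$ has $y+\frac13$ or $y-\frac13$ in $C$.

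Two location facts finish the argument. First, $\oc_\circ\in\lam$ and, as you note, no leaf of $\lam$ can cross $C$; hence $C$ lies on one side of $\oc_\circ$. Second, $C$ lies in the closed upper half-disk because every $C_n$ does. Now:
\begin{itemize}
\item a vertex $x\in(\frac13,\frac12]$ would need a sibling in $(0,\frac16]$, which is on the other side of $\oc_\circ$, or in $(\frac23,\frac56]$, which is in the lower half-disk;
\item a vertex in $(0,\frac13)$ would need a sibling in $(\frac13,\frac23)$ or in $(\frac23,1)$, which is excluded in the same way.
\end{itemize}
Hence $C\cap\uc=\{0,\frac13\}$, i.e.\ $C=\oc_\circ$, and $D=\od_\circ$ follows by symmetry. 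No iteration is needed.
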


\begin{proof}
By Lemma \ref{l:rigid}, we may assume that $\ol{0\frac12}\in\lam_n$ for all $n$.
Hence all $\oc_n$ are passive and all $\od_n$ are passive.
By Lemma \ref{l:T0} applied to $\oc_n$ we have that $T^0$ is a gap of $\lam$ and
$\oc_\circ\in\lam$. Similarly, $\tau(T^0)$ is a gap of $\lam$ and
$\od_\circ\in\lam$. This implies that $C=\oc_\circ$ and $D=\od_\circ$ as desired.
\end{proof}

Theorem \ref{t:main-lam} follows from Propositions \ref{p:limlam} and \ref{p:limlam1}.

\begin{prop}
  \label{p:limlam}
Assume the Basic Setup. Assume also that all $\oc_n$ are passive
while $D$ is a gap rather than a leaf. Then
$\lam$ coincides with $\lam^k$ or $\lam^k_\square$, for some positive integer $k$.
\end{prop}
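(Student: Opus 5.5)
By Lemma \ref{l:T0}, $T^0$ is a gap of $\lam$ and $\oc_\circ\in\lam$. Since $C\supset\oc_\circ$ is the limit of the critical sets $C_n$, and the gaps of $\lam_n$ near $T^0$ lie above $\oc_n$, I expect $C=\oc_\circ$. The plan is to show that the invariant forward-closed set $\lam\cap\uc$ contains $(T^0\cup\dots\cup T^{k-1}\cup H^k)\cap\uc$ for some $k$. Then uniqueness of the lamination with these gaps (all complementary arcs have length $<\frac13$) forces $\lam=\lam^k$, or $\lam=\lam^k_\square$ if an extra quadrilateral sits inside $H^k$.

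\textbf{Step 1: the image of $D$.} Since $D$ is a gap containing $\od_\circ$ and $\lam$ has the critical leaf $\oc_\circ$, the set $\si_3(D)$ is a gap, leaf or point of $\lam$ with vertex $0$. By Lemma \ref{l:only}(2) every leaf of $\lam$ eventually maps to $0$ or to $\ol{0\frac12}$. The latter is excluded: Lemma \ref{l:12} would give $D=\od_\circ$, contradicting that $D$ is a gap. So $\si_3(D)$ is contained in the cone at $0$ of $\lam$, or in a gap with vertex $0$. I will now describe all such gaps.

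\textbf{Step 2: the cone at $0$.} Apply Lemma \ref{l:inficon2} with $v=0$ and $n=1$. Points of $T^0\cap\uc$ move toward $0$ under $\si_3$ (case (1) or (2) of that lemma). Pull back inductively: any gap of $\lam$ adjacent to $T^{j}$ along its bottom edge and lying above the horizontal diameter must map onto $T^{j-1}$ or a subset of it. By Lemma \ref{l:only}(1), with unique pullbacks away from $0$, it then coincides with $T^{j}$. So $\lam$ contains $T^0,\dots,T^{k-1}$ for a maximal $k\ge1$, and below $T^{k-1}$ the next pullback is not realized as $T^k$.

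\textbf{Step 3: identifying $D$ and finishing.} I would show that the gap of $\lam$ below the bottom edge $E$ of $T^{k-1}$ is critical. Indeed, it must carry the second critical set, since $\od_n$ lies in $D_n$ and is not passive; if the pullback were not critical, $T^k$ would persist. This gap is therefore $D$, and $\si_3(D)\supseteq T^{k-1}$. Using that $D$ is bounded by pullbacks of edges of $T^{k-1}$ in the arc $[\frac13,1]$, I get $D=H^k$, or $D=Q^k$ when $D$ is only the quadrilateral spanned by $E$ and its sibling; the latter requires $k>1$. All remaining leaves are then eventual pullbacks of these gaps, again by Lemma \ref{l:only}(1) and (2). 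The two alternatives give $\lam^k$ and $\lam^k_\square$ respectively.

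\textbf{Main obstacle.} The hardest step is Step 3: excluding a nested $D$ strictly between $Q^k$ and $H^k$, and excluding infinitely many $T^j$ (the case $k=\infty$). For the first, I would try Side-ness of $\lam_n$ together with rigidity (Lemma \ref{l:rigid}): the approximating critical sets $D_n$ are finite and compatible. For $k=\infty$, I would argue that the $T^j$ accumulate on the horizontal diameter, which forces $\ol{0\frac12}\in\lam$ and contradicts Lemma \ref{l:12}.
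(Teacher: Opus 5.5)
\textbf{There is a genuine gap: Step 3, which is the actual content of the proposition, is asserted rather than proved.}

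Your frame matches the paper's. You get $T^0$ from Lemma \ref{l:T0} and take a maximal $k$ with $T^0,\dots,T^{k-1}$ gaps of $\lam$. The integer $k$ is finite because the bottom edges of the $T^j$ converge to $\ol{0\frac12}$, and then Lemma \ref{l:12} would make $D$ a leaf. At the end, $\lam$ is identified by uniqueness once its critical sets are known. Steps 1 and 2 are not needed for this, and Step 2 has an index slip: the gap below the bottom edge of $T^j$ maps onto $T^j$ and is a candidate for $T^{j+1}$.

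The problems are in Step 3:
\begin{itemize}
\item The sentence ``using that $D$ is bounded by pullbacks of edges of $T^{k-1}$ \dots\ I get $D=H^k$ or $D=Q^k$'' is an assertion. You then list excluding everything else as an open obstacle. The tools you suggest for it, Side-ness of $\lam_n$ and rigidity (Lemma \ref{l:rigid}), are not what settles it.
\item You claim $\si_3(D)\supseteq T^{k-1}$. This is false for $D=Q^k$, whose image is only the top edge of $T^{k-1}$.
\item The case where $Q^k$ is cut by a diagonal leaf is not treated.
\item ``This gap is therefore $D$'' needs a criticality count: $\lam$ is cubic and already has the critical leaf $\oc_\circ$.
\end{itemize}

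\textbf{The missing idea} is purely combinatorial: it uses only forward invariance of $\lam$ and the fact that $T^{k-1}$ is a gap. Take $k\ge 2$, where the quadrilateral option can occur.
\begin{itemize}
\item A leaf of $\lam$ joining two vertices of $H^k$ maps to a leaf joining two vertices of the gap $T^{k-1}$. So it is either critical or maps onto an edge $e$ of $T^{k-1}$. In the second case it joins endpoints of the two pullbacks $e',e''$ of $e$ in $H^k$.
\item Hence, critical diagonals aside, either $H^k$ is a gap, or the leaves inside $H^k$ are the edges of a single critical quadrilateral $\ch(e'\cup e'')$ cutting off two one-to-one pullbacks of $T^{k-1}$. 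Only one quadrilateral is possible, since two such quadrilaterals have crossing edges. In particular, no gap strictly between $Q^k$ and $H^k$ can occur.
\item Since $\od_\circ$ is compatible with $\lam$, the quadrilateral must have $\od_\circ$ as a diagonal. So $e$ is one of the two edges of $T^{k-1}$ at $0$.
\item If $e$ is the top edge of $T^{k-1}$, then $E$ is one of $e',e''$ and the quadrilateral is $Q^k$.
\item If $e=E$, the one-to-one pullback of $T^{k-1}$ adjacent to $E$ is $T^k$, contrary to the choice of $k$. This is exactly your observation that the gap below $E$ is not $T^k$, and it is how the paper discards the second candidate quadrilateral.
\item Every critical diagonal of $H^k$ other than $\od_\circ$ crosses $\od_\circ$. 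If $\od_\circ$ itself is a leaf, then $D=\od_\circ$, which the hypothesis excludes.
\end{itemize}
This leaves exactly $D=H^k$ or $D=Q^k$, and uniqueness then gives $\lam=\lam^k$ or $\lam=\lam^k_\square$.
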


\begin{proof}
By Lemma \ref{l:T0}, the set $T^0$ is an invariant gap of $\lam$.
If $T^i$ are gaps of $\lam$, for all $i\ge 0$, then
$\ol{0\frac 12}\in\lam$, and, by Lemma \ref{l:12}, we must have $\si_3(D)=\{0\}$, contrary to our assumption.
Hence we can choose the largest positive integer $k<\infty$ with the property that $T^0$, $\dots$, $T^{k-1}$ are gaps of $\lam$ while $T^{k}$ is not a gap
of $\lam$.

Either there is a two-to-one pullback $H^{k}$ of $T^{k-1}$ in $\lam$, or
there are two one-to-one pullbacks of $T^{k-1}$ in $\lam$.
In the former case, $D$ must coincide with $H^{k}$, hence $\lam=\lam^k$
(recall that $\lam^k$ is the unique invariant lamination with critical sets $\oc_\circ:=\ol{0\frac 13}$ and $H^k$).
In the latter case the two pullbacks of $T^{k-1}$ in $\lam$ are gaps of $\lam$ contained in $H^{k}$.
The set $H^{k}$ consists of these two gaps and a critical quadrilateral $Q$ between them.
We claim that $Q$ is the set $Q^k$ defined in Section \ref{ss:ans}, that is,
 the top edge of $Q$ is the bottom edge of $T^{k-1}$.
Firstly, our assumptions imply that $\ol{0\frac 23}$ is compatible with $Q$;
 this leaves exactly two options for $Q$, namely, critical quadrilaterals in $H^k$ for which $\ol{0\frac 23}$ is a diagonal.
Secondly, $T^k$ cannot be compatible with $Q$ as otherwise $T^k$ would be a gap of $\lam$.
This second consideration rules out one of the two options. Hence, $Q=Q^k$, as claimed.
Now, three of the four edges of $Q^{k}$ are leaves of $\lam$.
By the sibling property, the fourth edge of $Q^{k}$ must also be a leaf of $\lam$.

Suppose that $Q^k$ is a gap of $\lam$.
There is only one invariant lamination with critical sets $Q^k$ and $\oc_\circ$; this is $\lam^k_\square$;
the uniqueness of such lamination follows from the fact that all eventual pullbacks of $\oc_\circ$
in complementary components of $T^0\cup\dots\cup T^{k-1}\cup Q^k$ are uniquely defined.
Hence, $\lam=\lam^k_\square$ in this case.
Finally, it may happen that $Q^k$ is not a gap of $\lam$ but all edges of $Q^k$ are leaves of $\lam$.
In this case, $Q^k$ is the union of two gaps of $\lam$ separated by a diagonal;
the only option for $D$ in this case is $\ol{0\frac 23}$ but this option is explicitly excluded by our assumptions.
\end{proof}

As the following proposition claims, all options for $\lam$ listed in Proposition \ref{p:limlam} realize.

\begin{prop}
  \label{p:limlam1}
  Let $k$ be a positive integer.
Both $\lam^k$ and $\lam^k_\square$ can be obtained as limits of sequences of dendritic
 post-critically finite laminations.
\end{prop}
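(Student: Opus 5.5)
We will realize $\lam^k$ and $\lam^k_\square$ as limits of explicitly constructed dendritic post-critically finite q-laminations. The approach is to perturb the critical set $\oc_\circ=\ol{0\frac13}$ and the critical set $H^k$ (resp. $Q^k$) to critical chords with strictly preperiodic, non-periodic endpoints. Each perturbation is chosen so that the lamination generated by the perturbed critical portrait is dendritic and its gaps approximate $T^0,\dots,T^{k-1},H^k$ (resp. $Q^k$).

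We fix $k$ and first handle $\lam^k$. Choose $\al_n\to0^+$ with $\al_n$ strictly preperiodic under $\si_3$, landing on a repelling cycle of period much larger than $n$, and so that the chord $\oc_n=\ol{\al_n\,\al_n+\frac13}$ lies above $\oc_\circ$ (passive). Next choose a critical chord $\od_n$ inside $H^k$, compatible with $\oc_n$, whose image is a point of $T^{k-1}\cap\uc$ close to the vertex $0$; again the endpoints should be preperiodic and non-periodic. We then invoke the standard result (Thurston's pullback construction, or \cite{bopt19}) that a critical portrait with strictly preperiodic, non-periodic endpoints in general position determines a unique dendritic post-critically finite q-lamination $\lam_n$ compatible with it. Passing to a subsequence, we let $\lam_n\to\lam$.

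The key step is to identify $\lam$. By Lemma \ref{l:T0}, $T^0$ is a gap of $\lam$. We then show inductively that $T^1,\dots,T^{k-1}$ are gaps. Each is a pullback of the previous one, and since the critical chord $\od_n$ sits inside $H^k$, strictly beyond $T^{k-1}$, the pullbacks of $T^{j-1}$ for $j<k$ are one-to-one and persist in the limit; here we use Lemma \ref{l:only}(1), which gives uniqueness of pullbacks away from $0$. On the other hand, $\od_n\to$ a diagonal of $H^k$ with both edges' images $T^{k-1}$, so the limit contains a two-to-one pullback of $T^{k-1}$, namely $H^k$. Because $\lam^k$ is the unique invariant lamination having these gaps (uniqueness of pullbacks in the complementary arcs of length $<\frac13$), we get $\lam=\lam^k$.

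For $\lam^k_\square$ we choose $\od_n$ instead converging to the diagonal of $Q^k$ that is compatible with $\ol{0\frac23}$. In this case the limit has two one-to-one pullbacks of $T^{k-1}$ together with $Q^k$, and the same uniqueness argument yields $\lam=\lam^k_\square$. The main obstacle is ensuring that $T^k$ does not survive in the limit in the $\square$ case, while $H^k$ really is a gap (and not subdivided) in the first case. We will try to handle this by choosing the image $\si_3(\od_n)$ to converge to a specific vertex of $T^{k-1}$: the bottom-edge endpoint adjacent to $0$ in the $\square$ case, and the vertex $0$ itself for $H^k$. We would then verify the location of the limit leaves using rigidity of periodic leaves (Lemma \ref{l:rigid}) together with the cone structure at $0$ (Lemma \ref{l:inficon2}).
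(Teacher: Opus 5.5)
There is a genuine gap, and it is in the step where you identify the limit. The issue is your choice of the second critical chord. You take $\od_n$ to be a diagonal of the \emph{limit} gap $H^k$ whose image is a vertex $v$ of $T^{k-1}$. Every such $v$ is pre-fixed, i.e. $\si_3^j(v)=0$ for some $j$.

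In a dendritic q-lamination $\lam_n$ the class of the fixed angle $0$ is $\{0\}$ or $\{0,\frac12\}$.
\begin{itemize}
\item In the second case $\ol{0\frac12}\in\lam$. This chord is a diagonal of $H^k$, since it separates the vertices $\frac49$ and $\frac79$ of $H^k$.
\item In the first case, run backwards along $v,\si_3(v),\dots,0$. No class on this orbit can be critical. It cannot be $C_n$, because the orbit of $\si_3(\al_n)$ lands on a cycle of large period and so never meets a pre-fixed point. It cannot be $D_n$, because that would make $v$ periodic. Hence the class of $v$ is $\{v\}$, and $D_n\subset\si_3^{-1}(v)$ is just the leaf $\od_n$.
\end{itemize}

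So your $\od_n$ is itself a \emph{leaf} of $\lam_n$, and $\lim\od_n$ is a leaf of $\lam$. Yet two preimages in $H^k$ of a vertex of $T^{k-1}$ always span a diagonal of $H^k$. For $k=1$, your chords $\ol{(\frac13+3^{-m-1})(\frac23+3^{-m-1})}$ converge to $\ol{\frac13\frac23}$, which separates $\frac49$ from $\frac79$. Either way a leaf of $\lam$ cuts through $H^k$, so $H^k$ is not a gap and $\lam\ne\lam^k$. Your inference ``$\od_n$ tends to a diagonal of $H^k$, hence the limit contains $H^k$'' therefore runs the wrong way.

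For $k\ge 2$ the prescription cannot even be carried out. All vertices of $T^{k-1}$ other than $0$ lie in $[\frac13,\frac12)$, so $0$ is isolated in $T^{k-1}\cap\uc$. The only admissible choice would then be $\od_n=\ol{0\frac23}$, which has a fixed endpoint. The same objections apply to your $\square$ case, where $\od_n\to\ol{0\frac23}$.

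The missing idea is this: the second critical set of the approximants must be a finite \emph{gap} converging to $H^k$ (resp.\ $Q^k$). So its critical value must be a vertex of a finite gap of $\lam_n$ approximating $T^{k-1}$, whose orbit falls onto a rotational cycle approximating $T^0$ rather than onto $0$.

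The paper does exactly this.
\begin{itemize}
\item It takes the invariant $N$-gon $G_N=T^0_N$ of rotation number $\frac1N$, with vertex $x_N=\frac1{3^N-1}$; these converge to $T^0$.
\item It takes the one-to-one pullbacks $T^j_N$ of $G_N$, which converge to $T^j$.
\item The second critical set is the $2N$-gon $H^k_N$, the two-to-one pullback of $T^{k-1}_N$.
\item The first critical set is the leaf $\oc_N=\ol{y_N\,(y_N+\frac13)}$ with $y_N=\frac1{2\cdot3^N}$, which separates $T^0_N$ from $T^1_N$.
\end{itemize}
These preperiodic critical sets determine a unique dendritic post-critically finite q-lamination. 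The critical chord near $\ol{0\frac23}$ is now only a diagonal of $H^k_N$, not a leaf, so nothing obstructs the limit. That limit has $\oc_\circ$ as a leaf and $H^k$ as a gap, and uniqueness of the invariant lamination with these critical sets gives $\lam^k$.

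For $\lam^k_\square$, replace $H^k_N$ by the critical quadrilateral $Q^k_N$. This is the two-to-one pullback of the pullback of $\oc_N$ that separates $T^{k-1}_N$ from $T^{k-2}_N$.
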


Note that dendritic post-critically finite laminations are laminations
 of dendritic post-critically finite polynomials, as follows from \cite{BFH92}.

\begin{proof}
Let us introduce a series of sequences of dendritic postcritically finite laminations $\{\lam_N^k\}_{N=1}^\infty, k=1, 2, \dots$.
Firstly, for every integer $N>2$, let $G_N$ be the unique finite invariant gap with $N$ vertices located above $\ol{0\frac12}$
on which $\si_3$ acts with combinatorial rotation number $\frac1{N}$. Denote the vertex of $G_N$ closest to $0$ by $x_N$. Then
the vertex $u_N=3^{N-1}x_N\in [\frac13, \frac12]$ of $G_N$ maps to $x_N$ in one step and equals $\frac13+\frac{x_N}{3}$.
Hence $x_N=\frac{1}{3^N-1}$ and $u_N=\frac13+\frac{1}{3(3^N-1)}=\frac{3^{N-1}}{3^N-1}$.
We will also write $T^0_N$ instead of $G_N$.

Define a gap $T^1_N$ that maps onto $T^0_N=G_N$ one-to-one and has a vertex $\frac{x_N}{3}=z^1_N$ and all other vertices belonging to $[\frac13, \frac12]$.
One can visualize $T^1_N$ by thinking of pulling $G_N$ back down towards the repelling diameter $\ol{0\frac12}$. In fact the process of pulling $G_N$ back down towards
$\ol{0\frac12}$ can be repeated giving rise to gaps $\{T^k_N\}_{k=1}^\infty$ that become more and more flat and horizontal as $k\to \infty$.
By construction, $\si_3(T^k_N)=T^{k-1}_N$ so that $\si_3^k(T^k_N)=G_N$.
Each of these gaps has $N$ vertices.
See Fig. \ref{fig:T4k} for an illustration.
Observe that the unique vertex of
$T^k_N$ in $(0, x_N)$ is $z^k_N=\frac{x_N}{3^k}=\frac{1}{3^k(3^N-1)}.$ Set $\od^k_N=\ol{z^k_N\, z^k_N+\frac23}$.

\begin{figure}
  \centering
  \includegraphics[width=.7\textwidth]{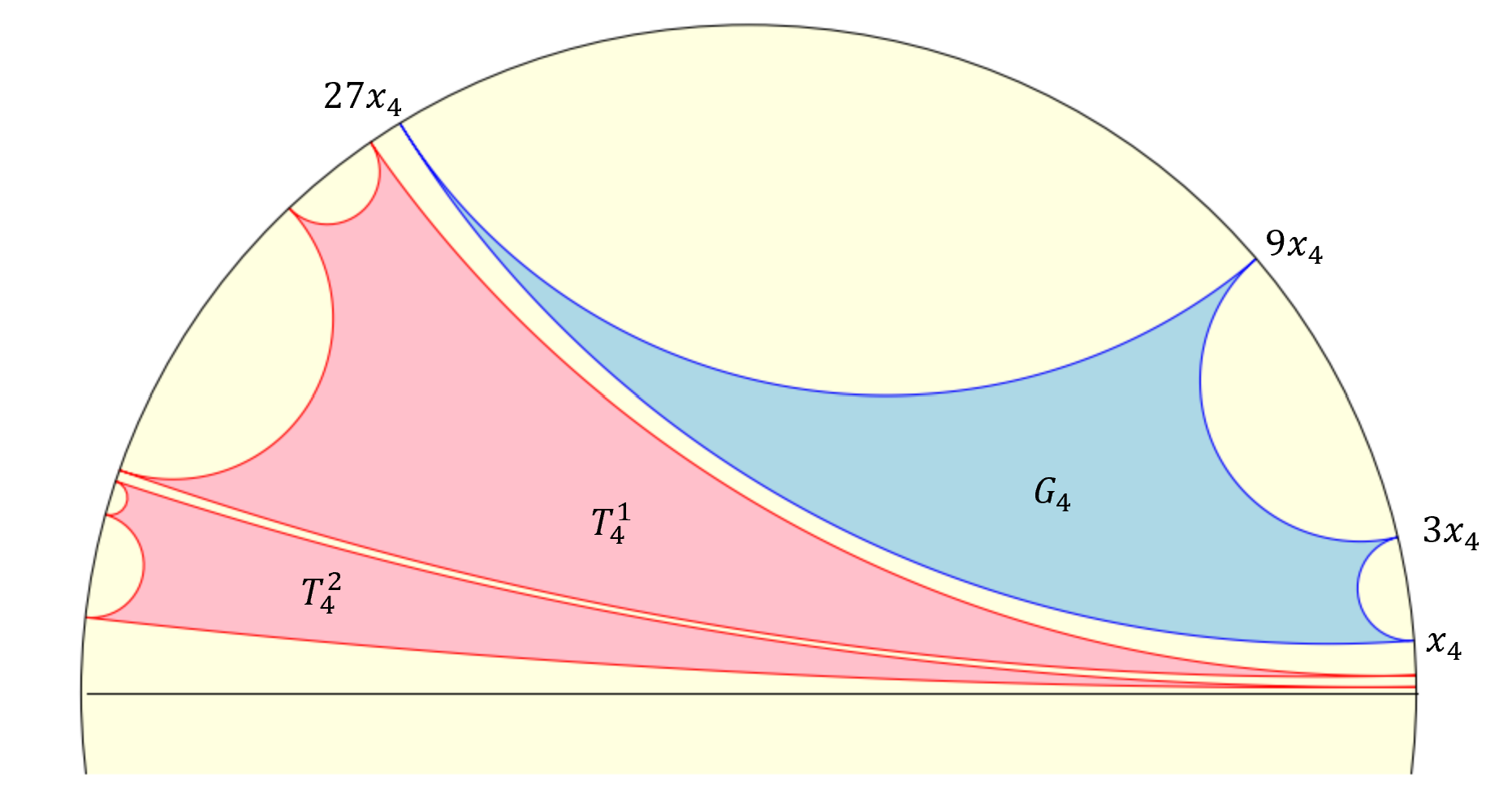}
  \caption{The invariant gap $G_4$ and its pullbacks $T^k_4$ (only $k=1$ and $k=2$ are shown).}\label{fig:T4k}
\end{figure}

Define $H^{k}_N$ as the convex hull of the full $\si_3|_{[\frac 13,1]}$-preimage of $T^{k-1}_N\cap\uc$.
Also, take $\oc_N$ to be a strictly preperiodic critical chord separating $T^0_N$ from $T^1_N$.
For the sake of definiteness we may choose $\oc_N$
so that it does not depend on $k$ and is eventually mapped to a fixed point.
Namely, choose a point $y_N=\frac{1}{2\cdot 3^N}\in (0, x_N)$ so that $\si_3^N(y_N)=\frac12$,
 and set $\oc_N=\ol{y_N\,(y_N+\frac13)}$.
The thus constructed critical portrait $\oc_N$, $\od_N^k$ is shown in Fig. \ref{fig:H41} for
 the case $N=4$ and $k=1$.
\begin{figure}
  \centering
  \includegraphics[width=.6\textwidth]{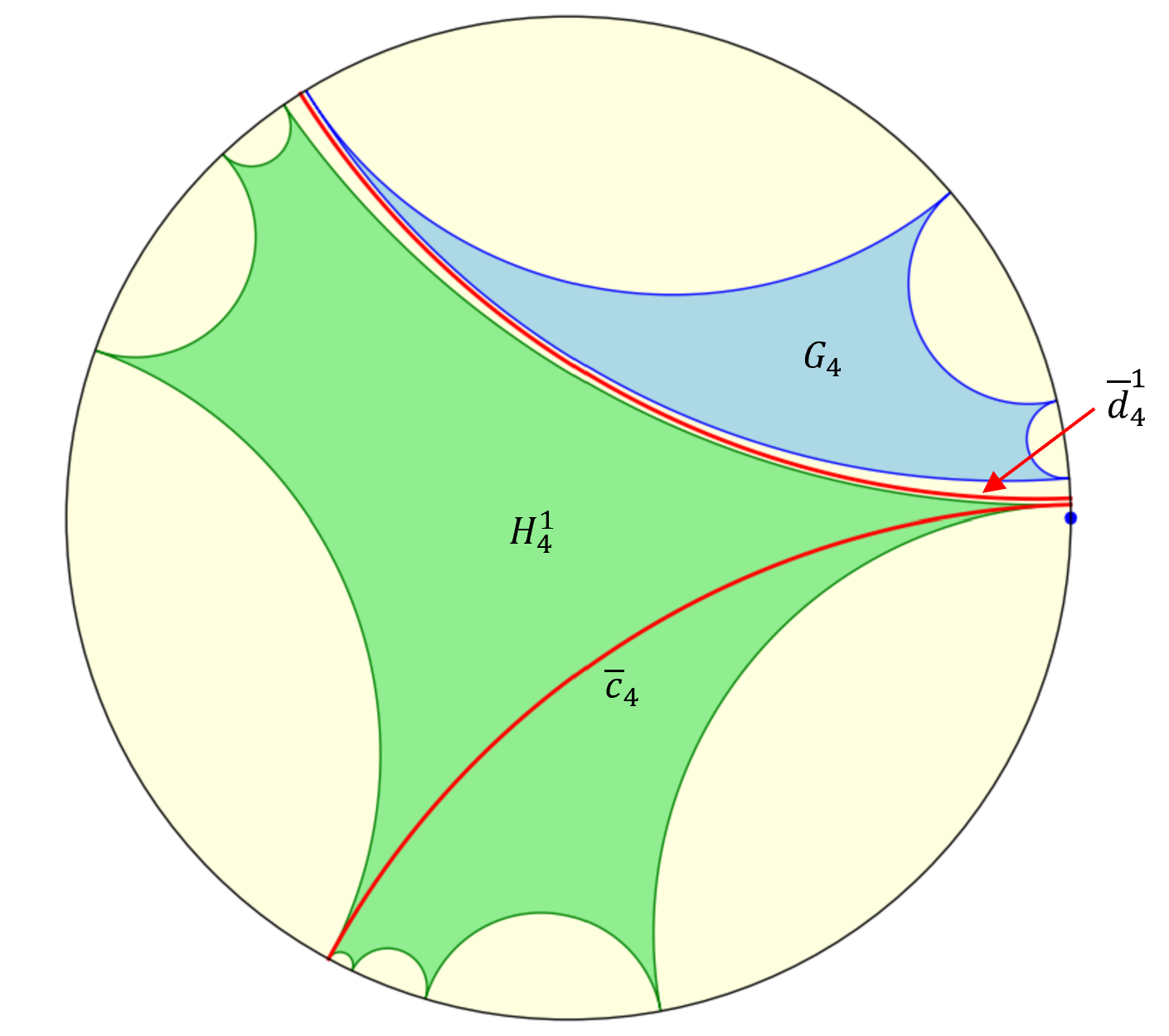}
  \caption{The gaps $G_4$ and $H^1_4$ of the lamination $\lam_4^1$.
  Also shown is a critical portrait $\{\oc_N,\od_4^1\}$ compatible with this lamination.}
  \label{fig:H41}
\end{figure}

The two finite critical sets $H^k_N$ and $\oc_N$ we have constructed are pre-periodic.
It is well-known that in such cases there exists a unique q-lamination $\lam^k_N$ with such critical sets.
To show that one can use Thurston's pullback construction \cite{thu85}
(observe that the critical sets in question have well defined pullbacks). Alternatively,
it follows from Kiwi's results \cite{kiw04}. Notice that the most general situation is considered in a recent preprint
\cite{bot26}.
Note that the lamination $\lam^k_N$ is automatically dendritic.
Clearly, the limit of $H^k_N$ coincides with $H^k$ while the limit of $\oc_N$ is $\oc_\circ:=\ol{0\frac 13}$.
It follows that %the
$\lim_{N\to\infty} \lam_N=\lam^k$, as $\lam^k$ is the unique
invariant lamination with critical sets $\oc_\circ$ and $H^k$.

Suppose that $k>1$, and set $\ell_N$ to be the one-to-one $\si_3^k$-pullback of $\oc_N$ that separates $T^{k-1}_N$ from $T^{k-2}_N$.
If $k=1$, let $\ell_N$ be an eventual pullback of $\oc_N$ ``under'' the shortest edge of $T^0_N$,
 i.e. $\ell_N$ is separated by the shortest edge of $T^0_N$ from the rest of $T^0_N$.
Define $Q^k_N$ to be the two-to-one pullback of $\ell_N$ unlinked with $\oc_N$.
Then there is a unique dendritic lamination $\lam_{N,\square}$ with critical sets $\oc_N$ and $Q^k_N$,
 and the limit of $\lam_{N,\square}$ coincides with $\lam^k_\square$ since the latter is
 the unique invariant lamination with critical sets $\oc_\circ=\lim\oc_N$ and $Q^k=\lim Q^k_N$.
\end{proof}

Theorem \ref{t:main1} and Corollary \ref{c:main1}
 follow from Proposition \ref{p:nec}. %and \ref{p:suf}.

\begin{prop}
\label{p:nec}
Assume the Basic Setup. Assume also that $D$ is a gap and not a leaf. Then $D$ must coincide with $H^k$ or $Q^{k}$,
for some positive integer $k$. All these options realize.
\end{prop}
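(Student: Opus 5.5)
The plan is to reduce Proposition \ref{p:nec} to Proposition \ref{p:limlam} and then read $D$ off the limit lamination. First we need the setting of Proposition \ref{p:limlam}, namely that the $\oc_n$ are passive.

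By the definition of passivity, after passing to a subsequence either all $\oc_n$ or all $\od_n$ are passive. Suppose the $\od_n$ are passive. Applying Lemma \ref{l:T0} after conjugating by $\tau$ (which swaps the roles of $\oc$ and $\od$), $\tau(T^0)$ is a gap of $\lam$ and $\od_\circ\in\lam$. This case is the mirror image of the case we treat. In the Introduction's normalization, where $D$ is a gap and $C=\oc_\circ$, the relevant alternative is therefore that the $\oc_n$ are passive.

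So assume the $\oc_n$ are passive and $D$ is a gap. We can then run the argument of Proposition \ref{p:limlam} directly on $D$.

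\begin{itemize}
\item Lemma \ref{l:T0} shows that $T^0$ is a gap of $\lam$.
\item The $T^i$ cannot all be gaps of $\lam$. Otherwise $\ol{0\frac12}\in\lam$, and Lemma \ref{l:12} would give $D=\od_\circ$, contradicting that $D$ is a gap.
\item Let $k\ge1$ be minimal such that $T^k$ is not a gap of $\lam$. Then $T^{k-1}$ is a gap.
\item Its pullbacks in $\lam$ lying in the big arc $[\frac13,1]$ come either as one two-to-one pullback $H^k$, or as two one-to-one pullbacks separated by a critical quadrilateral, which Proposition \ref{p:limlam} identifies as $Q^k$.
\end{itemize}

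Next we locate $D$. The set $D$ is a critical set of $\lam$ that contains $\od_\circ=\ol{0\frac23}$, since $D_n\to D$ and $\od_n\subset D_n$. It must therefore be the critical gap (or leaf) of $\lam$ that meets $\ol{0\frac23}$. In the first case this is $H^k$. In the second case the only critical object containing $\ol{0\frac23}$ is $Q^k$ or its diagonal $\ol{0\frac23}$, and the diagonal is excluded because $D$ is a gap.

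One point needs care: $D$ could a priori be strictly smaller than the critical gap of $\lam$, because limits of critical sets can be subsets of gaps of the limit. We must show that $D$ equals $H^k$ or $Q^k$ and is not, say, a sub-polygon. For this we use that $D$ is a limit of critical sets $D_n$ of q-laminations. Hence $\si_3(D)$ is a limit of the finite or Siegel sets $\si_3(D_n)$, and $\si_3(D)$ must be a gap, leaf or point of $\lam$ compatible with the invariant structure.

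Since $D$ contains $0$ and $\frac23$, $\si_3(D)$ contains $0$. The gaps and leaves of $\lam^k$ and $\lam^k_\square$ that contain $0$ are $T^0,\dots,T^{k-1}$, their edges, and $0$ itself. By Lemma \ref{l:only}, every leaf eventually maps to $0$ or to $\ol{0\frac12}$. Combining this with the convexity of $D$ and the rigidity of the periodic gap $T^0$ (Lemma \ref{l:rigid}), I would argue that $\si_3(D)$ must be $T^{k-1}$ or its bottom edge, which gives $D=H^k$ or $D=Q^k$. \textbf{This identification, namely that $D$ fills the whole critical gap, is the step I expect to be the main obstacle.} I would try to handle it via the sibling property: the $D_n$ are full critical sets of $\lam_n$, so their images are full gaps of $\lam_n$, and these converge into full gaps of $\lam$.

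Realization follows from Proposition \ref{p:limlam1}. The dendritic laminations $\lam^k_N$, with critical sets $\oc_N\to\oc_\circ$ and $H^k_N\to H^k$, give $D=H^k$. The laminations $\lam_{N,\square}$ with critical sets $Q^k_N\to Q^k$ give $D=Q^k$. In both constructions the critical portraits $\{\oc_N,\od^k_N\}$ converge to $\{\oc_\circ,\od_\circ\}$, because $z^k_N\to0$, so the Basic Setup is satisfied.
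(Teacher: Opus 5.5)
Your route is the paper's: make the $\oc_n$ passive, apply Proposition~\ref{p:limlam}, read off $D$, and realize the options via Proposition~\ref{p:limlam1}. However, two steps are not actually established.

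\emph{The reduction to passive $\oc_n$.} This is not a legitimate symmetry argument. The reflection $\tau$ interchanges $C$ and $D$, so it turns the hypothesis ``$D$ is a gap'' into ``$C$ is a gap''. The mirror image of the case you treat is therefore ``$\od_n$ passive and $C$ a gap''. The case ``$\od_n$ passive and $D$ a gap'' is not covered by symmetry and has to be excluded. Appealing to the Introduction's normalization $C=\oc_\circ$ does not help: it is not a hypothesis of the proposition, and it would not by itself make the $\oc_n$ passive. The paper excludes this case with Lemma~\ref{l:T0} applied to the $\od_n$. Passivity of the $\od_n$ would make $\tau(T^0)$ a gap of $\lam$ with edge $\od_\circ$. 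That is the mirror of the configuration in Proposition~\ref{p:limlam}, where the passive chord's limit is a bare leaf, and the paper treats it as incompatible with $D$ being a gap. So the $\od_n$ are not passive and the $\oc_n$ are. You derived exactly this fact about $\tau(T^0)$, but used it as a symmetry rather than as a contradiction.

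\emph{The identification of $D$.} You single this out as the main obstacle and leave it open, and your proposed detour through $\si_3(D)$ would not close it. Knowing $\si_3(D)=T^{k-1}$ does not determine $D$: for instance $\ch(T^k\cup\{\frac23\})\subsetneq H^k$ also contains $\ol{0\frac23}$ and maps onto $T^{k-1}$.

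What is missing is the observation that $D$ is itself a gap of $\lam$, not merely a subset of one.
\begin{itemize}
\item Since $D_n\to D$ in the Hausdorff metric and $D$ has interior, each hole of $D$ is approximated by holes of $D_n$. So every edge of $D$ is a limit of edges of $D_n$, and hence is a leaf of $\lam$.
\item No leaf of $\lam$ meets the interior of $D$, since the approximating leaves of $\lam_n$ would then meet the interior of $D_n$.
\item $\ol{0\frac23}\subset D$ is a diagonal of $H^k$ (in $\lam^k$) or of $Q^k$ (in $\lam^k_\square$). So $D$ meets the interior of that gap, and since the gap's edges cannot cross $D$, we get $D$ lies inside it.
\item The edges of $D$ are leaves of $\lam$, so they cannot be diagonals of that gap. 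Hence $D=H^k$ or $D=Q^k$.
\end{itemize}
This is what the paper's one-line identification ``$D$ is the critical set of $\lam$ distinct from $\ol{0\frac 13}$'' silently relies on.

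The realization part of your proposal matches the paper's.
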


\begin{proof}
By Lemma \ref{l:T0} applied to $\od_n$, the critical chords $\od_n$ are not passive, hence $\oc_n$ are.
Proposition \ref{p:limlam} is then applicable.
Since $D$ is the critical set of $\lam$ distinct from $\ol{0\frac 13}$,
 the only options for $D$ are $H^k$ or $Q^k$, for some $k$
 (the first option realizes when $\lam=\lam^k$ while the second option corresponds to $\lam=\lam^k_\square$).
The last claim of the proposition follows from Proposition \ref{p:limlam1}.
\end{proof}

Recall that $\Delta'$ is the full $\si_3|_{[\frac 13,1]}$-preimage of $\ol{(T^0\cup T^1\cup\dots)}\cap\uc$,
and $\Delta=\ch(\Delta')$.
We have shown that in any USC partition of the space of critical portraits, respecting the partition of the space of Side critical portraits,
all critical portraits in $\Delta$ or in $\tau(\Delta)$ must be contained in one class.  Let us now address the question as
to which q-laminations these critical portraits correspond to. All of them have one critical chord with a fixed endpoint, called  $\oc$,
and a second critical chord $\od$. We may assume that $\oc=\ol{0\frac13}$. Then the second critical chord $\od$ has a finite forward orbit
contained in the closure of the component of
$\disk\setminus \od$ that contains $0$.  Assume first that $\si_3(\od)\not\in \{0,\frac12\}$. Then  $\od$ is the major (the longest edge) of a quadratic
invariant gap $Q$ that contains $\oc$;  such gaps were studied  in \cite{bopt16b}. It was shown in this paper that, given $\oc$ and $\od$ as above,
there exists a unique q-lamination that contains $Q$ as an invariant quadratic gap.

Consider the remaining cases. First assume that $\si_3(\od)=0$. This can happen in two cases. First, $\od$ may be equal to $\ol{\frac13 \frac23}$. In this case
by \cite{bopt16b} we have a quadratic invariant gap $Q$ that contains $\oc$ and a unique q-lamination with $Q$ as its invariant gap.
On the other hand, if $\od=\ol{0 \frac23}$, then the corresponding q-lamination is the empty lamination. Finally, if $\si_3(\od)=\frac12$,
then the corresponding q-lamination contains two quadratic invariant
gaps that share the fixed leaf $\ol{0\frac12}$ on their boundaries. One of the gaps is located above $\ol{0\frac12}$ and the other one is
located below $\ol{0\frac12}$.

\bibliographystyle{amsalpha}

\end{document}